	\def\MR#1{}
\newcommand{\Bl}{{\mathcal{B}\ell}}
\newcommand{\RR}{\mathbb{R}}
\newcommand{\kk}{\mathbb{k}}
\newcommand{\sP}{\mathscr{P}}
\newcommand{\sR}{\mathscr{R}}
\newcommand{\UU}{\mathbb{U}}
\newcommand{\NN}{\normalfont\mathbb{N}}
\newcommand{\ZZ}{\mathbb{Z}}
\newcommand{\PP}{{\normalfont\mathbb{P}}}
\newcommand{\mm}{{\normalfont\mathfrak{m}}}
\newcommand{\pp}{{\normalfont\mathfrak{p}}}
\newcommand{\bn}{{\normalfont\mathbf{n}}}
\newcommand{\ttt}{{\normalfont\mathbf{t}}}
\newcommand{\rank}{\normalfont\text{rank}}
\newcommand{\Sym}{\normalfont\text{Sym}}
\newcommand{\Rees}{\mathscr{R}}
\newcommand{\ee}{{\normalfont\mathbf{e}}}
\newcommand{\EE}{\mathcal{E}}
\newcommand{\OO}{\mathcal{O}}
\newcommand{\sX}{\mathscr{X}}
\newcommand{\sY}{\mathscr{Y}}
\newcommand{\sK}{\mathscr{K}}
\newcommand{\FF}{\mathcal{F}}
\newcommand{\HH}{\normalfont\text{H}}
\newcommand{\AAA}{\mathbb{A}}
\newcommand{\Proj}{\normalfont\text{Proj}}
\newcommand{\Spec}{\normalfont\text{Spec}}
\newcommand{\multProj}{\normalfont\text{MultiProj}}
\def\f0{\mathbf{0}}
\def\1{\mathbf{1}}
\newtheorem{theorem}{Theorem}[section]
\newtheorem{headthm}{Theorem}
\newaliascnt{headcor}{headthm}
\newaliascnt{headconj}{headthm}
\newaliascnt{corollary}{theorem}
\newtheorem{corollary}[corollary]{Corollary}
\newaliascnt{claim}{theorem}
\newaliascnt{lemma}{theorem}
\newtheorem{lemma}[lemma]{Lemma}
\newaliascnt{conjecture}{theorem}
\newaliascnt{proposition}{theorem}
\newtheorem{proposition}[proposition]{Proposition}
\theoremstyle{definition}
\newaliascnt{definition}{theorem}
\newtheorem{definition}[definition]{Definition}
\newaliascnt{notation}{theorem}
\newtheorem{notation}[notation]{Notation}
\newaliascnt{example}{theorem}
\newtheorem{example}[example]{Example}
\newaliascnt{examples}{theorem}
\newaliascnt{remark}{theorem}
\newtheorem{remark}[remark]{Remark}
\newaliascnt{question}{theorem}
\newaliascnt{questions}{theorem}
\newaliascnt{problem}{theorem}
\newaliascnt{construction}{theorem}
\newaliascnt{setup}{theorem}
\newtheorem{setup}[setup]{Setup}
\newaliascnt{setupdef}{theorem}
\newaliascnt{algorithm}{theorem}
\newaliascnt{observation}{theorem}
\newaliascnt{defprop}{theorem}
\newtheorem{chunk}[definition]{}
\def\equationautorefname~#1\null{(#1)\null}
\def\sectionautorefname~#1\null{Section #1\null}
\def\subsectionautorefname~#1\null{\S #1\null}
\def\surjects{\twoheadrightarrow}
\DeclareFontFamily{OT1}{pzc}{}
\DeclareFontShape{OT1}{pzc}{m}{it}{<-> s * [1.100] pzcmi7t}{}
\DeclareMathAlphabet{\mathchanc}{OT1}{pzc}{m}{it}
\DeclareMathOperator{\fSpec}{\mathchanc{Spec}}
\DeclareMathOperator{\fProj}{\mathchanc{Proj}}
\DeclareMathOperator{\fMProj}{\mathchanc{MultiProj}}
\title{Mixed Segre zeta functions and their log-concavity}
\author{Yairon Cid-Ruiz}
\address{Department of Mathematics, North Carolina State University, Raleigh, NC, USA}
\email{ycidrui@ncsu.edu}
\date{\today}
\keywords{mixed Segre classes, mixed Segre zeta functions, homogeneous ideals, rationality, Lorentzian polynomials, blow-ups, Chern classes, Segre classes, vector bundles}
\subjclass[2020]{14C15, 14C17, 13H15, 52B40}
\begin{document}

\begin{abstract}
	We introduce and study the \emph{mixed Segre zeta function} of a sequence of homogeneous ideals in a polynomial ring.
	This function is a power series encoding information about the mixed Segre classes obtained by extending the ideals to projective spaces of arbitrarily large dimension.
	Our work generalizes and unifies results by Kleiman and Thorup on mixed Segre classes and by Aluffi on Segre zeta functions.
	We prove that this power series is rational, with poles corresponding to the degrees of the generators of the ideals.
	We also show that the mixed Segre zeta function only depends on the integral closure of the ideals. 
	Finally, we prove that the homogenization of the numerator of a modification of the mixed Segre zeta function is denormalized Lorentzian in the sense of Br\"and\'en and Huh.
\end{abstract}

	\maketitle
	
\section{Introduction}

 Let $\kk$ be an arbitrary field, $R = \kk[x_0,\ldots,x_n]$ be a polynomial ring,  and $I_1, \ldots, I_m \subset R$ be homogeneous ideals in $R$.
Denote by $Z_1,\ldots,Z_m \subset \PP_\kk^n$  the closed subschemes of $\PP_\kk^n$ defined by the ideals $I_1,\ldots,I_m \subset R$ and by $Z \subset \PP_\kk^n$ the closed subscheme of $\PP_\kk^n$ defined by the product ideal $I_1\cdots I_m \subset R$.
In their foundational study of \emph{mixed Buchsbaum-Rim multiplicities}, Kleiman and Thorup \cite{KLEIMAN_THORUP_MIXED} introduced the notion of \emph{mixed Segre classes}.
These classes yield a mixed generalization of the classical \emph{Segre classes}, which play a key role in Fulton--MacPherson’s intersection theory.
For each $i_1,\ldots,i_m \in \NN$, we have the \emph{mixed Segre class} 
$$
s^{i_1,\ldots,i_m}(Z_1, \ldots,Z_m; \PP_\kk^n) \;\in\; A_{n-i_1-\cdots-i_m}(Z)
$$ 
of $Z_1,\ldots,Z_m$ in $\PP_\kk^n$ of type $(i_1,\ldots,i_m)$.
For more details, see \autoref{def_KT_transform}.
%Let $\iota : Z \hookrightarrow \PP_\kk^n$ be the natural closed immersion and consider the push-forward $\iota_*(s\left(Z_1,\ldots,Z_m;\PP_\kk^n\right))$  to $\PP_\kk^n$ of the total mixed Segre class.

\smallskip

Motivated by the \emph{Segre zeta function} of Aluffi \cite{aluffi2017segre}, we introduce the \emph{mixed Segre zeta function}.
The mixed Segre zeta function of the ideals $I_1,\ldots,I_m$ is a formal power series with integer coefficients 
$$
\zeta_{I_1, \ldots, I_m}(t_1,\ldots,t_m) \;=\; \sum_{i_1,\ldots,i_m \in \NN} \, a_{i_1,\ldots,i_m} \, t_1^{i_1}\cdots t_m^{i_m} \;\in\; \ZZ[\![t_1,\ldots,t_m]\!].
$$
This power series is characterized by the fact that, for all $N \ge n$ and $i_1+\cdots+i_m \le N$, the class 
$$
a_{i_1,\ldots,i_m} H^{i_1+\cdots + i_m} \smallfrown \left[\PP_\kk^N\right] \;\in\; A_{N-i_1-\cdots-i_m}\left(\PP_\kk^N\right)
$$
equals the push-forward to $\PP_\kk^N$ of the mixed Segre class $s^{i_1,\ldots,i_m}(Z_1^N,\ldots, Z_m^N; \PP_\kk^N)$; 
here $H$ denotes the class of a hyperplane in $\PP_\kk^N$ and $Z_i^N \subset \PP_\kk^N$ is the closed subscheme of $\PP_\kk^N$ defined by the extension of the ideal $I_i \subset R$  to the polynomial ring $\kk[x_0,\ldots,x_n,x_{n+1},\ldots,x_N] \supseteq R = \kk[x_0,\ldots,x_n]$.
Therefore, the mixed Segre zeta function $\zeta_{I_1, \ldots, I_m}(t_1,\ldots,t_m)$ can be viewed as the generating function that encodes the push-forwards of the corresponding mixed Segre classes obtained after extending the ideals $I_1,\ldots,I_m$ to arbitrarily large polynomial rings. 
Since mixed Segre classes can be seen as a refinement of Segre classes, mixed Segre zeta functions can likewise be regarded as a refinement of Segre zeta functions.
It is not difficult to show the existence of the mixed Segre zeta function (see \autoref{lem_existence}).
Our first main result shows that the mixed Segre zeta function is a \emph{rational} power series.

\begin{headthm}[{\autoref{thm_rationality}}]
	\label{thmA}
	Let $I_1, \ldots, I_m \subset R$ be homogeneous ideals in $R = \kk[x_0,\ldots,x_n]$.
	For each $1 \le i \le m$, let $f_{i, 0}, f_{i, 1}, \dotsc, f_{i, r_i} \in R$ be homogeneous generators of $I_i$, with the degree of $f_{i, j}$ equal to $d_{i,j} = \deg\left(f_{i,j}\right)$.
	Then we can write 
	$$
	\zeta_{I_1, \ldots, I_m}(t_1,\ldots,t_m) \;\;=\;\; \frac{P(t_1,\ldots,t_m)}{\;\prod_{1 \le i \le m} \left(1+d_{i,0}t_i\right)\left(1+d_{i,1}t_i\right)\cdots\left(1+d_{i,r_i}t_i\right)\;}
	$$
	where $P(t_1,\ldots,t_m) \in \NN[t_1,\ldots,t_m]$ is a polynomial with nonnegative integer coefficients.
\end{headthm}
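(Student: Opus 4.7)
My plan is to express the coefficients $a_{i_1,\ldots,i_m}$ as intersection numbers on a joint blow-up of $\PP_\kk^N$ (for $N \gg 0$), and then to assemble them into a rational generating function by means of the projective-bundle pushforward formula. By the Kleiman--Thorup description of mixed Segre classes, for each sufficiently large $N$ there is a birational morphism $\pi \colon X \to \PP_\kk^N$ on which each extended ideal sheaf becomes invertible of the form $\OO_X(-E_i)$ for an effective Cartier divisor $E_i$. Moreover, $a_{i_1,\ldots,i_m}$ equals, up to standard signs, the intersection number on $X$ of $E_1^{i_1}\cdots E_m^{i_m}\cdot \pi^*H^{N - i_1 - \cdots - i_m}$. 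This lets me encode the whole series $\zeta_{I_1,\ldots,I_m}$ as a generating function of such intersection numbers on $X$.

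Next, the tuple $(f_{i,0},\ldots,f_{i,r_i})$ is a section of the locally free sheaf $F_i^\vee := \bigoplus_{j=0}^{r_i}\OO_{\PP^N}(d_{i,j})$ whose zero scheme is $Z_i^N$; after pulling back to $X$, this section extends to a morphism $\phi_i \colon X \to \PP(F_i)$ over $\PP^N$, under which $E_i$ is identified (up to pullback terms from $\PP^N$) with the tautological relative hyperplane class $\xi_i$ on $\PP(F_i)$. Taking fiber products over $\PP^N$, these $\phi_i$ assemble into a single morphism
\[
\Phi \colon X \longrightarrow \mathcal{P} \;:=\; \PP(F_1)\times_{\PP^N}\cdots\times_{\PP^N}\PP(F_m),
\]
that factors $\pi$ as $\pi = p\circ \Phi$, where $p \colon \mathcal{P}\to \PP^N$ is the structure morphism.

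Pushing the generating series forward first along $\Phi$ and then along $p$, the projective-bundle pushforward formula reduces the computation to one involving the total Chern classes $c(F_i^\vee) = \prod_{j=0}^{r_i}(1+d_{i,j}H)$. Reorganising the resulting sum of monomials as a formal power series in variables $t_i$ corresponding to the $\xi_i$, the denominator of $\zeta_{I_1,\ldots,I_m}(t_1,\ldots,t_m)$ is seen to be exactly $\prod_{i,j}(1+d_{i,j}t_i)$, and the numerator $P$ lies in $\ZZ[t_1,\ldots,t_m]$; this establishes the rational form.

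The nonnegativity of the coefficients of $P$ is the main obstacle, and in my view the substantive part of the theorem. The strategy, modelled on Aluffi's single-ideal argument, is to identify $P$ with the multidegree of the effective pushforward class $\Phi_*[X]$ in the Chow ring of the smooth projective variety $\mathcal{P}$, expanded in the monomial basis provided by the globally generated classes $H, \xi_1,\ldots,\xi_m$. Since $\Phi_*[X]$ is an effective cycle and all of these classes are nef, each such multidegree is automatically a nonnegative integer. The delicate step will be to match the expansion coming from the pushforward computation above with this multidegree interpretation, and to verify that the construction is stable as $N$ grows, so that $P$ is in fact independent of the auxiliary choice of $N$.
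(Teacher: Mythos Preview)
Your rationality argument is sound and amounts to the direct route the paper sketches in the Remark following \autoref{cor_computing}: embed the joint blow-up $\mathcal{B}^N$ into $\PP(\underline{\EE}^N)$, and observe that the projective degrees $d_{i_1,\ldots,i_m}(\Phi)$ stabilize as $N$ grows, so the rational form of $\zeta$ drops out of the projective-bundle push-forward formula. The paper's actual proof takes a longer detour through the functoriality machinery of \autoref{sect_funct_Segre} (the ``shadow'' class $L_\ttt$, the join operation, and \autoref{thm_pullback_Seg} on pull-backs under projections), culminating in the truncation identity of \autoref{lem_truncation}. Your route is more economical for the theorem at hand; the paper's approach buys the independent functoriality statement \autoref{thm_pullback_Seg}.

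Your nonnegativity argument, however, has a genuine gap. You want to identify $P$ with the multidegrees of the effective class $\Phi_*[X]$ against the nef classes $H,\xi_1,\ldots,\xi_m$. But those multidegrees---the projective degrees $d_{i_1,\ldots,i_m}(\Phi)$---assemble into the numerator $Q$ of $1-\zeta$, not the numerator $P$ of $\zeta$ itself: compare \autoref{cor_computing} and the Remark following it, and note $P+Q=\prod_{i,j}(1+d_{i,j}t_i)$, so nonnegativity of one does not give nonnegativity of the other. (Quick sanity check: for $I=(x_0,x_1)$ the multidegrees are $(1,1)$, whence $Q=1+2t$, whereas $P=t^2$.) The paper's nonnegativity proof (\autoref{lem_nonnegative}) is more delicate: it uses the telescoping expression for $s(\underline{Z};X)$ in \autoref{rem_seg_fulton} to rewrite $c_\ttt(\underline{\EE})\smallfrown\iota_*(s)$ as a sum of push-forwards of Chern classes of the globally generated quotient bundles $\mathcal{Q}_k=q^*(\EE_k)/\OO_{\PP(\underline{\EE})}(-\ee_k)$ against the effective classes $[E_k]$, and then invokes Fulton's positivity for globally generated bundles. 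That is also how Aluffi handles the single-ideal case, so ``modelled on Aluffi's argument'' does not describe the multidegree approach you outlined.
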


Our proof of \autoref{thmA} is inspired by Aluffi's proof of the rationality of Segre zeta functions \cite{aluffi2017segre}.
The main technical tool is the analysis of how mixed Segre classes pull-back under certain rational maps (see \autoref{thm_pullback_Seg}).
This analysis, in turn, relies heavily on a blow-up formula for mixed Segre classes established in \autoref{thm_blow_up_form}.

\smallskip

We show that mixed Segre zeta functions have several interesting properties.  
Due to \autoref{lem_truncation}, we obtain that the mixed Segre zeta function $\zeta_{I_1,\ldots,I_m}(t_1,\ldots,t_m)$ is determined by the degrees of the generators of the ideals $I_1,\ldots,I_m$ and the mixed Segre classes $s^{i_1,\ldots,i_m}(Z_1^N,\ldots, Z_m^N; \PP_\kk^N)$ for $N$ high enough (explicitly, for $N \ge m + \sum_{i=1}^{m}r_i$).
It turns out that in general not all the numbers $-1/\deg(f_{i,j})$ appear as poles of $\zeta_{I_1,\ldots,I_m}(t_1,\ldots,t_m)$ (even if each set $f_{i, 0}, f_{i, 1}, \dotsc, f_{i, r_i}$ gives a minimal set of generators of each ideal $I_i$).
Indeed, by \autoref{prop_integral_dep}, mixed Segre zeta functions only depend on the integral closure of the ideals.
More precisely, we have the equality 
$$
\zeta_{I_1, \ldots, I_m}(t_1,\ldots,t_m) \;=\;  \zeta_{\overline{I_1}, \ldots, \overline{I_m}}(t_1,\ldots,t_m),
$$
where $\overline{I_i} \subset R$ denotes the integral closure of $I_i$.
In \autoref{prop_mixed_formula}, we provide a mixed formula that expresses the Segre zeta function $\zeta_{I_1 \cdots I_m}(t)$ of the product ideal $I_1\cdots I_m \subset R$ in terms of the mixed Segre zeta function $\zeta_{I_1, \ldots, I_m}(t_1,\ldots,t_m)$.
This mixed formula leads to a number of explicit formulas for the Segre zeta function of certain ideals (see \autoref{examp_binom_conv}, \autoref{exmp_r_1_s_2} and \autoref{examp_explicit}).

\smallskip

The second part of this paper investigates the log-concave behavior exhibited by mixed Segre zeta functions. 
Our main result in this direction is stated in the following theorem. 

\begin{headthm}[{\autoref{thm_Lorentzian}}]
	\label{thmB}
	Assume that $\kk$ is an algebraically closed field.  
	Let $I_1,\ldots,I_m \subset R$ be homogeneous ideals in $R = \kk[x_0,\ldots,x_n]$.
	Let $f_{i, 0}, f_{i, 1}, \dotsc, f_{i, r_i} \in R$ be homogeneous generators of $I_i$ with $d_{i,j} = \deg\left(f_{i,j}\right)$.
	Define the polynomial $Q(t_1,\ldots,t_m)$ by the identity
	$$
	1 - \zeta_{I_1, \ldots, I_m}(t_1,\ldots,t_m) \;=\; \frac{Q(t_1,\ldots,t_m)}{\;\prod_{1 \le i \le m} \left(1+d_{i,0}t_i\right)\left(1+d_{i,1}t_i\right)\cdots\left(1+d_{i,r_i}t_i\right)\;}.
	$$
	Then the homogenization of $Q(t_1,\ldots,t_m)$ is a denormalized Lorentzian polynomial.
\end{headthm}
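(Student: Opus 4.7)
The plan is to realize the homogenization $\widetilde{Q}(t_0,t_1,\ldots,t_m)$ as the image, under a nonnegative-linear specialization of variables, of an intersection polynomial of nef divisor classes on a smooth projective variety, and then invoke the theorem of Brändén--Huh that volume polynomials of nef divisor classes on smooth projective varieties are denormalized Lorentzian, together with the closure of the denormalized Lorentzian cone under nonnegative-linear substitutions.

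First I would use \autoref{lem_truncation} to fix $N\gg 0$ so that $Q$ is entirely determined by the mixed Segre classes of the extensions $Z_i^N\subset\PP_\kk^N$. Next, I would work on a smooth projective variety $\widetilde{X}$ dominating the joint blow-up of $\PP_\kk^N$ along $Z_1^N,\ldots,Z_m^N$; existence is granted by the hypothesis that $\kk$ is algebraically closed (Hironaka in characteristic zero, de Jong's alterations in positive characteristic if necessary). Let $\pi\colon\widetilde{X}\to\PP_\kk^N$ be the structure morphism, $h=\pi^{\ast}H$ the pullback of the hyperplane class, and $E_i$ the total-transform exceptional divisor above $Z_i^N$. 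For $N$ large, the strict transform $\widetilde{D}_{i,j}$ of the hypersurface $V(f_{i,j})\subset\PP_\kk^N$ has class $d_{i,j}h-E_i$ on $\widetilde{X}$ and is base-point free, hence nef; so is $h$.

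Second, using the blow-up formula \autoref{thm_blow_up_form}, I would express the coefficients of $\zeta_{I_1,\ldots,I_m}(t_1,\ldots,t_m)$ as push-forwards of intersection products $\pi_{\ast}\bigl(E_1^{a_1}\cdots E_m^{a_m}\bigr)$. Multiplying the defining identity $(1-\zeta_{I_1,\ldots,I_m})\prod_{i,j}(1+d_{i,j}t_i)=Q$ through and substituting $E_i = d_{i,j}h - \widetilde{D}_{i,j}$ to eliminate the (generally non-nef) classes $E_i$ in favor of the nef classes $\widetilde{D}_{i,j}$, one realizes $\widetilde{Q}(t_0,t_1,\ldots,t_m)$ as the image, under the nonnegative-linear substitution $y_0\mapsto t_0,\ y_{i,j}\mapsto t_i$ (collapsing the generators of each $I_i$), of an intersection polynomial on $\widetilde{X}$ of the form
\[
\Bigl(y_0\,h + \sum_{i,j} y_{i,j}\,\widetilde{D}_{i,j}\Bigr)^{d}\cdot h^{\,\dim\widetilde{X}-d},
\]
where $d=\deg\widetilde{Q}$ and the factor $h^{\dim\widetilde{X}-d}$ bridges the discrepancy between $\dim\widetilde{X}$ and $d$. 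By Brändén--Huh, the full volume polynomial of the $y$-linear combination of $h$ and the $\widetilde{D}_{i,j}$ (with one extra variable for $h$ to absorb the factor $h^{\dim\widetilde{X}-d}$) is denormalized Lorentzian in its variables, and capping with the nef class $h$ together with the final nonnegative-linear substitution $y_{i,j}\mapsto t_i$ preserves this property, yielding the denormalized Lorentzianness of $\widetilde{Q}$.

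The main obstacle is the precise identification in the second step: carefully performing the sign and combinatorial bookkeeping so that the series $\frac{E_i}{1+E_i t_i}$ coming from the blow-up formula, after the substitution $E_i=d_{i,j}h-\widetilde{D}_{i,j}$, is packaged exactly as a specialization of the displayed volume-type polynomial on $\widetilde{X}$, and verifying that the resulting coefficients are intersection numbers of the nef classes $h$ and $\widetilde{D}_{i,j}$ (so that Brändén--Huh applies). A secondary difficulty is matching the degree of $\widetilde{Q}$, which is at most $m+\sum r_i$ and typically strictly smaller than $\dim\widetilde{X}$, with a genuine volume polynomial; the cap with the appropriate power of $h$ is what makes this work.
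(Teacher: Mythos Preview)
Your overall strategy is correct and coincides with the paper's: express $Q$ through globally generated classes on the joint blow-up and then invoke Br\"and\'en--Huh. But the identification in your second step has a genuine gap. The substitution ``$E_i = d_{i,j}h - \widetilde{D}_{i,j}$'' is ill-posed (there are $r_i+1$ choices of $j$, and when the $d_{i,j}$ vary the classes $\widetilde{D}_{i,j}$ are distinct), and your displayed volume polynomial does \emph{not} specialize to $\widetilde{Q}$, nor to any scalar multiple of $N(\widetilde{Q})$, under $y_{i,j}\mapsto t_i$. Take $m=1$, $r_1=2$, all $d_{1,j}=d$: your formula collapses to $\int_{\mathcal{B}}(t_0 h + 3t_1(dh-E_1))^2\,h^{n-2}=t_0^2+6d\,t_0t_1+9(d^2+e)\,t_1^2$ with $e=\int h^{n-2}E_1^2$, whereas the actual $Q$ is $1+3d\,t_1+(3d^2+e)\,t_1^2$; the $t_1^2$-coefficients $9(d^2+e)$ and $3d^2+e$ differ for generic $e$. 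The underlying reason is that the blow-up formula (\autoref{thm_blow_up_form}) gives
\[
c_\ttt(\underline{\EE})\,\smallfrown\,\big([\PP_\kk^n]-\iota_*s(\underline{Z};\PP_\kk^n)\big)\;=\;b_*\Big(\prod_i c_{t_i}(\mathcal{Q}_i)\,\smallfrown\,[\mathcal{B}]\Big),
\]
where $\mathcal{Q}_i=b^*\EE_i/\OO_{\mathcal{B}}(-\ee_i)$ is a globally generated bundle of rank $r_i$; the higher Chern classes $c_k(\mathcal{Q}_i)$ are not monomials in divisor classes on $\mathcal{B}$ and cannot be captured by any power $(\sum_j\widetilde{D}_{i,j})^k$.

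The missing idea, carried out in \autoref{prop_Lorentzian}, is to linearize these Chern classes by passing to a larger space. One picks surjections $\OO_{\mathcal{B}}^{\ell_i+1}\twoheadrightarrow\mathcal{Q}_i$ with kernels $\mathcal{K}_i$, forms the irreducible subvariety $\mathscr{K}=\PP(\mathcal{K}_1)\times_{\mathcal{B}}\cdots\times_{\mathcal{B}}\PP(\mathcal{K}_m)$ inside $\mathcal{B}\times_\kk\prod_i\PP_\kk^{\ell_i}$, and uses that $\mathscr{K}$ is the zero-scheme of a regular section of $\bigoplus_i p^*\mathcal{Q}_i\otimes\OO(\ee_i)$, so that $[\mathscr{K}]$ encodes precisely the $c_k(\mathcal{Q}_i)$ against powers of the hyperplane classes. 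Pushing the image forward to $\PP_\kk^n\times_\kk\prod_i\PP_\kk^{\ell_i}$ and taking its volume polynomial in the hyperplane classes gives a Lorentzian polynomial whose suitable partial derivatives recover $N(\widetilde{Q})$. Two smaller points: no resolution or alteration is needed --- the argument runs on the possibly singular $\mathcal{B}$ and ends in a product of projective spaces, so Hironaka/de~Jong are irrelevant (and an alteration would insert an unwanted degree factor); and your nefness claim for $d_{i,j}h-E_i$ is already unclear when the $d_{i,j}$ vary, since for the minimal $d_{i,j}$ the linear system $(I_i)_{d_{i,j}}$ need not be base-point free on $\mathcal{B}$.
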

	
This result yields a systematic method to produce Lorentzian polynomials from arbitrary homogeneous ideals in a polynomial ring. 	
It should be mentioned that another paper of Aluffi \cite{aluffi2024lorentzian} shows that the homogenization of the numerator of the power series $1 - \zeta_{I}(t_1,\ldots,t_m)$ associated to a  multihomogeneous ideal $I$ is a \emph{covolume polynomial} (a dual notion to that of a \emph{volume polynomial}; see \cite[\S 4.2]{BrandenHuh});
in this latter work, $I$ is a multihomogeneous ideal defining a closed subscheme of a product of projective spaces $\PP_\kk^{\ell_1} \times_\kk \cdots \times_\kk \PP_\kk^{\ell_m}$ and $\zeta_{I}(t_1,\ldots,t_m)$ is the corresponding Segre zeta function associated to $I$.
In \autoref{thmB}, we take a different approach: instead of considering a single multihomogeneous ideal $I$ and its Segre zeta function, we study a sequence of homogeneous ideals $I_1,\ldots,I_m$ and their mixed Segre zeta function. 
Owing to technical and favorable properties of mixed Segre classes, this change of perspective enables us to derive (the normalization of the homogenization of) the polynomial $Q(t_1,\ldots,t_m)$ in \autoref{thmB} from a volume polynomial (see the proofs of \autoref{thm_Lorentzian} and \autoref{prop_Lorentzian}).
Due to a result of Br\"and\'en and Huh \cite[Theorem 4.16]{BrandenHuh}, volume polynomials are always Lorentzian.
In contrast, covolume polynomials may fail to be Lorentzian, even after normalization (see, e.g., \cite[Example 2.7]{aluffi2024lorentzian}).
Nevertheless, Aluffi~\cite[Introduction]{aluffi2024lorentzian} conjectures that the homogenization of the numerator of $1 - \zeta_I(t_1, \ldots, t_m)$ is denormalized Lorentzian for any multihomogeneous ideal $I$ defining a closed subscheme of a product of projective spaces $\PP_\kk^{\ell_1} \times_\kk \cdots \times_\kk \PP_\kk^{\ell_m}$.

\medskip

\noindent
\textbf{Outline.}
The structure of the paper is as follows.
In \autoref{sect_mixed_Seg_blow_up}, we prove several results regarding mixed Segre classes, including a blow-up formula for them.
The behavior of the pull-back of mixed Segre classes under certain rational maps is studied in \autoref{sect_funct_Segre}.
We prove \autoref{thmA} in \autoref{sect_mixed_Seg_zeta}.
The proof of \autoref{thmB} is given in \autoref{sect_Lorentzian}.
Finally, we provide several examples and computations in \autoref{sect_examples}.

\section{Preliminaries}

In this section, we set up the notation that is used throughout the paper. 
We also present some preliminary results regarding multiprojective schemes and joint blow-ups.

Let $m \ge 1$ be a positive integer. 
If $\bn = (n_1,\ldots,n_m)$, $\bn' = (n_1',\ldots,n_m') \in \ZZ^m$ are two multi-indexes, we write $\bn \ge \bn'$ whenever $n_i \ge n_i'$ for all $1 \le i \le m$, and $\bn > \bn'$ whenever $n_j > n_j'$ for all $1 \le j \le m$.
For each $1 \le i \le m$, let $\ee_i \in \NN^m$ be the $i$-th elementary vector $\ee_i:=\left(0,\ldots,1,\ldots,0\right)$.
Let $\mathbf{0} \in \NN^m$ and $\mathbf{1} \in \NN^m$ be the vectors $\mathbf{0}:=(0,\ldots,0)$ and $\mathbf{1}:=(1,\ldots,1)$ of $m$ copies of $0$ and $1$, respectively.  

\begin{chunk}
	We say that an $\NN^m$-graded ring $B = \bigoplus_{\bn \in \NN^m} B_\bn$ is \emph{standard} if $B$ is generated as an algebra over $B_{\mathbf{0}}$ by finitely many elements of the form $b \in B_{\ee_i}$ for some $1 \le i \le m$.
	We always assume that all rings are
	Noetherian.
	The \emph{multigraded irrelevant ideal} is given by $B_{++} := \bigoplus_{\bn \ge \mathbf{1}} B_\bn$.
\end{chunk}

\begin{chunk}[{MultiProj of a standard multigraded ring; see~\cite[\S 1]{HYRY_MULTIGRAD}}]
	Let $B$ be a standard $\NN^m$-graded ring.
	The \emph{multiprojective scheme}  $\multProj(B)$ of $B$ is given by 
	$$
	X \;=\; \multProj(B) \;:=\; \big\{ \pp \in \Spec(B) \mid \pp \text{ is $\NN^m$-graded and } \pp \not\supseteq B_{++} \big\},
	$$
	and its scheme structure is obtained by using multi-homogeneous localizations. 
	We have a natural morphism $\pi : X \rightarrow \Spec(B_\mathbf{0})$. 
	A distinguished affine open cover of $X$ is given by 
	$$
	\OO_X\left(D(g_1\cdots g_m)\right) \;\cong\; \Spec\left(B_{(g_1\cdots g_m)}\right),
	$$ where $D(g_1\cdots g_m) := \left\lbrace \pp \in X \mid g_1\cdots g_m \notin \pp \right\rbrace$ and $B_{(g_1\cdots g_m)} := \left\lbrace f/(g_1\cdots g_m)^k\in B_{g_1\cdots g_m} \mid f \in B_{k \cdot \mathbf{1}}\right\rbrace$, for all $g_1 \in B_{\ee_1}, \ldots, g_m \in B_{\ee_m}$.
	A finitely generated $\ZZ^m$-graded $B$-module $M$ has an associated coherent sheaf $\widetilde{M}$ (the sheafification of $M$) on $X$.
\end{chunk}

\begin{chunk}[Relative MultiProj]
	\label{chunk_rel_mproj}
	Let $X$ be a Noetherian scheme. 
	Let $\mathcal{C} = \bigoplus_{\bn \in \NN^m}\mathcal{C}_\bn$  be a standard $\NN^m$-graded quasi-coherent $\OO_X$-algebra (i.e., $\mathcal{C}_{\mathbf{0}} = \OO_X$, each $\mathcal{C}_{\ee_i}$ is a coherent $\OO_X$-module, and $\mathcal{C}$ is generated over $\OO_X$ by $\mathcal{C}_{\ee_1}, \mathcal{C}_{\ee_2}, \ldots, \mathcal{C}_{\ee_m}$).
	\begin{enumerate}[\rm (a)]
		\item	For each open affine subscheme $U = \Spec(A) \subset X$, we  consider the standard $\NN^m$-graded $A$-algebra $\mathcal{C}(U) = \bigoplus_{\bn \in \NN^m}\mathcal{C}_\bn(U)$ and the  multiprojective scheme $\pi_U : \multProj(\mathcal{C}(U)) \rightarrow U$.
		By gluing all the morphisms $\pi_U$, we obtain the \emph{(relative) multiprojective scheme}
		$
		\pi : \sX = \fMProj_X\left(\mathcal{C}\right) \rightarrow X
		$
		of $\mathcal{C}$. 
		This multiprojective construction  mimics the (relative) $\fProj$ (see \cite[\S II.7]{HARTSHORNE}).
		A finitely generated $\ZZ^m$-graded $\mathcal{C}$-module $\mathcal{M}$ has an associated coherent sheaf $\widetilde{\mathcal{M}}$ (the sheafification of $\mathcal{M}$) on $\sX$.
		\item Consider the diagonal subalgebra $\Delta := \bigoplus_{k \ge 0} \mathcal{C}_{k \cdot \mathbf{1}} \subset \mathcal{C}$. 
		We have the natural isomorphism 
		$$
		\sX = \fMProj_X\left(\mathcal{C}\right) \;\xrightarrow{\;\;\cong\;\;} \fProj_X\left(\Delta\right).
		$$
		This isomorphism can be seen as a (relative) Segre embedding.
		\item Consider the standard $\NN^{m-1}$-graded algebra $\mathcal{B} = \bigoplus_{\bn \in \NN^{m-1}} \mathcal{C}_{\bn,0} \subset \mathcal{C}$.
		Let  $\sY = \fMProj_X(\mathcal{B})$. 
		Notice that $\mathcal{C} = \mathcal{B}\left[\mathcal{C}_{\ee_m}\right]$ is a standard $\NN$-graded algebra over $\mathcal{B}$.
		Indeed, each $\mathcal{C}_i := \mathcal{B} \cdot \mathcal{C}_{i \cdot \ee_m}= \bigoplus_{\bn \in \NN^{m-1}} \mathcal{C}_{\bn,i}$ is a finitely generated $\mathcal{B}$-module. 
		It follows that $\mathcal{S} := \bigoplus_{i \ge 0} \widetilde{\mathcal{C}_i}$ is a standard $\NN$-graded quasi-coherent $\OO_{\sY}$-algebra.
		We have the natural isomorphism 
		$
		\sX = \fMProj_X(\mathcal{C}) \cong \fProj_{\sY}\left(\mathcal{S}\right).
		$
		Thus the $\fMProj$ construction can be obtained by successively utilizing the $\fProj$ construction.
	\end{enumerate}
\end{chunk}

\begin{chunk}[Joint blow-up;  see \cite{KLEIMAN_THORUP_MIXED}]
	Let $X$ be a Noetherian scheme and $Z_1,\ldots,Z_m \subset X$ be closed subschemes.
	Let $Z \subset X$ be the closed subscheme defined by the product ideal sheaf $\mathcal{I}_{Z_1}\cdots \mathcal{I}_{Z_m} \subset \OO_X$.
	The multi-Rees algebra of $\mathcal{I}_{Z_1}, \ldots, \mathcal{I}_{Z_m} \subset \OO_X$ is given by
	$
	\sR = \sR_{\OO_{X}}\big(\mathcal{I}_{Z_1}, \ldots, \mathcal{I}_{Z_m}\big) \;:=\; \bigoplus_{(n_1,\ldots,n_m) \in \NN^m} \mathcal{I}_{Z_1}^{n_1} \cdots \mathcal{I}_{Z_m}^{n_m}.
	$
	\begin{enumerate}[\rm (a)]
		\item The \emph{joint blow-up} of $X$ along $Z_1,\ldots,Z_m$ is given by $\pi : \Bl_{Z_1,\ldots,Z_m}(X) := \fMProj_X\left(\sR\right) \rightarrow X$.
		\item We have the natural isomorphism $\Bl_{Z_1,\ldots,Z_m}(X) \xrightarrow{\;\;\cong\;\;} \Bl_Z(X)$.
		\item Let $\mathscr{Y} \subset \Bl_{Z_1,\ldots,Z_{m-1}}(X)$ be the closed subscheme defined by the ideal sheaf $\mathcal{I}_{Z_m} \OO_{\Bl_{Z_1,\ldots,Z_{m-1}}(X)}$.
		We have the natural isomorphism $\Bl_{Z_1,\ldots,Z_m}(X) \cong \Bl_{\mathscr{Y}}\left(\Bl_{Z_1,\ldots,Z_{m-1}}(X)\right)$.
		\item We have the inequality $\dim\left(\Bl_{Z_1,\ldots,Z_m}(X)\right) \le \dim(X)$, and an equality holds if each $Z_i$ is nowhere dense on $X$.
		More precisely, if $X_1,\ldots, X_s$ are the irreducible components of $X$, then 
		$$
		\dim\left(\Bl_{Z_1,\ldots,Z_m}(X)\right) \;=\; \max\big\lbrace \dim(X_i) \;\mid\; 1 \le i \le s \text{ and } \xi_i \in X\setminus Z \big\rbrace,
		$$
		where $\xi_i$ is the generic point of $X_i$.
		\item Assume $X = \Spec(R)$ is affine. Let $Z_i = V(I_i) \subset X$ for some ideal $I_i \subset R$. 
		Set $I = I_1\cdots I_m \subset R$ and $Z = V(I) \subset X$, and write $I = (f_1,\ldots,f_r)$. 
		A distinguished open cover of $\Bl_{Z_1,\ldots,Z_m}(X) \cong \Bl_Z(X)$ is given by
		$
		\Bl_Z(X) = \bigcup_{i=1}^r
		\Spec\left(R[I/f_i]\right),
		$
		where $R[I/f_i]$ denotes the $R$-subalgebra of $R_{f_i}$ generated by all $f/f_i$ with $f \in I$.
	\end{enumerate}
\end{chunk}
\begin{proof}
	Parts (a), (b) and (c) follow from \hyperref[chunk_rel_mproj]{\S 2.3}. 
	
	Part (d) follows, for instance, from \cite[\href{https://stacks.math.columbia.edu/tag/0BFM}{Lemma 0BFM}]{stacks-project},  \cite[Theorem 5.1.4]{huneke2006integral} or \cite[Proposition 12.14]{HIO} (see also \cite{KLEIMAN_THORUP_GEOM, KLEIMAN_THORUP_MIXED}).
	
	For part (e), see, for instance, \cite[\href{https://stacks.math.columbia.edu/tag/0804}{Lemma 0804}]{stacks-project} or \cite[Chapter 0, \S 3]{BENNET}.
\end{proof}

\begin{notation}
	We now recall some notation from \cite{KLEIMAN_THORUP_GEOM,KLEIMAN_THORUP_MIXED}.
	Let $X$ be a scheme, $W$ be a subscheme of $X$  and $\mathbf{S} \in Z_*(X)$ be a cycle on $X$. 
	We define the $W$-part $\mathbf{S}^W$ as follows: if $\mathbf{S}$ is the fundamental cycle of an integral closed subscheme $S \subset X$ of $X$, define $\mathbf{S}^W$ to be $\mathbf{S}$ if $W$ contains the generic point of $S$, and to be zero if not; then extend this definition by linearity.
\end{notation}

\begin{example}[{cf.~\cite[Remark 12.15]{HIO}}]
	Let $\kk$ be a field, $R = \kk[x,y,z,w]/\left(xzw, yzw\right)$ and $X = \Spec\left(R\right)$.
	Let $Z_1 = V(z) \subset X$ and $Z_2 = V(w) \subset X$. 
	Thus $\Bl_{Z_1, Z_2}(X) \cong  \Bl_Z(X)$, where $I = \left( zw\right) \subset R$ and $Z = V(I)$.
	%Let $R[I/zw]$ be the subalgebra of $R_{zw}$ generated by all the quotients $f/zw$ with $f \in I$.
	Since $I$ is a principal ideal,  we get the isomorphism $\Bl_Z(X) = \Spec\left(R[I/zw]\right)$.
	A direct computation shows $R[I/zw] \cong R/(x,y)$.
	Therefore, we obtain that $\dim(\Bl_{Z_1, Z_2}(X)) = 2 <\dim(X) = 3$.
	Finally, notice that $\big[\Spec(R/(x,y))\big] = \big[\Spec(R/(0:_RI^\infty))\big] = \left[X\right]^{X \setminus Z} \in Z_*(X)$.
\end{example}

\section{Mixed Segre classes and blow-up formulas for them}	
\label{sect_mixed_Seg_blow_up}

In this section, we extend the mixed Segre classes introduced by Kleiman and Thorup \cite{KLEIMAN_THORUP_MIXED} to the case of more than two closed subschemes.
The respective extension of their joint blow-up construction was made in \cite{cid2023relative}.
The following setup is used throughout this section. 

\begin{setup}
	\label{setup_kleiman_thorup}
	Let $X$ be a scheme of finite type over a field $\kk$.
	Let $n = \dim(X)$ be the dimension of $X$.
	Let $m \ge 1$ be a positive integer and $Z_1,\ldots,Z_m \subset X$ be a sequence of closed subschemes of $X$.
	For each $1 \le i \le m$, let $\mathcal{J}_i \subset \OO_X$ be the ideal sheaf of $Z_i$.
	Let $\iota: Z \hookrightarrow X$ be the closed subscheme defined by the product ideal sheaf $\mathcal{J} = \mathcal{J}_1 \cdots \mathcal{J}_m \subset \OO_X$.
\end{setup}

Let $\widehat{X} := \fSpec_X\left(\OO_X[t]\right) = X \times_{\Spec(\kk)} \mathbb{A}_\kk^1$ be the relative affine line over $X$. 
We view $X$ as a closed subscheme embedded in $\widehat{X}$ as the principal divisor $\{t=0\}$.
Let $\widehat{\mathcal{B}} := \Bl_{Z_1,\ldots,Z_m }\big(\widehat{X}\big)$ be the joint blow-up of $\widehat{X}$ along the closed subschemes $Z_1,\ldots,Z_m$.
This scheme can be obtained by successively blowing-up $\widehat{X}$ along  $Z_1, \ldots,Z_m$.
It arises from the $\NN^m$-graded quasi-coherent $\OO_{\widehat{X}}$-algebra
$$
\sR_{\OO_{\widehat{X}}}\big((\mathcal{J}_1,t), \ldots, (\mathcal{J}_m,t)\big) \;:=\; \bigoplus_{(n_1,\ldots,n_m) \in \NN^m} (\mathcal{J}_1,t)^{n_1} \cdots (\mathcal{J}_m,t)^{n_m};
$$
i.e., the multi-Rees algebra of the ideal sheaves $(\mathcal{J}_1,t), \ldots, (\mathcal{J}_m,t) \subset \OO_{\widehat{X}}$ and 
$$
\widehat{\mathcal{B}} \;=\; \fMProj_{\widehat{X}}\left(\sR_{\OO_{\widehat{X}}}\big((\mathcal{J}_1,t), \ldots, (\mathcal{J}_m,t)\big)\right).
$$
Let $\widehat{b} : \widehat{\mathcal{B}} = \Bl_{Z_1,\ldots,Z_m }\big(\widehat{X}\big) \rightarrow \widehat{X}$ be the natural projection.
To simplify notation, for each $1 \le i \le m$, set $\mathcal{K}_i := \left(\mathcal{J}_i, t\right) \subset \OO_{\widehat{X}}$.
For each $1 \le i \le m$, let $\ee_i$ be the $i$-th elementary vector $\ee_i = (0,\ldots,1,\ldots,0) \in \NN^m$ and $\OO_{\widehat{\mathcal{B}}}(\ee_i)$ be the corresponding line bundle.

The main objects of interest in this section are the following remarkable constructions by Kleiman and Thorup \cite{KLEIMAN_THORUP_MIXED}.

\begin{definition}
	\label{def_KT_transform}
	For any closed subscheme $W \subset X$, we say that the \emph{Kleiman--Thorup transform} of $W$ is given by 
	$$
	P_W^{Z_1,\ldots,Z_m}(X) \;:=\; \widehat{\mathcal{B}} \times_{\widehat{X}} W,
	$$
	the restriction of $\widehat{\mathcal{B}}$ to the preimage of $W$.
	In the special cases $W = Z$ and $W = X$, we set 
	$$
	P_Z \;:= \; P_Z^{Z_1,\ldots,Z_m}(X) \qquad \text{ and } \qquad P_X \;:= \; P_X^{Z_1,\ldots,Z_m}(X).
	$$
	Let $p :  P_Z \rightarrow Z$ and $\pi : P_X \rightarrow X$  be the natural projections. 
	For each $i_1,\ldots,i_m \in \NN$, we say that the  \emph{mixed Segre class} of $Z_1,\ldots,Z_m$ in $X$ of type $(i_1,\ldots,i_m)$ is given by
	$$
	s^{i_1,\ldots,i_m}\left(Z_1,\ldots,Z_m; X\right) \;:=\; p_*\left(c_1\left(\OO_{P_Z}(\ee_1)\right)^{i_1}c_1\left(\OO_{P_Z}(\ee_2)\right)^{i_2}\cdots c_1\left(\OO_{P_Z}(\ee_m)\right)^{i_m} \smallfrown \left[P_Z\right]\right) \;\in\; A_*(Z).
	$$
	The \emph{total mixed Segre class} of $Z_1,\ldots,Z_m$ in $X$ is given by 
	\begin{align*}
		s\left(Z_1,\ldots,Z_m;X\right)  \;&:=\; \sum_{i_1,\ldots,i_m\in \NN} s^{i_1,\ldots,i_m}\left(Z_1,\ldots,Z_m; X\right)\, t_1^{i_1}\cdots t_m^{i_m} \\
		 \;&=\; p_*\left( \frac{1}{\big(1-c_1\left(\OO_{P_Z}(\ee_1)\right)t_1\big)\,\cdots\, \big(1-c_1\left(\OO_{P_Z}(\ee_m)\right)t_m\big)}\, \smallfrown\, [P_Z]\right).
	\end{align*}
	Of course, if $m= 1$, then $s(Z_1; X)$ recovers the usual Segre class of $Z_1$ in $X$ (see \cite[Chapter 4]{FULTON_INTERSECTION_THEORY}).
	Moreover, we can see $s\left(Z_1,\ldots,Z_m;X\right)$ as a formal polynomial in the variables $t_1,\ldots,t_m$ and with coefficients in $A_*(Z)$.
	By an abuse of notation, we write
	$$
	s\left(Z_1,\ldots,Z_m;X\right) \;\in\; A_*(Z)[\ttt] \;=\; A_*(Z)[t_1,\ldots,t_m].
	$$
	We regard $A_*(Z)[t_1,\ldots,t_m]$ as a group with the operation given by the addition of the formal polynomials in $A_*(Z)[t_1,\ldots,t_m]$.
\end{definition}

\begin{notation}
We also consider the joint blow-up 
$$
\mathcal{B} \;:=\; \Bl_{Z_1,\ldots,Z_m }\left(X\right) \;=\; \fMProj_{X}\left(\sR_{\OO_{X}}\big(\mathcal{J}_1, \ldots, \mathcal{J}_m\big)\right)
$$
of $X$ along $Z_1,\ldots,Z_m$.
Let $b : \mathcal{B} = \Bl_{Z_1,\ldots,Z_m }\big(X\big)\rightarrow X$ be the natural projection.
Let $E_i := V\left(\mathcal{J}_i \OO_\mathcal{B}\right)$ be the pull-back to $\mathcal{B}$ of the exceptional divisor of the blow-up $\Bl_{Z_i}(X)$ of $X$ along $Z_i$.
Notice that $\OO_\mathcal{B}(-E_i) = \OO_\mathcal{B}(\ee_i)$ for each $1 \le i \le m$. 
We have a natural isomorphism 
$$
\Bl_{Z_1,\ldots,Z_m}\left(X\right) \;\xrightarrow{\;\cong\;} \Bl_Z\left(X\right)
$$
identifying $\Bl_{Z_1,\ldots,Z_m}(X)$ with the blow-up $\Bl_Z(X)$ of $X$ along $Z$.
Furthermore,  $E :=E_1+ \cdots + E_m$ coincides with the exceptional divisor of $\Bl_Z(X)$.
Let $\eta : E \rightarrow Z$ be the natural projection.
\end{notation}

\begin{remark}
	\label{rem_dim_P_Z}
	We can check that $\dim\left(P_X\right) = n$,  $\dim\left(P_Z\right) = n$ and $\dim\left(\Bl_{Z_1,\ldots,Z_m }\big(X\big)\right) \le n$.
	We always have $\dim\left(\Bl_{Z_1,\ldots,Z_m }\big(X\big)\right) \le n$ and an equality holds if each $Z_i$ is nowhere dense on $X$.
	Since each $Z_i$ is nowhere dense on $\widehat{X}$, we obtain that the dimension of $\widehat{\mathcal{B}}=\Bl_{Z_1,\ldots,Z_m }\big(\widehat{X}\big)$  is equal to $n+1$.
	It then follows that $\dim(P_X) = n$ because $P_X=V(t\OO_{\widehat{\mathcal{B}}})$ is a divisor on $\widehat{\mathcal{B}}$.
	Beware: notice that $P_Z$ is not necessarily a (Cartier) divisor on $\widehat{\mathcal{B}}$.
	However,  $V\left(\mathcal{K}_1\cdots\mathcal{K}_m\OO_{\widehat{\mathcal{B}}}\right)$ is a divisor on $\widehat{\mathcal{B}}$ and so the inclusions $ \left(\mathcal{J}_1\cdots\mathcal{J}_m,\, t^m\right) \subset \mathcal{K}_1\cdots\mathcal{K}_m \subset \left(\mathcal{J}_1\cdots\mathcal{J}_m,\, t\right)$ yield $\dim\left(P_Z\right) = \dim\left(V\left(\mathcal{K}_1\cdots\mathcal{K}_m\OO_{\widehat{\mathcal{B}}}\right)\right) = n$.
	Moreover, if $X$ is equidimensional, then $P_X$ and $P_Z$ are also equidimensional of dimension $n$.
\end{remark}

The next example illustrates the above constructions in a simple case.

\begin{example}
	Let $X = \AAA_\kk^2 = \Spec(\kk[x,y])$ and $\widehat{X} = \AAA_\kk^3 = \Spec(\kk[x,y,t])$.
	Let $Z_1 = V(x) \subset X$, $Z_2 = V(y) \subset X$ and $Z = V(xy) \subset X$.
	Let $S = \kk[x,y,t]$ and $K = (x,t)(y,t) = (xy, xt, yt, t^2) \subset S$.
	Then 
	\begin{align*}
		\widehat{\mathcal{B}} = \Bl_{Z_1, Z_2}(\widehat{X}) &\;=\; \Proj\left(\Rees_S(K)\right) \\
		&\;=\; \Spec\big(S\left[K/xy\right]\big) \cup \Spec\big(S\left[K/xt\big]\right) \cup \Spec\big(S\left[K/yt\big]\right) \cup \Spec\big(S\left[K/t^2\right]\big).
	\end{align*}
	Both $P_X = V(t\OO_{\widehat{\mathcal{B}}})$ and $V\left(K\OO_{\widehat{\mathcal{B}}}\right)$ are effective Cartier divisors on $\widehat{\mathcal{B}}$.
	Let $A = S\left[K/xy\right]$ and $U = \Spec\left(A\right)$.
	The isomorphism $A = \kk\left[x,y, t/x, t/y\right] \cong \kk\left[x,y,u,v\right]/\left(xu-yv\right)$ follows under the identifications $u \mapsto t/x$ and $v \mapsto t/y$.
	On the affine chart $U = \Spec(A)$, we get that $P_Z = V\left((xy, t)\OO_{\widehat{\mathcal{B}}}\right)$ is defined by the ideal $I = (xy, xu) = x(y,u) \subset A$.
	Consider the maximal ideal $\mm = (x,y,u,v) \subset A$.
	By Nakayama's lemma, the minimal number of generators of $I$ at $\mm$ is equal to $\dim_\kk\left(I/\mm I\right) = 2$. 
	This shows that $P_Z$ is not determined by one equation at the point $\mm \in U = \Spec(A) \subset \widehat{\mathcal{B}}$.
	Therefore, $P_Z = V\left((xy, t)\OO_{\widehat{\mathcal{B}}}\right)$ is not a Cartier divisor on $\widehat{\mathcal{B}}$. 	
\end{example}

The next proposition yields an important result for studying mixed Segre classes.

\begin{proposition}
	\label{prop_decomp_P_X}
	Assume that $X$ is equidimensional.
	Then we have the following equalities
	\begin{align*}
		\big[P_X\big] \;=\;  \big[P_Z\big] + \big[\mathcal{B}\big], \quad
		\big[P_Z\big] \;=\; \big[P_X\big]^{\pi^{-1}\left(Z\right)} \quad\text{and} \quad \big[\mathcal{B}\big] \;=\; \big[P_X\big]^{\pi^{-1}\left(X \setminus Z\right)}
	\end{align*}
	of cycles in $P_X$.
\end{proposition}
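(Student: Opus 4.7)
First I would invoke \autoref{rem_dim_P_Z}: the assumption that $X$ is equidimensional of dimension $n$ implies that $\widehat{X}$ and $\widehat{\mathcal{B}}$ are equidimensional of dimension $n+1$, that $P_X$ and $P_Z$ are equidimensional of dimension $n$, and that $\mathcal{B}$ is equidimensional of dimension $n$. In particular, $P_X = V(t\mathcal{O}_{\widehat{\mathcal{B}}})$ is a Cartier divisor on $\widehat{\mathcal{B}}$ and $[\mathcal{B}]_n = [\mathcal{B}]$; moreover, $P_Z = \pi^{-1}(Z)$ as schemes via $P_Z = \widehat{\mathcal{B}}\times_{\widehat{X}} Z = P_X\times_X Z$. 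Next, I would observe that $\widehat{b}:\widehat{\mathcal{B}}\to\widehat{X}$ is an isomorphism over $\widehat{X}\setminus Z_{\text{red}}$: inside $\widehat{X}$, $V(\mathcal{K}_i) = Z_i$, so outside $\bigcup_i Z_i = Z_{\text{red}}$ each $\mathcal{K}_i = (\mathcal{J}_i,t)$ is the unit ideal and the joint blow-up is trivial there. Consequently $\pi:P_X\setminus P_Z\to X\setminus Z$ is an isomorphism.

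From this, every $n$-dimensional irreducible component $C$ of $P_X$ falls into exactly one of two disjoint classes: \emph{vertical}, meaning $C\subseteq P_Z$ (equivalently $\pi(C)\subseteq Z$); or \emph{horizontal}, meaning $\pi(C)\not\subseteq Z$, in which case $\widehat{b}$ identifies the generic point of $C$ with that of an $n$-dimensional irreducible component of $X$ not contained in $Z$. This yields
$$
[P_X] \;=\; [P_X]^{\pi^{-1}(Z)} \;+\; [P_X]^{\pi^{-1}(X\setminus Z)},
$$
reducing the first equality to the other two. For the third, I would use the universal property of the joint blow-up, applied to $\mathcal{B}\xrightarrow{b} X = V(t)\hookrightarrow\widehat{X}$ (on which each $\mathcal{K}_i\mathcal{O}_\mathcal{B} = \mathcal{J}_i\mathcal{O}_\mathcal{B}$ is invertible since $t=0$), to produce a closed immersion $\mathcal{B}\hookrightarrow\widehat{\mathcal{B}}$ factoring through $P_X$; its image coincides with the Zariski closure of the horizontal locus in $\widehat{\mathcal{B}}$, giving a bijection between the $n$-dimensional components of $\mathcal{B}$ and the horizontal components of $P_X$, with multiplicities matching because both $\widehat{b}$ and $b$ are isomorphisms at the corresponding generic points.

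The heart of the argument, and its \textbf{main obstacle}, will be the second equality. The supports already coincide by the classification above, so matching reduces to showing that for each $n$-dimensional vertical component $C$ with generic point $\eta$, the local ring $R := \mathcal{O}_{\widehat{\mathcal{B}},\eta}$ (which is Noetherian of dimension one) satisfies $\ell(R/tR) = \ell\bigl(R/(\mathcal{J}_1\cdots\mathcal{J}_m,\, t)R\bigr)$, equivalently $\mathcal{J}_1\cdots\mathcal{J}_m\cdot R\subseteq tR$. To establish this, I would perform a local analysis: choose principal generators $\mathcal{K}_iR = (g_i)R$, write $t = g_i\alpha_i$ and $\mathcal{J}_iR = g_i\mathcal{J}_i'R$, and exploit the identity $\mathcal{J}_i'R + \alpha_iR = R$ coming from $(\mathcal{J}_i,t)R = (g_i)R$. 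A case split then follows: if some $\alpha_i$ is a unit (for $i$ with $g_i$ non-unit), then $(g_i)R = tR$, so $\mathcal{J}_iR\subseteq tR$ and \emph{a fortiori} $\mathcal{J}_1\cdots\mathcal{J}_mR\subseteq tR$; otherwise $\mathcal{J}_iR = (g_i)R$ for every relevant $i$, and the identity $t^m = \prod_i g_i\cdot\prod_i\alpha_i$, combined with the equidimensionality of $P_Z$ and the one-dimensional structure of $R$, is used to force $\sum_i v(g_i)\ge v(t)$ and hence the desired containment. Combining with the third equality delivers the first.
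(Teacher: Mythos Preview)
Your strategy is genuinely different from the paper's, and the gap lies exactly where you flagged it. The paper does not attempt to match multiplicities at vertical generic points. Instead it writes down the short exact sequence of $\NN^m$-graded $\mathcal{O}_X$-modules
\[
0 \to \mathscr{Q} \to \widehat{\sR}/t\widehat{\sR} \to \widehat{\sR}/(\mathcal{K}_1\cdots\mathcal{K}_m,\,t)\widehat{\sR} \to 0,
\]
observes that $(\mathcal{K}_1\cdots\mathcal{K}_m,t)=(\mathcal{J}_1\cdots\mathcal{J}_m,t)$ so the right term is $P_Z$, and then proves the elementary ideal inclusion
\[
\mathcal{K}_1^{n_1+1}\cdots\mathcal{K}_m^{n_m+1}\cap t\mathcal{O}_{\widehat{X}}\subseteq t\,\mathcal{K}_1^{n_1}\cdots\mathcal{K}_m^{n_m}
\]
by expanding $(\mathcal{J}_i,t)^{n_i+1}$. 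This forces the surjection $\mathscr{Q}\twoheadrightarrow\sR(1,\ldots,1)$ to be an isomorphism in high multidegrees, so the associated sheaf $\mathcal{F}$ has $[\mathcal{F}]=[\mathcal{B}]$ as cycles. Additivity then gives $[P_X]=[P_Z]+[\mathcal{B}]_n$ directly, and the other two equalities fall out from obvious support considerations.

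Your Case~2 sketch does not go through as written. From $t=g_i\alpha_i$ with each $\alpha_i\in\mathfrak{m}$ you get $v(t)=v(g_i)+v(\alpha_i)$ and hence $m\,v(t)=\sum_i v(g_i)+\sum_i v(\alpha_i)$; but neither this identity nor the equidimensionality of $P_Z$ gives any mechanism to bound $\sum_i v(\alpha_i)\le (m-1)v(t)$, which is what you need. Equidimensionality only tells you $C$ is a component of $P_Z$, not how the lengths $v(g_i)$ and $v(t)$ compare. (There is also a minor slip earlier: \autoref{rem_dim_P_Z} does \emph{not} say $\mathcal{B}$ is equidimensional of dimension $n$ --- if some $Z_i$ contains a component of $X$, $\mathcal{B}$ can have smaller dimension --- which is precisely why the statement uses $[\mathcal{B}]_n$ rather than $[\mathcal{B}]$.) If you want to salvage your route, the cleanest fix is to abandon the local length comparison and instead verify $[P_X]=[P_Z]+[\mathcal{B}]_n$ globally via the paper's exact-sequence argument; your horizontal/vertical dichotomy then recovers the remaining two equalities exactly as you describe.
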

\begin{proof}
	By \autoref{rem_dim_P_Z}, both $P_X$ and $P_Z$ are equidimensional of dimension $n$.
	To simplify notation, let $\sR := \sR_{\OO_{X}}\big(\mathcal{J}_1, \ldots, \mathcal{J}_m\big)$ and $\widehat{\sR} := \sR_{\OO_{\widehat{X}}}\big(\mathcal{K}_1, \ldots, \mathcal{K}_m\big)$.
	We have the short exact sequence 
	$$
	0 \;\rightarrow \; \mathscr{Q} \;\rightarrow\; \widehat{\sR}/t\widehat{\sR} \;\rightarrow\; \widehat{\sR}/\left(\mathcal{K}_1\cdots\mathcal{K}_m,\, t\right)\widehat{\sR} \rightarrow 0
	$$
	where $\mathscr{Q}$ is the $\NN^m$-graded quasi-coherent $\OO_{\widehat{X}}$-module given by 
	$$
	\mathscr{Q} \;:=\;  \bigoplus_{(n_1,\ldots,n_m) \in \NN^m}\; \frac{\mathcal{K}_1^{n_1+1}\cdots\mathcal{K}_m^{n_m+1}}{\mathcal{K}_1^{n_1+1}\cdots\mathcal{K}_m^{n_m+1} \,\cap\, t\mathcal{K}_1^{n_1}\cdots\mathcal{K}_m^{n_m}}.
	$$
	There is a natural surjection 
	$$
	\mathscr{Q} \;\;\surjects\;\; \sR(1,\ldots,1) \;\cong\; \bigoplus_{(n_1,\ldots,n_m) \in \NN^m}\; \frac{\mathcal{K}_1^{n_1+1}\cdots\mathcal{K}_m^{n_m+1}}{\mathcal{K}_1^{n_1+1}\cdots\mathcal{K}_m^{n_m+1} \,\cap\, t\OO_{\widehat{X}}}.
	$$	
	We have $P_X = \fMProj_{X}\big(\widehat{\sR}/t\widehat{\sR}\big)$, $P_Z = \fMProj_{X}\big(\widehat{\sR}/\left(\mathcal{K}_1\cdots\mathcal{K}_m,\, t\right)\widehat{\sR} \big)$ and $\mathcal{B} = \fMProj_{X}\big(\sR\big)$.
	Let $\mathcal{F}$ be the coherent $\OO_{P_X}$-module associated with $\mathscr{Q}$.
	We claim that the equality $[\mathcal{F}] = [\mathcal{B}] \in Z_*(P_X)$ holds in $P_X$.
	This equality of cycles follows by showing that $\left[\mathscr{Q}\right]_{(n_1,\ldots,n_m)}=\left[\mathscr{R}\right]_{(n_1+1,\ldots,n_m+1)}$ for all $n_1\gg0,\ldots,n_m\gg0$.
	The desired equality $\left[\mathscr{Q}\right]_{(n_1,\ldots,n_m)}=\left[\mathscr{R}\right]_{(n_1+1,\ldots,n_m+1)}$ holds whenever we have the inclusion 
	$$
	\mathcal{K}_1^{n_1+1}\cdots \mathcal{K}_m^{n_m+1} \,\cap\, t\OO_{\widehat{X}} \;\subset\; t \mathcal{K}_1^{n_1}\cdots\mathcal{K}_m^{n_m}.
	$$
	This inclusion can be checked by expanding  $(\mathcal{J}_1,t)^{n_1+1} \cdots (\mathcal{J}_m,t)^{n_m+1}$ and $t(\mathcal{J}_1,t)^{n_1} \cdots (\mathcal{J}_m,t)^{n_m}$.
	It is clear that the support of $[P_Z]$ is contained in $\pi^{-1}(Z)$.
	Finally, the statements of the proposition follow because the generic points of the irreducible components of $\mathcal{B} = \Bl_{Z_1,\ldots,Z_m}(X)$ are all contained in $\pi^{-1}(X\setminus Z)$.
\end{proof}

It is convenient to consider the formal push-forward 
$$
\iota_*\left(s\left(Z_1,\ldots,Z_m;X\right) \right) \;:=\; \sum_{i_1,\ldots,i_m\in \NN} \iota_*\left(s^{i_1,\ldots,i_m}\left(Z_1,\ldots,Z_m; X\right)\right)\, t_1^{i_1}\cdots t_m^{i_m}
$$
of the total mixed Segre class $s\left(Z_1,\ldots,Z_m;X\right)$.
Hence we can regard $\iota_*\left(s\left(Z_1,\ldots,Z_m;X\right)\right)$ as a formal polynomial in the variables $t_1,\ldots,t_m$ and with coefficients in $A_*(X)$.
	
The main result of this section is the following blow-up formula for mixed Segre classes (cf.~\cite[\S 5]{KLEIMAN_THORUP_MIXED}, \cite[Corollary 4.2.2, Example 4.2.1]{FULTON_INTERSECTION_THEORY}, \cite[Proposition 3.1]{ALUFFI_COMPUTING}).	
To simplify notation, let $\underline{Z}=Z_1,\ldots,Z_m$.	
	
\begin{theorem}[Blow-up formula]
	\label{thm_blow_up_form}
	Assume \autoref{setup_kleiman_thorup} and that $X$ is equidimensional.
	Then the statements below hold:
	\begin{enumerate}[\rm (i)]
		\item Modulo rational equivalence on $X$, we have
		$$
		\iota_*\left(s\left(\underline{Z};X\right) \right) \;=\; \left[X\right] \;-\; b_*\left(\frac{1}{\big(1+E_1t_1\big)\cdots \big(1+E_mt_m\big)} \,\smallfrown\, \big[\Bl_{Z}(X)\big]\right) \;\in\; A_*(X)[\ttt].
		$$
		\item Modulo rational equivalence on $Z$, we have
		$$
		s\left(\underline{Z};X\right) \;=\; \left[X\right]^Z \;+\; \eta_*\left(\frac{
			\big(1+E_1t_1\big)\cdots \big(1+E_mt_m\big) - 1
			}{\big(1+E_1t_1\big)\cdots \big(1+E_mt_m\big)} \,\smallfrown\, \big[\Bl_{Z}(X)\big]\right) \;\in\; A_*(Z)[\ttt].
		$$
		In other words, we have
		$$
		s^{i_1,\ldots,i_m}\left(\underline{Z};X\right) \;=\; \begin{cases}
			\left[X\right]^{Z} & \text{ if }\; i_1=\cdots=i_m=0\\
			{(-1)}^{i_1+\cdots+i_m-1}\;\eta_*\left(E_1^{i_1}\cdots E_m^{i_m} \cdot \big[\Bl_{Z}(X)\big]\right) & \text{ otherwise.}
		\end{cases}
		$$
	\end{enumerate}
\end{theorem}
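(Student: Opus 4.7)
The strategy is to apply a single total Segre operator to the cycle-level decomposition $[P_X] = [P_Z] + [\mathcal{B}]_n$ supplied by \autoref{prop_decomp_P_X}, identify each of the resulting three terms, and then deduce (ii) from (i) by isolating the $Z$-supported contributions. More precisely, I would apply
\[
\pi_*\!\left(\frac{1}{\prod_{i=1}^m\bigl(1-c_1(\OO_{P_X}(\ee_i))\,t_i\bigr)} \smallfrown -\right)
\]
to both sides. Over $P_Z \hookrightarrow P_X$, the map $\pi$ factors as $\iota \circ p$ and each $\OO_{P_X}(\ee_i)$ restricts to $\OO_{P_Z}(\ee_i)$, so by \autoref{def_KT_transform} this piece contributes $\iota_*\bigl(s(\underline{Z};X)\bigr)$. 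Over $\mathcal{B}\hookrightarrow P_X$, we have $\pi|_\mathcal{B} = b$, and the multi-Rees structure gives $\OO_{\widehat{\mathcal{B}}}(\ee_i)|_\mathcal{B} \cong \mathcal{J}_i\OO_\mathcal{B} = \OO_\mathcal{B}(-E_i)$, so this piece contributes $b_*\bigl(\prod_i(1+E_it_i)^{-1}\smallfrown[\mathcal{B}]_n\bigr)$.

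The heart of the argument is then the identity
\[
\pi_*\!\left(\frac{1}{\prod_{i=1}^m\bigl(1-c_1(\OO_{P_X}(\ee_i))\,t_i\bigr)} \smallfrown [P_X]\right) \;=\; [X] \qquad \text{in } A_*(X)[\ttt].
\]
I would prove this by exploiting the one-parameter family structure on $\widehat{\mathcal{B}}$ encoded by the regular function $t$. The Cartier divisor $V(t) = P_X$ is rationally equivalent in $\widehat{\mathcal{B}}$ to $V(t-1)$ (their difference is the principal divisor of $t/(t-1)$). Over the open locus $\{t \neq 0\} \subseteq \widehat{\mathcal{B}}$, each ideal $\mathcal{K}_i = (\mathcal{J}_i, t)$ is the unit ideal, so $\widehat{b}$ is an isomorphism there; in particular $\widehat{b}$ restricts to an isomorphism $V(t-1) \xrightarrow{\cong} X \times \{1\} \cong X$, and each $\OO_{\widehat{\mathcal{B}}}(\ee_i)|_{V(t-1)}$ is trivial since the section corresponding to $t$ of the invertible sheaf $\mathcal{K}_i\OO_{\widehat{\mathcal{B}}} = \OO_{\widehat{\mathcal{B}}}(-\ee_i)$ is nowhere vanishing on $V(t-1)$. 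Capping the ambient class $\prod_i(1-c_1(\OO_{\widehat{\mathcal{B}}}(\ee_i))t_i)^{-1}\smallfrown[\widehat{\mathcal{B}}]$ against the equivalent divisors $V(t)$ and $V(t-1)$ and pushing forward via the proper map $\widehat{b}$ then exhibits the desired pushforward as $[X]$. This deformation argument---which must simultaneously control a rational equivalence of Cartier divisors, the triviality of the line bundles on the alternate fiber, and the proper pushforward $\widehat{b}_*$---is the main technical obstacle; it is modeled on the deformation arguments for the $m=1$ case in \cite{KLEIMAN_THORUP_MIXED}. Assembling the three terms yields statement (i).

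For statement (ii), I would extract the $Z$-supported part of (i). Comparing constant terms in $\ttt$ forces $b_*[\mathcal{B}]_n = [X] - \iota_*[X]^Z$, reflecting that the $n$-dimensional components of $\mathcal{B} = \Bl_Z(X)$ are exactly the strict transforms of the irreducible components of $X$ not contained in $Z$. Substituting this back into (i) gives
\[
\iota_*\bigl(s(\underline{Z};X) - [X]^Z\bigr) \;=\; b_*\!\left(\frac{\prod_i(1+E_it_i)-1}{\prod_i(1+E_it_i)} \smallfrown [\mathcal{B}]_n\right).
\]
The numerator $\prod_i(1+E_it_i) - 1$ has each monomial divisible by some $E_i$, so the right-hand class is $E$-supported, and consequently the pushforward $b_*$ restricted to this class factors as $\iota_*\eta_*$. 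Cancelling $\iota_*$ at the level of cycles supported on $Z$---which is legitimate here because both sides of (ii) arise from explicit cycle-level constructions on $P_Z$ and $E$ that project compatibly along $\iota$---yields (ii).
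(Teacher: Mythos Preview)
Your proof of (i) follows the paper's strategy exactly: apply the total Segre operator to the decomposition $[P_X] = [P_Z] + [\mathcal{B}]_n$ of \autoref{prop_decomp_P_X} and identify the three resulting terms. The only difference is in how you establish the identity $\pi_*\bigl(\prod_i(1-c_1(\OO_{P_X}(\ee_i))t_i)^{-1}\smallfrown[P_X]\bigr)=[X]$. The paper splits this into two pieces: first $\pi_*[P_X]=[X]$ by the projection formula applied to the divisor $X\subset\widehat X$, and then $c_1(\OO_{P_X}(\ee_i))\smallfrown[P_X]=0$ in $A_*(P_X)$, obtained by intersecting the vanishing class $[P_X]=0\in A_*(\widehat{\mathcal{B}})$ with the Cartier divisor $\widehat{E}_i$ (whose support lies inside $P_X$, so the Gysin class lands in $A_*(P_X)$). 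Your deformation to the fiber $V(t-1)$ is a legitimate alternative, but the sentence ``pushing forward via the proper map $\widehat b$'' skips a step: after $\widehat b_*$ you are in $A_*(X\times_\kk\mathbb{A}_\kk^1)$, where both fiber classes are already zero, so the rational equivalence there carries no information. To extract a statement in $A_*(X)$ you must also invoke the isomorphism $A_*(X)\xrightarrow{\cong}A_*(X\times_\kk\mathbb{A}_\kk^1)$ given by flat pullback together with the compatibility of Gysin pullback to a fiber with proper pushforward (\cite[Theorem~6.2]{FULTON_INTERSECTION_THEORY}).

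Your derivation of (ii) from (i), however, has a genuine gap: the map $\iota_*\colon A_*(Z)\to A_*(X)$ is not injective in general, so ``cancelling $\iota_*$'' is not a valid step, and the appeal to ``explicit cycle-level constructions that project compatibly'' does not repair this --- rational equivalence on $X$ among cycles supported on $Z$ is strictly coarser than rational equivalence on $Z$. The paper avoids this entirely by proving (ii) directly in $A_*(Z)$. For the constant term it uses the description $[P_Z]=[P_X]^{\pi^{-1}(Z)}$ from \autoref{prop_decomp_P_X} to get $p_*[P_Z]=[X]^Z$. For $i_1+\cdots+i_m>0$ the key observation is that the same vanishing $\widehat E_i\cdot[P_X]=0$ already holds as a class in $A_*(P_Z)$ (since $|\widehat E_i|\subseteq|P_Z|$ set-theoretically), whence $\widehat E_i\cdot[P_Z]=-\widehat E_i\cdot[\mathcal{B}]_n$ in $A_*(P_Z)$; iterating gives $\widehat E_1^{i_1}\cdots\widehat E_m^{i_m}\cdot[P_Z]=-E_1^{i_1}\cdots E_m^{i_m}\cdot[\mathcal{B}]_n$ in $A_*(E)$. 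Pushing forward by $p$ (equivalently $\eta$) then yields the formula in (ii) without ever passing through $A_*(X)$.
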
	
\begin{proof}
	(i)	
	We consider the fibre square 
		\begin{equation*}
		\begin{tikzpicture}[baseline=(current  bounding  box.center)]
			\matrix (m) [matrix of math nodes,row sep=3em,column sep=4.5em,minimum width=2em, text height=1.5ex, text depth=0.25ex]
			{
				P_Z & P_X \\
				Z & X.\\
			};
			\path[-stealth]
			(m-1-1) edge node [above] {$j$} (m-1-2)
			(m-2-1) edge node [above] {$\iota$} (m-2-2)
			(m-1-1) edge node [left] {$p$} (m-2-1)
			(m-1-2) edge node [left] {$\pi$} (m-2-2)
			;		
		\end{tikzpicture}	
	\end{equation*}
	Then we can deduce the following equalities 
	\begin{alignat*}{3}
			\iota_*\left(s\left(\underline{Z};X\right) \right) & \;=\; \iota_*\left(p_*\left(\prod_{i=1}^m\frac{1}{\big(1-c_1\left(\OO_{P_Z}(\ee_i)\right)t_i\big)}\, \smallfrown\, [P_Z]\right)\right) & \quad\text{ (by definition)}\\
			& \;=\; \pi_*\left(j_*\left(\prod_{i=1}^m\frac{1}{\big(1-c_1\left(\OO_{P_Z}(\ee_i)\right)t_i\big)}\, \smallfrown\, [P_Z]\right)\right) & \quad\text{ (by functoriality of push-forward)}\\
			& \;=\; \pi_*\left(\prod_{i=1}^m\frac{1}{\big(1-c_1\left(\OO_{P_X}(\ee_i)\right)t_i\big)}\, \smallfrown\, [P_Z]\right) & \quad\text{ (by the projection formula)}\\
			& \;=\; \pi_*\left(\prod_{i=1}^m\frac{1}{\big(1-c_1\left(\OO_{P_X}(\ee_i)\right)t_i\big)}\, \smallfrown\, \big([P_X] - [\mathcal{B}]\big)\right) & \quad\text{ (by \autoref{prop_decomp_P_X}).}
	\end{alignat*}
	For the summand involving $[\mathcal{B}]$, the projection formula and the fact that $\OO_\mathcal{B}(-E_i) = \OO_\mathcal{B}(\ee_i)$ yield
$$
		 \pi_*\left(\prod_{i=1}^m\frac{1}{\big(1-c_1\left(\OO_{P_X}(\ee_i)\right)t_i\big)}\, \smallfrown\, [\mathcal{B}]\right) \;=\;  b_*\left(\prod_{i=1}^m\frac{1}{\big(1+E_it_i\big)}\, \smallfrown\, [\mathcal{B}]\right).
$$
It remains to show that the other summand involving $[P_X]$ is equal to $[X]$.
Since each $Z_i$ is nowhere dense on $\widehat{X}$, the map $\widehat{b} : \widehat{\mathcal{B}} \rightarrow \widehat{X}$ is birational and so we get $\widehat{b}_*\left(\big[\widehat{\mathcal{B}}\big]\right)=\widehat{X}$.
We view $X$ and $P_X$ as divisors on $\widehat{X}$ and $\widehat{\mathcal{B}}$, respectively. 
Then the projection formula gives 
\begin{equation}
	\label{eq_project_P_X}
	\pi_*\left([P_X]\right) \;=\; \pi_*\left(P_X \,\cdot\, \big[\widehat{\mathcal{B}}\big]\right) \;=\; X \,\cdot\, \widehat{b}_*\left(\big[\widehat{\mathcal{B}}\big]\right)\;=\; X \,\cdot\, \big[\widehat{X}\big] \;=\; [X] \;\in\; A_*(X).
\end{equation}
Consider the divisor $\widehat{E}_i = V(\mathcal{K}_i\OO_{\widehat{\mathcal{B}}})$ on $\widehat{\mathcal{B}}$ and recall that $\OO_{\widehat{\mathcal{B}}}(-\widehat{E}_i) = \OO_{\widehat{\mathcal{B}}}(\ee_i)$.
Since $|\widehat{E}_i| \subset |P_X|$,  we have the restriction $j_i : \widehat{E}_i \hookrightarrow P_X$.
The class $[P_X] = 0 \in A_*(\widehat{\mathcal{B}})$ is rationally equivalent to zero in $\widehat{\mathcal{B}}$ because $P_X=V(t\OO_{\widehat{\mathcal{B}}})$ is a principal divisor on $\widehat{\mathcal{B}}$.
It then follows that $\widehat{E}_i \,\cdot\, [P_X] = 0 \in A_{*}(|\widehat{E}_i|)$. 
By utilizing the Gysin map of $\widehat{E}_i$ in $P_X$ (see \cite[\S 2.6]{FULTON_INTERSECTION_THEORY}), we obtain 
\begin{equation}
	\label{eq_vanish_P_X}
	-c_1\left(\OO_{P_X}(\ee_i)\right) \,\smallfrown\, [P_X] \;=\; {j_i}_*\left(j_i^*\left([P_X]\right)\right) \;=\; {j_i}_*\left(\widehat{E}_i \,\cdot\, [P_X]\right) \;=\; 0 \;\in\; A_{*}(P_X). 
\end{equation}
Consequently, we get 
$$
\pi_*\left(\prod_{i=1}^m\frac{1}{\big(1-c_1\left(\OO_{P_X}(\ee_i)\right)t_i\big)}\, \smallfrown\, [P_X]\right) \;=\;  [X].
$$
This concludes the proof of this part of the theorem.

\smallskip

(ii) The proof of this part follows similarly to part (i).
First, we note that $
s^{0,\ldots,0}\left(\underline{Z}; X\right) \;=\; \left[X\right]^Z \;\in\; A_n(Z)=Z_n(Z). 
$
Indeed, by combining \autoref{prop_decomp_P_X} and \autoref{eq_project_P_X}, we obtain 
$$
s^{0,\ldots,0}\left(\underline{Z}; X\right)  \;=\; p_*\left(\big[P_Z\big]\right) \;=\; p_*\left(\big[P_X\big]^{\pi^{-1}(Z)}\right) \;=\; \pi_*\left(\big[P_X\big]\right)^Z \;=\; \big[X\big]^Z.
$$
For $i_1+\cdots+i_m>0$, by definition we have 
$$
s^{i_1,\ldots,i_m}\left(\underline{Z}; X\right)  \;=\; p_*\left({(-1)}^{i_1+\cdots+i_m}\,\widehat{E}_1^{i_1} \cdots \widehat{E}_m^{i_m} \,\cdot \, \left[P_Z\right]\right).
$$
Similarly to \autoref{eq_vanish_P_X}, we have 
${\widehat{E}}_i \cdot [P_X] = 0 \in A_*(P_Z)$.
Hence \autoref{prop_decomp_P_X} yields
$$
\widehat{E}_i \,\cdot\, \left[P_Z\right] \;=\, - \widehat{E}_i \,\cdot\, \left[\mathcal{B}\right] \;\in\; A_*(P_Z).
$$
Since $|\widehat{E}_i| \cap \left[\mathcal{B}\right] \subset |E|$, this implies that
$$
\widehat{E}_1^{i_1} \cdots \widehat{E}_m^{i_m} \,\cdot \, \left[P_Z\right] \;=\; -\widehat{E}_1^{i_1} \cdots \widehat{E}_m^{i_m} \,\cdot\, \left[\mathcal{B}\right] 
\;=\; -E_1^{i_1} \cdots E_m^{i_m} \,\cdot\, \left[\mathcal{B}\right]
\;\in\; A_*(E)
$$
for any $i_1 + \cdots + i_m > 0$.
From these observations the formula of part (ii) follows.
\end{proof}

\begin{remark}
	\label{rem_seg_fulton}
	If $X$ is equidimensional and each $Z_i$ is nowhere dense on $X$, then \autoref{thm_blow_up_form}(ii) gives
	\begin{align*}
	s\left(\underline{Z};X\right) &\;=\; \sum_{i_1+\cdots+i_m \ge 1} {(-1)}^{i_1+\cdots+i_m-1}\;\eta_*\left(E_1^{i_1}\cdots E_m^{i_m} \cdot \big[\mathcal{B}\big]\right)\, t_1^{i_1}\cdots t_m^{i_m}	\\
	&\;=\; \frac{\left[E_1\right]t_1}{\big(1+E_1t_1\big)\cdots \big(1+E_mt_m\big)} \,+\, \frac{\left[E_2\right]t_2}{\big(1+E_2t_2\big)\cdots \big(1+E_mt_m\big)} \,+\,\cdots \,+\, \frac{\left[E_m\right]t_m}{\big(1+E_mt_m\big)}.
	\end{align*}
	When $m=1$, this recovers a well-known formula for Segre classes (see \cite[Corollary 4.2.2]{FULTON_INTERSECTION_THEORY}).
\end{remark}

We now prove some further results for mixed Segre classes that follow from \autoref{thm_blow_up_form}(ii). 
We first have a mixed formula relating mixed Segre classes with the usual Segre classes.

\begin{corollary}[Mixed formula]
	\label{cor_mixed_formula}
	Assume that $X$ is equidimensional.
	Then, for all $r \ge 0$, we have 
	$$
	s^r\left(Z; X\right) \;=\; \sum_{i_1+\cdots+i_m = r}\frac{r!}{i_1!\cdots i_m!} \,s^{i_1,\ldots,i_m}\left(Z_1,\ldots,Z_m; X\right) \;\in\; A_{n-r}(Z).
	$$
\end{corollary}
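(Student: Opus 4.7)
The plan is to deduce the mixed formula from the blow-up formula of \autoref{thm_blow_up_form}(ii), using the key fact that the exceptional divisor of $\Bl_Z(X)$ decomposes as $E = E_1 + \cdots + E_m$ under the natural isomorphism $\Bl_{Z_1,\ldots,Z_m}(X) \xrightarrow{\cong} \Bl_Z(X)$. Since $\mathcal{J}\OO_{\mathcal{B}} = (\mathcal{J}_1\OO_{\mathcal{B}})\cdots(\mathcal{J}_m\OO_{\mathcal{B}})$ as products of invertible ideal sheaves, the corresponding Cartier divisors satisfy $E = E_1 + \cdots + E_m$ additively in the Picard group, so intersection with $E$ amounts to intersection with $E_1 + \cdots + E_m$.

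First I would treat the case $r = 0$ separately. In that case the sum on the right-hand side contains only the multi-index $(0,\ldots,0)$ with multinomial coefficient $1$, and by \autoref{thm_blow_up_form}(ii) we have $s^{0,\ldots,0}(Z_1,\ldots,Z_m;X) = [X]^Z$. Applying the same theorem with $m = 1$ to the single subscheme $Z$ yields $s^0(Z;X) = [X]^Z$ as well, so both sides agree.

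For $r \geq 1$, I would apply \autoref{thm_blow_up_form}(ii) with $m = 1$ to get
$$
s^r(Z;X) \;=\; (-1)^{r-1}\, \eta_*\!\left(E^r \cdot \big[\Bl_Z(X)\big]_n\right),
$$
expand $E^r = (E_1 + \cdots + E_m)^r$ by the multinomial theorem inside $A_*(E)$, and push forward term by term. Each summand $\eta_*\!\left(E_1^{i_1}\cdots E_m^{i_m} \cdot [\Bl_Z(X)]_n\right)$ with $i_1 + \cdots + i_m = r$ equals $(-1)^{r-1} s^{i_1,\ldots,i_m}(Z_1,\ldots,Z_m;X)$ again by \autoref{thm_blow_up_form}(ii), since $(-1)^{i_1+\cdots+i_m - 1} = (-1)^{r-1}$. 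The signs cancel and one obtains the desired identity
$$
s^r(Z;X) \;=\; \sum_{i_1+\cdots+i_m = r} \frac{r!}{i_1!\cdots i_m!}\, s^{i_1,\ldots,i_m}(Z_1,\ldots,Z_m; X).
$$

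There is no real obstacle here; everything reduces to the additivity $E = E_1 + \cdots + E_m$ combined with the multinomial theorem. The only subtlety to check is that the two different routes to identify the exceptional divisor (via $\Bl_Z(X)$ versus via the multi-Rees construction) produce the same Cartier divisor class, which is immediate from the factorization $\mathcal{J}\OO_{\mathcal{B}} = \mathcal{J}_1\OO_{\mathcal{B}} \otimes \cdots \otimes \mathcal{J}_m\OO_{\mathcal{B}}$ of invertible sheaves on $\mathcal{B}$.
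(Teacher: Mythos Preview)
Your proposal is correct and follows essentially the same approach as the paper: handle $r=0$ via $[X]^Z$, then for $r>0$ apply \autoref{thm_blow_up_form}(ii) with $m=1$ to write $s^r(Z;X) = (-1)^{r-1}\eta_*\big((E_1+\cdots+E_m)^r\cdot[\mathcal{B}]_n\big)$ and expand by the multinomial theorem. The paper's proof is slightly terser but identical in substance, relying on the same decomposition $E=E_1+\cdots+E_m$ that you justified.
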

\begin{proof}
	Due to \autoref{thm_blow_up_form}(ii), we have $s^0(Z;X) = [X]^Z = s^{0,\ldots,0}(Z_1,\ldots,Z_m;X)$. 
	So we may assume $r > 0$.
%	The right-hand side of the claimed equality is equal to $C:=p_*\left(\left(c_1\left(\OO_P(1,\ldots,1)\right)\right)^r \smallfrown [P]\right) \in A_{n-r}(Z)$.
	By applying \autoref{thm_blow_up_form}(ii) to both $s(Z;X)$ and $s(Z_1,\ldots,Z_m;X)$, we obtain 
	$$
		s^r(Z;X) \;=\; {(-1)^{r-1}}\,\eta_*\big((E_1+\cdots+E_m)^r\cdot [\mathcal{B}]\big) \;=\;  \sum_{i_1+\cdots+i_m = r}\frac{r!}{i_1!\cdots i_m!} \, s^{i_1,\ldots,i_m}\left(Z_1,\ldots,Z_m; X\right).
	$$
	So the proof of the corollary is complete.
\end{proof}

Next we study the behavior of mixed Segre classes under proper push-forward and flat pull-back (cf.~\cite[Proposition 4.2]{FULTON_INTERSECTION_THEORY}).
In particular, we obtain the birational invariance of mixed Segre classes.

\begin{theorem}
	\label{thm_funct_mixed_Segre}
	Assume \autoref{setup_kleiman_thorup}.
	Let $f : Y \rightarrow X$ be a morphism of equidimensional schemes and $Z_1' = f^{-1}(Z_1), \ldots, Z_m' = f^{-1}(Z_m)$, $Z' = f^{-1}(Z)$ be the inverse image schemes.
	Assume that each $Z_i$ is nowhere dense on $X$ and each $Z_i'$ is nowhere dense on $Y$. 
	Denote by $g : Z' \rightarrow Z$ the induced morphism.
	\begin{enumerate}[\rm (i)]
		\item {\rm(Proper push-forward)} 
		If $f$ is proper, $X$ is irreducible, and $f$ maps each irreducible component of $Y$ onto $X$, then 
		$$
		g_*\left(s(Z_1',\ldots,Z_m'; Y)\right) \;=\; \deg(Y/X) \, s(Z_1,\ldots,Z_m; X),
		$$ 
		where $\deg(Y/X) = \sum_{i=1}^{r}m_i \deg(Y_i/X)$, $Y_1,\ldots,Y_r$ are the irreducible components of $Y$, and $m_i$ is the geometric multiplicity of $Y_i$.
		\item {\rm(Flat pull-back)} If $f$ is flat of relative dimension $k$, then 
		$$
		g^*\left(s(Z_1,\ldots,Z_m; X)\right) \;=\; s(Z_1',\ldots,Z_m'; Y).
		$$
	\end{enumerate}
\end{theorem}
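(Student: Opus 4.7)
The plan is to reduce both parts to the blow-up formula of \autoref{rem_seg_fulton}, which (under the nowhere-density hypothesis) expresses $s(Z_1,\ldots,Z_m; X)$ as a sum over $i_1+\cdots+i_m \ge 1$ of terms of the form $(-1)^{i_1+\cdots+i_m-1}\,\eta_*\bigl(E_1^{i_1}\cdots E_m^{i_m} \cdot [\mathcal{B}]_n\bigr)\,t_1^{i_1}\cdots t_m^{i_m}$, and analogously for $s(Z_1',\ldots,Z_m'; Y)$ with $\mathcal{B}' = \Bl_{Z_1',\ldots,Z_m'}(Y)$, exceptional divisors $E_i'$, and $\eta' : E' \to Z'$. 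Since $Z_i' = f^{-1}(Z_i)$, the ideal sheaf of $Z_i'$ equals $\mathcal{J}_i \OO_Y$; the multi-Rees algebra description of $\mathcal{B}$ from \autoref{sect_mixed_Seg_blow_up} then yields a canonical morphism $\tilde{f} : \mathcal{B}' \to \mathcal{B}$ with $\tilde{f}^*\OO_{\mathcal{B}}(\ee_i) = \OO_{\mathcal{B}'}(\ee_i)$, or equivalently $E_i' = \tilde{f}^* E_i$ as Cartier divisors. Restricting $\tilde{f}$ over $E \subset \mathcal{B}$ yields $h : E' \to E$ fitting into a commutative square with $g, \eta, \eta'$.

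For part (i), the induced map $\tilde{f}$ is proper (since $b \circ \tilde{f} = f \circ b'$ is proper and $b$ is separated), and the main step is to establish $\tilde{f}_*\bigl([\mathcal{B}']_n\bigr) = \deg(Y/X)\,[\mathcal{B}]$. Because each $Z_i'$ is nowhere dense on $Y$, the joint blow-up $b' : \mathcal{B}' \to Y$ is birational on each irreducible component, so the $n$-dimensional components of $\mathcal{B}'$ are exactly the joint blow-ups $\mathcal{B}_i'$ of the $n$-dimensional components $Y_i$ of $Y$, with the same geometric multiplicities; each $\mathcal{B}_i' \to \mathcal{B}$ is then generically finite of degree $\deg(Y_i/X)$, yielding the identity. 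Iterated application of the projection formula for the Cartier divisors $E_i$ then gives
\[
\tilde{f}_*\bigl(E_1'^{i_1}\cdots E_m'^{i_m} \cdot [\mathcal{B}']_n\bigr) \;=\; \deg(Y/X)\, E_1^{i_1}\cdots E_m^{i_m} \cdot [\mathcal{B}]_n,
\]
and pushing forward along $\eta$ (using $g_*\eta'_* = \eta_* h_*$ from the commutative square) completes the proof.

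For part (ii), I would invoke that joint blow-ups commute with flat base change: pulling back $\sR_{\OO_X}(\mathcal{J}_1,\ldots,\mathcal{J}_m)$ along the flat morphism $f$ produces $\sR_{\OO_Y}(\mathcal{J}_1',\ldots,\mathcal{J}_m')$, so the square with vertices $\mathcal{B}', \mathcal{B}, Y, X$ is cartesian and $\tilde{f}$ is flat of relative dimension $k$ (as is $h$). In particular $[\mathcal{B}']_{n+k} = \tilde{f}^*[\mathcal{B}]_n$, and the compatibility of flat pull-back with first Chern classes of line bundles yields
\[
\tilde{f}^*\bigl(E_1^{i_1}\cdots E_m^{i_m} \cdot [\mathcal{B}]_n\bigr) \;=\; E_1'^{i_1}\cdots E_m'^{i_m} \cdot [\mathcal{B}']_{n+k}.
\]
Flat base change applied to the cartesian square $g \circ \eta' = \eta \circ h$ gives $g^*\eta_* = \eta'_* h^*$, and combining these identities with the blow-up formula for both sides delivers the desired equality.

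The main obstacle I anticipate is the bookkeeping of components and multiplicities in part (i): one must carefully justify that no spurious $n$-dimensional components of $\mathcal{B}'$ arise beyond the joint blow-ups of the $Y_i$, and that geometric multiplicities are transported correctly by the joint blow-up so that $\tilde{f}_*([\mathcal{B}']_n) = \deg(Y/X)\,[\mathcal{B}]$. In part (ii), the analogous subtlety is verifying that the $\fMProj$-construction of the joint blow-up commutes with flat base change; this is the multi-graded analogue of a standard fact for ordinary blow-ups and follows from the flatness of base change on the multi-Rees algebra, but it deserves a brief explicit argument.
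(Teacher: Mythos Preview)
Your proposal is correct and follows essentially the same route as the paper: both reduce to the blow-up formula (the paper uses \autoref{thm_blow_up_form}(ii) directly, you use its corollary \autoref{rem_seg_fulton}), construct the induced morphism between joint blow-ups with $E_i' = \tilde{f}^*E_i$, and then invoke the projection formula for part (i) and flat base change (\cite[Propositions 1.7, 2.3]{FULTON_INTERSECTION_THEORY}) for part (ii). The paper is terser on exactly the points you flag as obstacles---it simply asserts $F_*\bigl([\Bl_{Z'}(Y)]\bigr) = d\,[\Bl_Z(X)]$ and that $F,G$ are flat---so your more detailed justification of the component/multiplicity bookkeeping and of flat base change for the multi-Rees construction is a welcome expansion rather than a deviation.
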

\begin{proof}
	Let $\Bl_{Z'}(Y) \cong \Bl_{Z_1',\ldots,Z_m'}(Y)$ be the blow-up of $Y$ along $Z'$ (equivalently, the joint blow-up of $Y$ along $Z_1',\ldots,Z_m'$).
	Let $E_i'$ be the pull-back to $\Bl_{Z'}(Y)$ of the exceptional divisor of the blow-up $\Bl_{Z_i'}(Y)$.
	Consider the divisor $E':=E_1'+\cdots+E_m'$ and the natural projection $\eta' : E' \rightarrow Z'$.
 	We have an induced morphism $F : \Bl_{Z'}(Y) \rightarrow \Bl_Z(X)$ such that $F^*E_i = E_i'$ as Cartier divisors.
 	Let $G : E' \rightarrow E$ be the induced morphism.
	Since each $Z_i$ is nowhere dense on $X$ and each $Z_i'$ is nowhere dense on $Y$, we may restrict to mixed Segre classes of type $(i_1,\ldots,i_m)$ with $i_1+\cdots+i_m > 0$.
	
	(i) In this case, $F$ and $G$ are proper and $F_*\left(\left[\Bl_{Z'}(Y)\right]\right) = d \left[\Bl_Z(X)\right]$ where $d = \deg(Y/X)$.
	By \autoref{thm_blow_up_form}(ii) and the projection formula (see \cite[Proposition 2.3]{FULTON_INTERSECTION_THEORY}), we obtain 
	\begin{align*}
	g_*\left(s^{i_1,\ldots,i_m}(Z_1',\ldots,Z_m'; Y)\right) &\;=\; g_*\left({(-1)}^{i_1+\cdots+i_m-1}\;\eta_*'\left((E_1')^{i_1}\cdots (E_m')^{i_m} \cdot \big[\Bl_{Z'}(Y)\big]\right)\right) \\
	&\;=\; {(-1)}^{i_1+\cdots+i_m-1}\; \eta_*\left(G_*\left((E_1')^{i_1}\cdots (E_m')^{i_m} \cdot \big[\Bl_{Z'}(Y)\big]\right)\right) \\
	&\;=\; d\,{(-1)}^{i_1+\cdots+i_m-1}\; \eta_*\left(E_1^{i_1}\cdots E_m^{i_m} \cdot \big[\Bl_{Z}(X)\big]\right) \\
	&\;=\; d\, s\left(Z_1,\ldots,Z_m; X\right). 
	\end{align*}
	
	(ii) Now $F$ and $G$ are both flat.
	From \autoref{thm_blow_up_form}(ii), \cite[Proposition 1.7]{FULTON_INTERSECTION_THEORY} and \cite[Proposition 2.3]{FULTON_INTERSECTION_THEORY}, we get
	\begin{align*}
	g^*\left(s^{i_1,\ldots,i_m}(Z_1,\ldots,Z_m; X)\right) &\;=\; g^*\left({(-1)}^{i_1+\cdots+i_m-1}\; \eta_*\left(E_1^{i_1}\cdots E_m^{i_m} \cdot \big[\Bl_{Z}(X)\big]\right)\right) \\
	&\;=\; {(-1)}^{i_1+\cdots+i_m-1}\;\eta_*'\left(G^*\left(E_1^{i_1}\cdots E_m^{i_m} \cdot \big[\Bl_{Z}(X)\big]\right)\right) \\
	&\;=\; {(-1)}^{i_1+\cdots+i_m-1}\;\eta_*'\left((E_1')^{i_1}\cdots (E_m')^{i_m} \cdot \big[\Bl_{Z'}(Y)\big]\right) \\
	&\;=\; s^{i_1,\ldots,i_m}(Z_1',\ldots,Z_m'; Y).
	\end{align*}
	This concludes the proof of the theorem.
\end{proof}

\subsection{Computing mixed Segre classes of subschemes of $\PP_\kk^n$}

In this subsection, we restrict to the case of a projective space.
Our goal is to provide an algorithm to compute mixed Segre classes, thus extending an algorithm of Aluffi \cite{ALUFFI_COMPUTING} to compute Segre classes (also, see \cite{HELMER_ALGO, EJP}).  
Now we utilize the following more specialized setting. 

\begin{setup}
	\label{setup_computing}
	Assume \autoref{setup_kleiman_thorup} with $X = \PP_\kk^n$.
	Let $R = \kk[x_0,\ldots,x_n]$ be the homogeneous coordinate ring of $\PP_\kk^n$.	
	For each $1 \le i \le m$, let $I_i = \left(f_{i,0},f_{i,1},\ldots,f_{i,r_i}\right) \subset R$ be a homogeneous ideal generated by $r_i+1$ forms of degree $d_i \ge 1$ such that $Z_i = V(I_i) \subset \PP_\kk^n$.\footnote{Assuming each $I_i$ is generated in a single degree is not restrictive. 
	For instance, if $J \subset R$ is homogeneous ideal, any truncation $J_{\ge d} = \bigoplus_{j \ge d}J_j$ defines the exact same closed subscheme in $\PP_\kk^n$.}
\end{setup}

Let $\Gamma := \Gamma_{I_1,\ldots,I_m} \subset \PP_\kk^n \times_\kk \PP_\kk^{r_1} \times_\kk \cdots \times_\kk \PP_\kk^{r_m}$  be the (closure) of the graph of the rational map
$$
\Phi_{I_1,\ldots,I_m} \,:\, \PP_\kk^n \;\dashrightarrow\; \PP_\kk^{r_1} \times_\kk \cdots \times_\kk \PP_\kk^{r_m}
$$
determined by the forms $f_{i,j}$.
Let $H$ be the class of a hyperplane in $\PP_\kk^n$ and $K_i$ be the class of a hyperplane in $\PP_\kk^{r_i}$.
Denote also by $H$ and $K_i$ their pullbacks to $\PP_\kk^n \times_\kk \PP_\kk^{r_1} \times_\kk \cdots \times_\kk \PP_\kk^{r_m}$.
As before, let  $b : \mathcal{B} = \Bl_{Z_1,\ldots,Z_m }\big(\PP_\kk^n\big)\rightarrow \PP_\kk^n$ be the joint blow-up and  $E_i$ be the pull-back to $\mathcal{B}$ of the exceptional divisor of the blow-up $\Bl_{Z_i}(\PP_\kk^n)$ of $X$ along $Z_i$.
We have a natural isomorphism $\Gamma \cong \mathcal{B}$ with the identification 
$$
\OO_\Gamma(\ee_i) \;\cong\; \OO_\mathcal{B}(\ee_i) \,\otimes\, \OO_{\PP_\kk^n}(d_i). 
$$
It then follows that $E_i = d_iH - K_i$.
The \emph{projective degrees} of the rational map $\Phi:=\Phi_{I_1,\ldots,I_m}$ are given by 
$$
d_{i_1,\ldots,i_m}\left(\Phi\right)  \;:=\; \int H^{n-i_1-\cdots-i_m} K_1^{i_1}\cdots K_m^{i_m} \cdot \left[\Gamma\right] 
$$
for each $i_1,\ldots,i_m \in \NN$.
The following corollary yields an explicit description of the mixed Segre class $\iota_*\left(s(Z_1,\ldots,Z_m;\PP_\kk^n)\right)$ in terms of the projective degrees of $\Gamma$.

\begin{corollary}[{cf.~\cite[Proposition 3.1]{ALUFFI_COMPUTING}}]
	\label{cor_computing}
	Assume \autoref{setup_computing} and the notations above.
	Then we have the equality
	$$
	\iota_*\left(s\left(Z_1,\ldots,Z_m;\PP_\kk^n\right) \right) \;=\; \left(1 - \sum_{i_1=0}^{r_1}\cdots\sum_{i_m=0}^{r_m} \frac{d_{i_1,\ldots,i_m}(\Phi)\, H^{i_1+\cdots+i_m}\,t_1^{i_1}\cdots t_m^{i_m}}{\left(1+d_1Ht_1\right)^{i_1+1}\cdots \left(1+d_mHt_m\right)^{i_m+1}}\right) \,\smallfrown\, \left[\PP_\kk^n\right].
	$$
\end{corollary}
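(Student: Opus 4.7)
The plan is to apply \autoref{thm_blow_up_form}(i) and then carry out a direct computation in the Chow ring of the ambient product $\PP_\kk^n \times_\kk \PP_\kk^{r_1} \times_\kk \cdots \times_\kk \PP_\kk^{r_m}$. Since $X = \PP_\kk^n$ is irreducible of dimension $n$, we have $[\Bl_Z(\PP_\kk^n)]_n = [\mathcal{B}]$, so \autoref{thm_blow_up_form}(i) gives
\begin{equation*}
\iota_*\!\left(s\left(Z_1,\ldots,Z_m;\PP_\kk^n\right)\right) \;=\; [\PP_\kk^n] \;-\; b_*\left(\prod_{i=1}^m \frac{1}{1+E_it_i}\,\smallfrown\,[\mathcal{B}]\right).
\end{equation*}
Using the isomorphism $\mathcal{B} \cong \Gamma$ together with $E_i = d_iH - K_i$, the task reduces to expanding $\prod_i (1+E_it_i)^{-1}$ on $\Gamma$ and pushing forward to $\PP_\kk^n$ via $b$.

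The key algebraic step is to write
\begin{equation*}
1 + (d_iH - K_i)t_i \;=\; (1+d_iHt_i)\left(1 - \frac{K_it_i}{1+d_iHt_i}\right).
\end{equation*}
Because $K_i^{r_i+1}$ vanishes in the Chow ring of $\PP_\kk^{r_i}$ (and hence on $\Gamma$ via pullback from the $i$-th factor), the geometric series
\begin{equation*}
\frac{1}{1 - \frac{K_it_i}{1+d_iHt_i}} \;=\; \sum_{j_i=0}^{r_i} \frac{K_i^{j_i} t_i^{j_i}}{(1+d_iHt_i)^{j_i}}
\end{equation*}
terminates. Multiplying across $i = 1, \ldots, m$ yields
\begin{equation*}
\prod_{i=1}^m \frac{1}{1+E_it_i}\,\smallfrown\,[\Gamma] \;=\; \sum_{j_1=0}^{r_1}\cdots\sum_{j_m=0}^{r_m} \frac{K_1^{j_1}\cdots K_m^{j_m}\, t_1^{j_1}\cdots t_m^{j_m}}{\prod_i(1+d_iHt_i)^{j_i+1}}\,\smallfrown\,[\Gamma].
\end{equation*}

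To finish, I would apply $b_*$ term by term. Since $H$ is pulled back from $\PP_\kk^n$ via $b$, the projection formula allows the rational expression in $H$ to pass outside of $b_*$, reducing the problem to computing $b_*(K_1^{j_1}\cdots K_m^{j_m} \smallfrown [\Gamma]) \in A_{n-j_1-\cdots-j_m}(\PP_\kk^n)$. On $\PP_\kk^n$, every class of that dimension is an integer multiple of $H^{j_1+\cdots+j_m}\,\smallfrown\,[\PP_\kk^n]$; capping with $H^{n-j_1-\cdots-j_m}$ and applying the projection formula once more identifies that integer as $d_{j_1,\ldots,j_m}(\Phi)$. Assembling the pieces produces the stated formula.

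The main subtlety is bookkeeping: one must carefully distinguish the classes supported on $\Gamma$ from those on $\PP_\kk^n$, apply the projection formula only to the $H$-factors (which are genuine pullbacks via $b$) and not to the $K_i$-factors, and observe that summands with $j_1+\cdots+j_m > n$ vanish automatically since $H^{n+1}\,\smallfrown\,[\PP_\kk^n] = 0$. Once these points are handled, the computation is essentially a finite geometric-series expansion combined with a dimension count on $\PP_\kk^n$.
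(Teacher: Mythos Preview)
Your proof is correct and follows essentially the same approach as the paper: start from \autoref{thm_blow_up_form}(i), substitute $E_i = d_iH - K_i$, expand $\prod_i (1+E_it_i)^{-1}$, and push forward via the projection formula using $K_i^{r_i+1}=0$. The only difference is cosmetic: your factorization $1+E_it_i = (1+d_iHt_i)\bigl(1 - \tfrac{K_it_i}{1+d_iHt_i}\bigr)$ reaches the intermediate expression $\sum_{j_i}\frac{K_i^{j_i}t_i^{j_i}}{(1+d_iHt_i)^{j_i+1}}$ more directly than the paper's binomial expansion and resummation via the identity $\sum_{j\ge 0}(-1)^j\binom{j+k}{k}x^j = (1+x)^{-k-1}$.
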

\begin{proof}
	Let $\underline{Z}=Z_1,\ldots,Z_m$.
	Due to \autoref{thm_blow_up_form} and the equality $E_i = d_iH-K_i$, we have 
	$$
	\iota_*\left(s\left(\underline{Z};\PP_\kk^n\right) \right) \;=\; \left[\PP_\kk^n\right] - b_*\left(\prod_{i=1}^m\frac{1}{\big(1+d_iHt_i-K_it_i\big)}\, \smallfrown\, [\mathcal{B}]\right). 
	$$
	Then we can make the following simple algebraic manipulations
	\begin{align*}
		\prod_{i=1}^m\frac{1}{\big(1+d_iHt_i-K_it_i\big)} \,\smallfrown\, [\mathcal{B}] &\;=\;  \prod_{i=1}^m \left(\sum_{\ell=0}^\infty {(-1)}^\ell \left(d_iHt_i-K_it_i\right)^\ell\right) \,\smallfrown\, [\mathcal{B}] \\
		&\;=\;  \prod_{i=1}^m \left(\sum_{\ell=0}^\infty {(-1)}^\ell \sum_{k=0}^{\ell} \binom{\ell}{k} \left(d_iHt_i\right)^{\ell-k}\left(-K_it_i\right)^k\right) \,\smallfrown\, [\mathcal{B}] \\
		&\;=\;  \prod_{i=1}^m \left(\sum_{k=0}^\infty  \left(\sum_{\ell=k}^{\infty}{(-1)}^{\ell-k} \binom{\ell}{k} \left(d_iHt_i\right)^{\ell-k}\right)K_i^kt_i^k\right) \,\smallfrown\, [\mathcal{B}] \\
		&\;=\;  \prod_{i=1}^m \left(\sum_{k=0}^\infty  \left(\sum_{j=0}^{\infty}{(-1)}^{j} \binom{j+k}{k} \left(d_iHt_i\right)^{j}\right)K_i^kt_i^k\right) \,\smallfrown\, [\mathcal{B}] \\
		&\;=\;  \prod_{i=1}^m \left(\sum_{k=0}^\infty  \frac{K_i^kt_i^k}{\left(1+d_iHt_i\right)^{k+1}}\right) \,\smallfrown\, [\mathcal{B}] \\
		&\;=\; \sum_{i_1,\ldots,i_m\in \NN} \frac{K_1^{i_1}\cdots K_m^{i_m}t_1^{i_1}\cdots t_m^{i_m}}{\left(1+d_1Ht_1\right)^{i_1+1}\cdots \left(1+d_mHt_m\right)^{i_m+1}} \,\smallfrown\, [\mathcal{B}].
	\end{align*}
	By the projection formula and the fact that $K_j^{i_j} \cdot [\mathcal{B}] =0$ for $i_j > r_j$, we obtain 
	$$
	\iota_*\left(s\left(\underline{Z};\PP_\kk^n\right) \right) \;=\; \left(1 - \sum_{i_1=0}^{r_1}\cdots\sum_{i_m=0}^{r_m} \frac{d_{i_1,\ldots,i_m}(\Phi)\, H^{i_1+\cdots+i_m}\,t_1^{i_1}\cdots t_m^{i_m}}{\left(1+d_1Ht_1\right)^{i_1+1}\cdots \left(1+d_mHt_m\right)^{i_m+1}}\right) \;\smallfrown\; \left[\PP_\kk^n\right].
	$$
	So the proof of the corollary is complete.
\end{proof}

\begin{remark}
	Notice that the formula of \autoref{cor_computing} can be rewritten as 
	$$
	\iota_*\left(s\left(Z_1,\ldots,Z_m;\PP_\kk^n\right) \right) \;=\; \left(1 - \frac{Q(t_1,\ldots,t_m)}{\left(1+d_1Ht_1\right)^{r_1+1}\cdots \left(1+d_mHt_m\right)^{r_m+1}}\right) \,\smallfrown\, \left[\PP_\kk^n\right]
	$$
	where $Q(\ttt)=Q(t_1,\ldots,t_m)$ is the formal polynomial
	$$
	Q(\ttt) \;=\;  \sum_{i_1=0}^{r_1}\cdots\sum_{i_m=0}^{r_m} d_{i_1,\ldots,i_m}(\Phi)\, H^{i_1+\cdots+i_m}\,t_1^{i_1}\cdots t_m^{i_m}\left(1+d_1Ht_1\right)^{r_1-i_1}\cdots \left(1+d_mHt_m\right)^{r_m-i_m}.
	$$
This already gives an indication of the rationality of mixed Segre zeta functions (see \autoref{thm_rationality}). 
Indeed, notice that the formal polynomial $Q(\ttt)$ will not change if we consider $Z_1,\ldots,Z_m$ in a larger projective space $\PP_\kk^N$ by extending the ideals $I_1,\ldots,I_m$ to $\kk[x_0,\ldots,x_n,x_{n+1},\ldots,x_N]$.
\end{remark}

\section{Functoriality of mixed Segre classes under projections}
\label{sect_funct_Segre}
	
In this section, we study the pull-back of mixed Segre classes under certain rational maps. 
This technical consideration will be necessary to show the rationality of mixed Segre zeta functions in the subsequent  \autoref{sect_mixed_Seg_zeta}.

\begin{setup}
    \label{setup_functorial}
    Let $X$ and $Y$ be two irreducible varieties over a field $\kk$.	
    Let $\phi : X \dashrightarrow Y$ be a dominant rational map, $U \subset X$ be the domain of definition of $\phi$, and  $f : U \rightarrow Y$ be the associated morphism.
    Let $N = \dim(X)$ and $n = \dim(Y)$.
    Let $B_\phi \subset X$ be a closed subscheme such that the blow-up of $X$ along $B_\phi$ 
\begin{equation*}
		\begin{tikzpicture}[baseline=(current  bounding  box.center)]
			\matrix (m) [matrix of math nodes,row sep=3em,column sep=4.5em,minimum width=2em, text height=1.5ex, text depth=0.25ex]
			{
				 & \Gamma_\phi = \Bl_{B_\phi}(X) &  \\
				X & & Y\\
			};
			\path[-stealth]
			(m-1-2) edge node [above] {$\pi_1$} (m-2-1)
			(m-1-2) edge node [above] {$\pi_2$} (m-2-3);
			\draw[->,dashed] (m-2-1)--(m-2-3) node [midway,above] {$\phi$};	
			;		
		\end{tikzpicture}	
	\end{equation*}
	resolves the indeterminacies of $\phi$ and assume that the projection $\pi_2 \colon \Gamma_\phi \rightarrow Y$ is a flat morphism.
	Thus $f : U \cong \pi_1^{-1}(U) \rightarrow Y$ is also a flat morphism.
We say that \emph{$\phi$ is a projection with base locus $B_\phi$}.
Let $m \ge 1$ be a positive integer. 
	For each $1 \le i \le m$, let $r_i \ge 0$ be a nonnegative integer, $\EE_i$ be a vector bundle on $X$ of rank $r_i+1$, and $\FF_i$ be a vector bundle on $Y$ of rank $r_i+1$.
	We assume that $\EE_i$ and $\FF_i$ are \emph{compatible} with respect to $\phi$; this means that we get the following isomorphism
\begin{equation*}
    %\label{eq_compatible}
    {\EE_i\mid}_{U}  \;\cong\; f^*\FF_i
\end{equation*}    
by restricting to $U$.
We also assume the existence of a  nonzero section $s_{\EE_i} \in \HH^0(X, \EE_i)$ of $\EE_i$ and a nonzero section $s_{\FF_i} \in \HH^0(Y, \FF_i)$ of $\FF_i$ with the \emph{compatibility} that the following diagram commutes
\begin{equation*}
		\begin{tikzpicture}[baseline=(current  bounding  box.center)]
			\matrix (m) [matrix of math nodes,row sep=3em,column sep=4.5em,minimum width=2em, text height=1.5ex, text depth=0.25ex]
			{
			 	\fSpec_U\left( \Sym\left( \left({\EE_i\mid}_U\right)^\vee \right) \right)& &  \fSpec_U \left( \Sym \left( \left(f^*\FF_i\right)^\vee \right) \right)\\
				 & U & \\
			};
			\path[-stealth]
			(m-2-2) edge node [below] {${s_{\EE_i}|}_U\;$} (m-1-1)
			(m-2-2) edge node [below] {$\;\quad f^*{s_{\FF_i}}$} (m-1-3)
			;		
			\draw[<->] (m-1-1)--(m-1-3) node [midway,above] {$\cong$};	
		\end{tikzpicture}	
\end{equation*}
	when restricting to $U$.
    Let $Z_i = Z(s_{\EE_i}) \subset X$ be the zero-scheme of $s_{\EE_i}$ and $W_i = Z(s_{\FF_i}) \subset Y$ be the zero-scheme of $s_{\FF_i}$.
    Let $Z \subset X$ and $W \subset Y$ be the closed subschemes defined by the ideal sheaves $\mathcal{I}_Z = \mathcal{I}_{Z_1} \cdots \mathcal{I}_{Z_m} \subset \OO_X$ and $\mathcal{I}_W = \mathcal{I}_{W_1} \cdots \mathcal{I}_{W_m} \subset \OO_Y$.
    In this subsection, we shall consider the immersion morphisms of several subschemes; thus for any subscheme $S' \subset S$ of a scheme $S$, we shall denote by $j_{S'} : S' \hookrightarrow S$ the corresponding immersion.
\end{setup}

We now present a notion of \emph{join} for Chow classes that was considered by Aluffi \cite[\S 3]{aluffi2017segre}.

\begin{definition}
	\label{def_join}
Let $C \in A_*(Y)$ be a Chow class on $Y$.
The \emph{join} of $C$ with respect to the base locus $B_\phi$ is the class given by 
$$
C \, \vee \, B_\phi \;:=\; {\pi_1}_*\left(\pi_2^*\left(C\right)\right) \;\in\; A_{*}(X).
$$
For any formal polynomial $C(\ttt) = \sum_{\alpha \in \NN^m} C_\alpha\, t_1^{\alpha_1}\cdots t_m^{\alpha_m} \in A_*(Y)\left[t_1,\ldots,t_m\right]$ in the variables $t_1,\ldots,t_m$ and with coefficients in $A_*(Y)$, we set 
$$
C(\ttt) \, \vee \, B_\phi \;:=\; \sum_{\alpha \in \NN^m} \left(C_\alpha \vee B_\phi\right)\, t_1^{\alpha_1}\cdots t_m^{\alpha_m} \in A_*(X)\left[t_1,\ldots,t_m\right].
$$
\end{definition}

\begin{remark}
\label{rem_uniq_join}
Let $C \in A_d(Y)$ be a class of dimension $d$.
We point out that if $\dim(B_\phi) < d+N-n$, then the exact sequence of Chow groups for an open embedding 
$$
A_{d+N-n}(B_\phi) \;\xrightarrow{\;{j_{B_\phi}}_*\;}\; A_{d+N-n}(X) \;\xrightarrow{\;j_{U}^*\;}\; A_{d+N-n}(U) \;\rightarrow\; 0
$$
(see \cite[\S 1.8]{FULTON_INTERSECTION_THEORY}) implies that the join $C \,\vee \, B_\phi$ is the \emph{unique} class in $A_{d+N-n}(X)$ whose restriction to $U$ coincides with the pull-back of $C$ via $f$; that is,
$$
j_U^*\left(C \,\vee \, B_\phi\right) \;=\; f^*\left(C\right).
$$
This basic observation will play an important role in our approach.
The inequality $\dim(B_\phi) < d+N-n$ will hold in the situations we consider later.
\end{remark}	

Let $\PP(\EE_i) := \fProj_X\left(\Sym(\EE_i^\vee)\right)$ and $\PP(\FF_i) := \fProj_Y\left(\Sym(\FF_i^\vee)\right)$ be the corresponding projective bundles (we use the notation of Fulton \cite{FULTON_INTERSECTION_THEORY} with respect to projective bundles). 
We consider the following products of projective bundles 
$$
\PP\left(\underline{\EE}\right) \;:=\; \PP(\EE_1) \times_X \cdots \times_X \PP(\EE_m)
\quad \text{ and } \quad
\PP\left(\underline{\FF}\right) \;:=\; \PP(\FF_1) \times_Y \cdots \times_Y \PP(\FF_m).
$$
Let $q_1 : \PP\left(\underline{\EE}\right) \rightarrow X$ and $q_2 : \PP\left(\underline{\FF}\right) \rightarrow Y$ be the natural projections. 
The joint blow-ups $\Bl_{Z_1,\ldots,Z_m}(X)$ and $\Bl_{W_1,\ldots,W_m}(Y)$ can be seen as (the closures) of the images of the induced rational maps 
$$
s_{\underline{\mathcal{E}}}=(s_{\EE_1}, \ldots,s_{\EE_m}) \;:\; X  \;\dashrightarrow\; \PP\left(\underline{\EE}\right) = \PP(\EE_1) \times_X \cdots \times_X \PP(\EE_m)
$$
and 
$$
s_{\underline{\mathcal{F}}}=(s_{\FF_1}, \ldots,s_{\FF_m}) \;:\; Y\;\dashrightarrow\; \PP\left(\underline{\FF}\right) = \PP(\FF_1) \times_Y \cdots \times_Y \PP(\FF_m),
$$
respectively.

Our compatibility assumptions yield the following fibre square
\begin{equation}
\label{eq_fibre_sq_f_g}
		\begin{tikzpicture}[baseline=(current  bounding  box.center)]
			\matrix (m) [matrix of math nodes,row sep=3em,column sep=7em,minimum width=2em, text height=1.5ex, text depth=0.25ex]
			{
			 	\UU\;:=\; \PP\left({\underline{\EE}|}_U\right) \;\cong\; \PP\left(f^*\left(\underline{\FF}\right)\right) &  \PP(\underline{\FF})\\
				  U & Y.\\
			};
			\path[-stealth]
			(m-1-1) edge node [left] {$q_1$} (m-2-1)
                (m-1-2) edge node [right] {$q_2$} (m-2-2)
                (m-1-1) edge node [above] {$g$}(m-1-2)
                (m-2-1) edge node [above] {$f$}(m-2-2)
			;
		\end{tikzpicture}	
\end{equation}
where ${\underline{\EE}|}_U = {\EE_1|}_U,\ldots,{\EE_m|}_U$, $f^*\left(\underline{\FF}\right) = f^*(\FF_1),\ldots,f^*(\FF_m)$ and $g : \UU \rightarrow \PP(\underline{\FF})$ is also flat.
By an abuse of notation, we also denote by $q_1 : \mathbb{U} \rightarrow U$ the restriction of the projection $q_1 :\PP\left(\underline{\EE}\right) \rightarrow X$ to $\mathbb{U}$.
Thus we obtain a rational map
$$
\Psi \;:\; \PP(\underline{\EE}) \;\dashrightarrow\; \PP(\underline{\FF}) 
$$
with base locus $B_\Psi = B_\phi \times_X \PP(\underline{\EE})$.
The blow-up of $\PP(\underline{\EE})$ along $B_\Psi$ is given by 
\begin{equation*}
%\label{eq_fib_prod_graph}
\Gamma_\Psi \;\cong\; \Gamma_\phi \times_X \PP(\underline{\EE})
\end{equation*}
because 
$$
\Rees_{\OO_X}\left(\mathcal{I}_{B_\phi}\right) \otimes_{\OO_X} \OO_{\PP(\underline{\EE})} \;\xrightarrow{\;\cong\;}\; \Rees_{\OO_{\PP(\underline{\EE})}}\left(\mathcal{I}_{B_\phi}\OO_{\PP(\underline{\EE})}\right) \;\cong\; \Rees_{\OO_{\PP(\underline{\EE})}}\left(\mathcal{I}_{B_\Psi}\right).
$$
By combining everything, we obtain the following commutative diagram that encodes all the data
\begin{equation*}
%\label{eq_large_diag}
\begin{tikzpicture}[baseline=(current  bounding  box.center)]
			\matrix (m) [matrix of math nodes,row sep=2.5em,column sep=4.5em,minimum width=2em, text height=1.5ex, text depth=0.25ex]
			{
			  \PP(\underline{\EE}) & & \PP(\underline{\FF}) \\ 
              & \Gamma_\Psi = \Bl_{B_\Psi}(\PP(\underline{\EE})) &  \\
                 & \Gamma_\phi = \Bl_{B_\phi}(X) &  \\
				X & & Y.\\
			};
			\path[-stealth]
	(m-3-2) edge node [above] {$\pi_1$} (m-4-1)
	(m-3-2) edge node [above] {$\pi_2$} (m-4-3)
        (m-2-2) edge node [above] {$p_1$} (m-1-1)
	(m-2-2) edge node [above] {$p_2$} (m-1-3)
        (m-2-2) edge node [left] {$\rho$} (m-3-2)
        (m-1-1) edge node [left] {$q_1$} (m-4-1)
        (m-1-3) edge node [right] {$q_2$} (m-4-3)
            ;
		\draw[->,dashed] (m-4-1)--(m-4-3) node [midway,above] {$\phi$};	
		\draw[->,dashed] (m-1-1)--(m-1-3) node [midway,above] {$\Psi$};	
		\end{tikzpicture}	
\end{equation*}
It only remains to show that the trapezoid on the right commutes; however, the equality $q_2 \circ p_2 = \pi_2 \circ \rho$ holds since these two compositions of morphisms agree on the complement of the exceptional divisor of $\Gamma_\Psi$ (see \cite[Exercise II.4.2]{HARTSHORNE}).

\begin{lemma}
    \label{lem_join_blow_ups}
    Consider the Chow classes $\left[\Bl_{Z_1,\ldots,Z_m}(X)\right] \in A_N\left(\PP(\underline{\EE})\right)$ and $\left[\Bl_{W_1,\ldots,W_m}(Y)\right] \in A_n\left(\PP(\underline{\FF})\right)$.
    Assume that $\dim\left(B_\phi\right) < N - \sum_{i=1}^m r_i$.
    Then $\left[\Bl_{Z_1,\ldots,Z_m}(X)\right]$ is the unique class in $A_N\left(\PP(\underline{\EE})\right)$ such that 
    $$
    j_\UU^*\big(\left[\Bl_{Z_1,\ldots,Z_m}(X)\right]\big) \;=\; g^*\big(\left[\Bl_{W_1,\ldots,W_m}(Y)\right]\big).
    $$
\end{lemma}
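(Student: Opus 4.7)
The plan is to split the statement into its uniqueness and existence halves. First I would establish uniqueness: the hypothesis $\dim(B_\phi) < N - \sum_{i=1}^m r_i$, combined with the fact that $X \setminus U \subseteq B_\phi$ (since the blow-up along $B_\phi$ resolves the indeterminacies of $\phi$), gives
$$
\dim\big(\PP(\underline{\EE}) \setminus \UU\big) \;=\; \dim\big(q_1^{-1}(X \setminus U)\big) \;\le\; \dim(B_\phi) + \sum_{i=1}^m r_i \;<\; N,
$$
because $q_1$ is a product of projective bundles of total relative dimension $\sum_{i=1}^m r_i$. Consequently $A_N\big(\PP(\underline{\EE}) \setminus \UU\big) = 0$, so the localization exact sequence for open immersions forces $j_\UU^* : A_N\big(\PP(\underline{\EE})\big) \to A_N(\UU)$ to be injective, which is exactly the uniqueness statement.

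For the existence half I would verify that $j_\UU^*\big[\Bl_{Z_1,\ldots,Z_m}(X)\big] = g^*\big[\Bl_{W_1,\ldots,W_m}(Y)\big]$ in $A_N(\UU)$. The first key observation is that $f : U \to Y$ is flat: the blow-up projection $\pi_1$ restricts to an isomorphism $\pi_1^{-1}(U) \xrightarrow{\sim} U$, and $f$ is the composition of this isomorphism with the restriction of the flat morphism $\pi_2$. The compatibility $\EE_i|_U \cong f^*\FF_i$, together with the compatibility between the sections $s_{\EE_i}|_U$ and $f^*s_{\FF_i}$ built into \autoref{setup_functorial}, yields $\mathcal{I}_{Z_i}|_U = f^{-1}\mathcal{I}_{W_i} \cdot \OO_U$, and flatness of $f$ upgrades the natural surjection $f^*\mathcal{I}_{W_i} \twoheadrightarrow \mathcal{I}_{W_i}\OO_U$ to an isomorphism. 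This extends to an isomorphism of multi-Rees algebras
$$
f^*\Rees_{\OO_Y}\big(\mathcal{I}_{W_1},\ldots,\mathcal{I}_{W_m}\big) \;\cong\; \Rees_{\OO_U}\big(\mathcal{I}_{Z_1}|_U,\ldots,\mathcal{I}_{Z_m}|_U\big).
$$
Applying $\fMProj$ over $U$ (which commutes with base change) and using the compatibility of the embeddings of these Rees algebras into the symmetric algebras $\Sym(\underline{\EE}^\vee)$ and $\Sym(\underline{\FF}^\vee)$, the fibre square relating $f$ and $g$ yields the scheme-theoretic identity $\Bl_{Z_1,\ldots,Z_m}(X) \cap \UU \;=\; \UU \times_{\PP(\underline{\FF})} \Bl_{W_1,\ldots,W_m}(Y)$ inside $\UU$. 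Since $g$ is flat, passing to cycle classes gives the desired equality.

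The main obstacle I anticipate is the precise matching of the two embeddings $\Bl_{Z_1,\ldots,Z_m}(X) \hookrightarrow \PP(\underline{\EE})$ and $\Bl_{W_1,\ldots,W_m}(Y) \hookrightarrow \PP(\underline{\FF})$ under pullback by $g$; unwinding the universal property of the blow-ups as targets of the rational sections $s_{\underline{\EE}}$ and $s_{\underline{\FF}}$, via the compatibility axioms of \autoref{setup_functorial} and the flatness of $f$, is what makes this identification go through. Once that identification is in place, the injectivity of $j_\UU^*$ from the first paragraph delivers uniqueness, so both halves of the lemma follow.
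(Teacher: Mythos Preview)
Your proof is correct and follows essentially the same approach as the paper: both arguments establish the equality $j_\UU^*[\Bl_{Z_1,\ldots,Z_m}(X)] = g^*[\Bl_{W_1,\ldots,W_m}(Y)]$ from the compatibility assumptions of \autoref{setup_functorial} and then deduce uniqueness from the localization exact sequence together with the dimension bound $\dim(B_\Psi) = \dim(B_\phi) + \sum_i r_i < N$. The paper dispatches the existence half in one line by invoking compatibility, whereas you unpack it via flatness of $f$ and an explicit identification of multi-Rees algebras; one small caution is that your claim $\pi_1^{-1}(U)\xrightarrow{\sim} U$ needs $B_\phi\cap U=\emptyset$, which is not literally forced by the setup, but replacing $U$ by $X\setminus B_\phi\subseteq U$ (as the paper implicitly does when it computes $\dim(B_\Psi)$) fixes this without affecting the rest of the argument.
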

\begin{proof}
Here we consider the rational map $\Psi : \PP(\underline{\EE}) \dashrightarrow \PP(\underline{\FF})$.
By our compatibility assumptions (see \autoref{setup_functorial}), we have
$$
j_\UU^*\big(\left[\Bl_{Z_1,\ldots,Z_m}(X)\right]\big) \;=\; g^*\big(\left[\Bl_{W_1,\ldots,W_m}(Y)\right]\big).
$$
Since $\dim\left(B_\Psi\right) = \dim\left(B_\phi\right) + \sum_{i=1}^mr_i < N$, the uniqueness claim follows by utilizing the exact sequence of Chow groups for an open embedding \cite[\S 1.8]{FULTON_INTERSECTION_THEORY}.
\end{proof}

The total Chern class of $\underline{\EE}=\EE_1,\ldots,\EE_m$ is given by 
$$
c_\ttt(\underline{\EE}) \;:=\; c_{t_1}(\EE_1) \cdots c_{t_m}(\EE_m) \;=\; \sum_{i_1,\ldots,i_m \in \NN} c_{i_1}(\EE_1) \cdots c_{i_m}(\EE_m) \, t_1^{i_1}\cdots t_m^{i_m} 
$$
and the total Segre class is given by 
$$
s_\ttt(\underline{\EE}) \;:=\; s_{t_1}(\EE_1) \cdots s_{t_m}(\EE_m) \;=\; \sum_{i_1,\ldots,i_m \in \NN} s_{i_1}(\EE_1) \cdots s_{i_m}(\EE_m) \, t_1^{i_1}\cdots t_m^{i_m}. 
$$
We regard both $c_\ttt(\underline{\mathcal{E}})$ and $s_\ttt(\underline{\mathcal{E}})$ as operators over the formal polynomials in $A_*(X)[t_1,\ldots,t_m]$.
Similarly, we consider the total Chern class $c_\ttt(\underline{\mathcal{F}})$ and the total Segre class $s_\ttt(\underline{\mathcal{F}})$ of $\underline{\FF}=\FF_1,\ldots,\FF_m$.
We are interested on the following total classes
$$
L_\ttt\left(\underline{Z};X\right) \;:=\; c_\ttt(\underline{\EE}) \,\smallfrown\, {q_1}_*\left( \frac{1}{c_{t_1}\left(\OO_{\PP(\underline{\EE})}(-\ee_1)\right)\cdots c_{t_m}\left(\OO_{\PP(\underline{\EE})}(-\ee_m)\right)} \,\smallfrown\, \left[\Bl_{Z_1,\ldots,Z_m}(X)\right]\right) 
$$
and 
$$
L_\ttt\left(\underline{W};Y\right) \;:=\; c_\ttt(\underline{\FF}) \,\smallfrown\, {q_2}_*\left( \frac{1}{c_{t_1}\left(\OO_{\PP(\underline{\FF})}(-\ee_1)\right)\cdots c_{t_m}\left(\OO_{\PP(\underline{\FF})}(-\ee_m)\right)} \,\smallfrown\, \left[\Bl_{W_1,\ldots,W_m}(Y)\right]\right).
$$
In \cite{aluffi2017segre}, when $m=1$, these classes are referred to as the shadow of the blow-up.
\begin{remark}
    \label{rem_formula_L}
    As a direct consequence of \autoref{thm_blow_up_form}, we obtain 
    $$    {j_{Z}}_*\left(s\left(Z_1,\ldots,Z_m;X\right)\right) \;=\; [X] - s_\ttt(\underline{\EE}) \smallfrown L_\ttt\left(\underline{Z};X\right).
    $$
    and 
    $$    {j_{W}}_*\left(s\left(W_1,\ldots,W_m;Y\right)\right) \;=\; [Y] - s_\ttt(\underline{\FF}) \smallfrown L_\ttt\left(\underline{W};Y\right).
    $$
\end{remark}

The next lemma yields convenient bounds for $L_\ttt\left(\underline{Z};X\right)$ and $L_\ttt\left(\underline{W};Y\right)$ (it justifies the apriori naive algebraic manipulation of \autoref{rem_formula_L}, where we multiply by a total Chern class and then by a total Segre class).

\begin{lemma}
\label{lem_bounds_L}
We have the following expressions bounded by the ranks $r_1,\ldots,r_m${\rm:}
\begin{enumerate}[\rm (i)]
\item There are classes $C_\alpha \in A_{N-\sum_{i=1}^mr_i+\sum_{i=1}^m\alpha_i}(X)$ with $\alpha = (\alpha_1,\ldots,\alpha_m) \in \NN^m$ such that 
$$
L_\ttt(\underline{Z}; X) \;=\; \sum_{0 \le \alpha_i\le r_i} \, C_\alpha \, t_1^{r_1-\alpha_1} \cdots t_m^{r_m-\alpha_m}.
$$
\item There are classes $D_\alpha \in A_{n-\sum_{i=1}^mr_i+\sum_{i=1}^m\alpha_i}(Y)$ with $\alpha = (\alpha_1,\ldots,\alpha_m) \in\NN^m$ such that 
$$
L_\ttt(\underline{W}; Y) \;=\; \sum_{0 \le \alpha_i\le r_i} \, D_\alpha \, t_1^{r_1-\alpha_1} \cdots t_m^{r_m-\alpha_m}.
$$
\end{enumerate}
\end{lemma}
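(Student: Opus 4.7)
The plan is to combine Fulton's projective bundle theorem (\cite[Theorem 3.3]{FULTON_INTERSECTION_THEORY}) with the Chern--Segre identity $c_{t_i}(\EE_i)\,s_{t_i}(\EE_i)=1$. Since parts (i) and (ii) are proved by literally the same calculation, I only indicate the argument for~(i). Write $\xi_i:=c_1\big(\OO_{\PP(\underline{\EE})}(\ee_i)\big)$. Iterated application of Fulton's projective bundle theorem to the factors $\PP(\EE_1),\ldots,\PP(\EE_m)$ of $\PP(\underline{\EE})$ shows that $A_*\big(\PP(\underline{\EE})\big)$ is a free $A_*(X)$-module with basis $\big\{\xi_1^{a_1}\cdots\xi_m^{a_m} : 0\leq a_i\leq r_i\big\}$. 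Hence $\big[\Bl_{Z_1,\ldots,Z_m}(X)\big]\in A_N\big(\PP(\underline{\EE})\big)$ has a unique expansion
$$
\big[\Bl_{Z_1,\ldots,Z_m}(X)\big] \;=\; \sum_{0\leq a_i\leq r_i} \xi_1^{a_1}\cdots\xi_m^{a_m} \,\smallfrown\, q_1^* C_a,
$$
and comparing dimensions forces $C_a\in A_{N-\sum_i r_i+\sum_i a_i}(X)$; these will be the classes appearing in the statement.

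Next I would expand $1/c_{t_i}\big(\OO_{\PP(\underline{\EE})}(-\ee_i)\big) = 1/(1-\xi_i t_i) = \sum_{k\geq 0}\xi_i^{k}t_i^{k}$ and apply the projection formula together with the iterated projective bundle pushforward
$$
q_{1*}\!\left(\xi_1^{b_1}\cdots\xi_m^{b_m} \,\smallfrown\, q_1^*\alpha\right) \;=\; s_{b_1-r_1}(\EE_1)\cdots s_{b_m-r_m}(\EE_m) \,\smallfrown\, \alpha \qquad (\alpha \in A_*(X)),
$$
which follows directly from Fulton's definition of the Segre class operators applied factor by factor, using the convention $s_j(\EE_i)=0$ for $j<0$. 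Substituting the expansion of $\big[\Bl_{Z_1,\ldots,Z_m}(X)\big]$ and re-indexing each geometric series by $j_i := k_i + a_i - r_i$, one obtains
$$
q_{1*}\!\left(\prod_{i=1}^m\tfrac{1}{1-\xi_i t_i}\,\smallfrown\, \big[\Bl_{Z_1,\ldots,Z_m}(X)\big]\right) \;=\; \sum_{0\leq a_i\leq r_i}\left(\prod_{i=1}^m t_i^{r_i-a_i}\,s_{t_i}(\EE_i)\right) \,\smallfrown\, C_a.
$$

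Finally, capping with $c_\ttt(\underline{\EE})=\prod_i c_{t_i}(\EE_i)$ and applying the Chern--Segre relation $c_{t_i}(\EE_i)\,s_{t_i}(\EE_i)=1$ to each factor collapses all Segre/Chern operators on the right, yielding
$$
L_\ttt(\underline{Z};X) \;=\; \sum_{0\leq \alpha_i\leq r_i} C_\alpha\, t_1^{r_1-\alpha_1}\cdots t_m^{r_m-\alpha_m},
$$
which is the required form for~(i). Part~(ii) follows verbatim after replacing $X,\EE_i,Z_i,N$ with $Y,\FF_i,W_i,n$ throughout. The proof is essentially bookkeeping; the only delicate point is verifying the iterated pushforward identity and the compatibility of the projection formula with the product of projective bundles, both of which are standard in Fulton's intersection theory.
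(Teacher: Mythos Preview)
Your proof is correct and follows essentially the same approach as the paper: both begin with the projective bundle decomposition of $[\Bl_{Z_1,\ldots,Z_m}(X)]$ via \cite[Theorem~3.3]{FULTON_INTERSECTION_THEORY}, then collapse the resulting expression using Chern--Segre relations. The only cosmetic difference is that the paper packages the cancellation by introducing the universal quotient bundles $\mathcal{Q}_i = q_1^*(\EE_i)/\OO_{\PP(\underline{\EE})}(-\ee_i)$ and invoking the Whitney sum formula together with \cite[Example~3.3.3]{FULTON_INTERSECTION_THEORY}, whereas you compute the Segre pushforward directly and then apply $c_{t_i}(\EE_i)\,s_{t_i}(\EE_i)=1$; these are equivalent bookkeeping devices for the same identity.
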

\begin{proof}
(i) 
From the formula for the Chow groups of projective bundles \cite[Theorem 3.3]{FULTON_INTERSECTION_THEORY}, we have a unique expression 
$$
\big[\Bl_{Z_1,\ldots,Z_m}\big] \;=\; \sum_{0 \le \alpha_i\le r_i} c_1\left(\OO_{\PP(\underline{\EE})}(\ee_1)\right)^{\alpha_1}\cdots c_1\left(\OO_{\PP(\underline{\EE})}(\ee_m)\right)^{\alpha_m} \,\smallfrown\, q_1^*\left(C_\alpha\right)
$$
where $C_\alpha \in A_{N-\sum_{i=1}^mr_i+\sum_{i=1}^m\alpha_i}(X)$.
Thus we get the formula
\begin{align*}
L_\ttt\left(\underline{Z};X\right) \;=\; c_\ttt(\underline{\EE}) \,\smallfrown\, \sum_{0 \le \alpha_i\le r_i} 
 {q_1}_*\left( \sum_{j_i \ge \alpha_i} 
 \left(
 \prod_{i=1}^mc_1\left(\OO_{\PP(\underline{\EE})}(\ee_i)\right)^{j_i} t_i^{j_i-\alpha_i}
 \right) \,\smallfrown\, q_1^*\left(C_\alpha\right) \right).
\end{align*}
Since ${q_1}_*\big(c_1\left(\OO_{\PP(\underline{\EE})}(\ee_1)\right)^{j_1}\cdots c_1\left(\OO_{\PP(\underline{\EE})}(\ee_m)\right)^{j_m} \,\smallfrown\, q_1^*\left(C_\alpha\right)\big) = 0$ when $j_i < r_i$ for any $1 \le i \le m$ (see \cite[Proposition 3.1(a)(i)]{FULTON_INTERSECTION_THEORY}), we can write
\begin{align*}
L_\ttt\left(\underline{Z};X\right) \;&=\; c_\ttt(\underline{\EE}) \,\smallfrown\, \sum_{0 \le \alpha_i\le r_i} \frac{1}{t_1^{\alpha_1}\cdots t_m^{\alpha_m}}\;
 {q_1}_*\left(\frac{1}{\prod_{i=1}^mc_{t_i}\left(\OO_{\PP(\underline{\EE})}(-\ee_i)\right)} \,\smallfrown\, q_1^*\left(C_\alpha\right)\right) \\
 \;&=\;  \sum_{0 \le \alpha_i\le r_i} \frac{1}{t_1^{\alpha_1}\cdots t_m^{\alpha_m}}\;
 {q_1}_*\left(\frac{c_\ttt(q_1^*(\underline{\EE}))}{\prod_{i=1}^mc_{t_i}\left(\OO_{\PP(\underline{\EE})}(-\ee_i)\right)} \,\smallfrown\, q_1^*\left(C_\alpha\right)\right),
\end{align*}
where $q_1^*(\underline{\EE}) = q_1^*(\EE_1),\ldots,q_1^*(\EE_m)$.
Consider the quotient bundle $\mathcal{Q}_i := q_1^*(\EE_i) / \OO_{\PP(\underline{\EE})}(-\ee_i)$ on $\PP(\underline{\EE})$ of rank $r_i$.
Finally, \cite[Example 3.3.3]{FULTON_INTERSECTION_THEORY} and the Whitney sum formula give 
\begin{align*}
L_\ttt\left(\underline{Z};X\right) \;&=\;   \sum_{0 \le \alpha_i\le r_i} \frac{1}{t_1^{\alpha_1}\cdots t_m^{\alpha_m}}\;
 {q_1}_*\big(c_{t_1}(\mathcal{Q}_1) \cdots c_{t_m}(\mathcal{Q}_m) \,\smallfrown\, q_1^*\left(C_\alpha\right)\big) \\
 \;&=\;  \sum_{0 \le \alpha_i\le r_i} \, C_\alpha \, t_1^{r_1-\alpha_1} \cdots t_m^{r_m-\alpha_m}.
\end{align*}

(ii) This part follows verbatim to part (i).
\end{proof}

The key result in this section is the following proposition. 

\begin{proposition}
    \label{prop_join_L}
    If $\dim\left(B_\phi\right) < N - \sum_{i=1}^m r_i$, then 
    $$
    L_\ttt(\underline{Z};X) \;=\; L_\ttt(\underline{W}; Y) \,\vee\, B_\phi.
    $$
\end{proposition}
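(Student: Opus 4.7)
The plan is to invoke the uniqueness clause of \autoref{rem_uniq_join} to reduce the identity to a verification on the open $U \subset X$, and then to push the computation through the fibre square \autoref{eq_fibre_sq_f_g} by exploiting the compatibility hypotheses built into \autoref{setup_functorial}. By \autoref{lem_bounds_L}(i), each coefficient of $L_\ttt(\underline{Z};X)$ is a class of dimension $N - \sum_{i=1}^m r_i + \sum_{i=1}^m \alpha_i$, which under the standing hypothesis is strictly greater than $\dim(B_\phi)$; by \autoref{lem_bounds_L}(ii), the corresponding coefficient of $L_\ttt(\underline{W};Y)$ lies in dimension $n - \sum r_i + \sum \alpha_i$, which matches the former after the join with $B_\phi$ shifts dimension by $N-n$. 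Applying \autoref{rem_uniq_join} to each graded piece therefore reduces the proposition to showing
$$
j_U^*\bigl(L_\ttt(\underline{Z};X)\bigr) \;=\; f^*\bigl(L_\ttt(\underline{W};Y)\bigr).
$$
Observe that $f$ is itself flat: since $\pi_1$ restricts to an isomorphism over $U$ and $\pi_2$ is flat by assumption, $f$ factors as $U \xrightarrow{\sim} \pi_1^{-1}(U) \hookrightarrow \Gamma_\phi \xrightarrow{\pi_2} Y$, so $f^*$ is ordinary flat pull-back.

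To verify the restricted identity, I would unwind $j_U^* L_\ttt(\underline{Z};X)$ through the fibre square \autoref{eq_fibre_sq_f_g}. Flat base change along the open immersion $j_U$ commutes $j_U^*$ past ${q_1}_*$ (since $q_1$ is proper on projective bundles), and the compatibility of flat pull-back with cap products and Chern classes allows rewriting the whole interior as the $g$-pull-back of corresponding data on $\PP(\underline{\FF})$. Concretely, one substitutes three identifications: (a) $c_\ttt\bigl({\underline{\EE}\mid}_U\bigr) = f^*c_\ttt(\underline{\FF})$, from the setup's assumption ${\EE_i\mid}_U \cong f^*\FF_i$; (b) the pull-back of each tautological bundle, giving $\OO_{\PP(\underline{\EE})}(-\ee_i)\mid_\UU \cong g^*\OO_{\PP(\underline{\FF})}(-\ee_i)$ and a corresponding identification of first Chern classes; and (c) \autoref{lem_join_blow_ups}, which supplies $j_\UU^*[\Bl_{Z_1,\ldots,Z_m}(X)] = g^*[\Bl_{W_1,\ldots,W_m}(Y)]$ and whose dimension hypothesis is precisely the one in force here. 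After these substitutions, $j_U^* L_\ttt(\underline{Z};X)$ takes the form $f^* c_\ttt(\underline{\FF}) \smallfrown {q_1}_*g^*(T')$, where $T'$ is exactly the interior of the defining expression for $L_\ttt(\underline{W};Y)$. A final application of flat base change to the fibre square \autoref{eq_fibre_sq_f_g} (valid since $q_2$ is proper and $f$ is flat) converts ${q_1}_* g^*$ into $f^* {q_2}_*$, and one reads off $f^*L_\ttt(\underline{W};Y)$.

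The principal difficulty is not any single deep step but the careful bookkeeping needed to align all of the compatibilities: one must verify that the flatness of $f$ (and hence of $g$) actually follows from the assumed flatness of $\pi_2$, that the various cap products and push-forwards interact correctly with the flat pull-backs $j_U^*$, $j_\UU^*$, $f^*$ and $g^*$, and that the single numerical hypothesis $\dim(B_\phi) < N - \sum_{i=1}^m r_i$ is simultaneously strong enough to drive both \autoref{rem_uniq_join} (uniqueness of the join) and \autoref{lem_join_blow_ups} (agreement of the joint blow-up classes after restriction to $\UU$). Once these alignments are secured, the argument is essentially a symbolic manipulation on the diagram \autoref{eq_fibre_sq_f_g}.
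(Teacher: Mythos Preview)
Your proposal is correct and follows essentially the same route as the paper: reduce via \autoref{lem_bounds_L} and \autoref{rem_uniq_join} to checking $j_U^*L_\ttt(\underline{Z};X)=f^*L_\ttt(\underline{W};Y)$, then verify this through the fibre square \autoref{eq_fibre_sq_f_g} using flat base change (\cite[Proposition~1.7]{FULTON_INTERSECTION_THEORY}), the compatibility of Chern classes, and \autoref{lem_join_blow_ups}. The paper computes $f^*L_\ttt(\underline{W};Y)$ and matches it to $j_U^*L_\ttt(\underline{Z};X)$, whereas you go in the opposite direction, but the substance is identical.
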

\begin{proof}
    Due to the assumption $\dim\left(B_\phi\right) < N - \sum_{i=1}^m r_i$, the bounds of \autoref{lem_bounds_L}, and \autoref{rem_uniq_join}, it suffices to show that 
    $$
    j_U^*\left(L_\ttt(\underline{Z};X)\right) \;=\; f^*\left(L_\ttt(\underline{W};Y)\right). 
    $$
    Let $\mathcal{B}=\Bl_{Z_1,\ldots,Z_m}(X)$ and $\widetilde{\mathcal{B}}=\Bl_{W_1,\ldots,W_m}(Y)$.
We utilize the fibre square \autoref{eq_fibre_sq_f_g} and \cite[Proposition 1.7]{FULTON_INTERSECTION_THEORY} to compute
\begin{align*}
f^*\left(L_\ttt\left(\underline{W};Y\right)\right) &\;=\; f^*\left(c_\ttt(\underline{\FF}) \,\smallfrown\, {q_2}_*\left( \frac{1}{\prod_{i=1}^mc_{t_i}\left(\OO_{\PP(\underline{\FF})}(-\ee_i)\right)} \,\smallfrown\, \left[\widetilde{\mathcal{B}}\right]\right)\right) \\
&\;=\; c_\ttt({\underline{\EE}|}_U) \,\smallfrown\, {q_1}_*\left(g^*\left( \frac{1}{\prod_{i=1}^mc_{t_i}\left(\OO_{\PP(\underline{\FF})}(-\ee_i)\right)} \,\smallfrown\, \left[\widetilde{\mathcal{B}}\right]\right)\right) \\
&\;=\; c_\ttt({\underline{\EE}|}_U) \,\smallfrown\, {q_1}_*\left(\frac{1}{\prod_{i=1}^mc_{t_i}\left(\OO_{\PP\left({\underline{\EE}|}_U\right)}(-\ee_i)\right)} \,\smallfrown\, j_\UU^*\left(\left[\mathcal{B}\right]\right)\right) \quad\; \text{(see \autoref{lem_join_blow_ups})}\\
&\;=\; j_U^*\left(L_\ttt(\underline{Z};X)\right).
\end{align*}
This completes the proof.
\end{proof}

The main result of this section is the following theorem.

\begin{theorem}
	\label{thm_pullback_Seg}
    Assume \autoref{setup_functorial} and the notations above.
    If $\dim\left(B_\phi\right) < N - m - \sum_{i=1}^m r_i$, then we have the equality
    $$
    {j_Z}_*\left(s(Z_1,\ldots,Z_m;X)\right) \;=\; s_\ttt\left(\underline{\EE}\right) \,\smallfrown\, \left( \Big( c_\ttt\left(\underline{\FF}\right) \,\smallfrown\, {j_W}_*\left(s(W_1,\ldots,W_m;Y)\right) \Big)  \,\vee\, B_\phi \right).
    $$
\end{theorem}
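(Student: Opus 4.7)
The plan is to start from the total‑Segre side of the target identity, unwind it via Remark~\ref{rem_formula_L} and the compatibility results for the join proved earlier in this section, and then use the same remark in the opposite direction on~$X$ to recognise the answer. Concretely, I would work in the group $A_*(X)[\ttt]$ throughout and treat every manipulation as linear (in particular, since the join is defined as ${\pi_1}_*\pi_2^*$, it is linear in the class on~$Y$ and commutes with capping by classes pulled back from~$X$).

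The first step is to rewrite the inner factor on the right‑hand side. By Remark~\ref{rem_formula_L} applied on $Y$,
\[
c_\ttt(\underline{\FF}) \,\smallfrown\, {j_W}_*\!\bigl(s(\underline{W};Y)\bigr)
\;=\; c_\ttt(\underline{\FF})\,\smallfrown\,[Y] \;-\; c_\ttt(\underline{\FF})\,\smallfrown\, s_\ttt(\underline{\FF}) \,\smallfrown\, L_\ttt(\underline{W};Y)
\;=\; c_\ttt(\underline{\FF})\,\smallfrown\,[Y] \;-\; L_\ttt(\underline{W};Y),
\]
where the last equality uses the Whitney identity $c_\ttt(\underline{\FF}) s_\ttt(\underline{\FF}) = 1$. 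Then I take the join with $B_\phi$ and use its linearity, reducing everything to computing $\bigl(c_\ttt(\underline{\FF})\smallfrown[Y]\bigr)\vee B_\phi$ and $L_\ttt(\underline{W};Y)\vee B_\phi$ separately.

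The second step is to identify these two joins with their $X$‑counterparts. For the $L$‑term, Proposition~\ref{prop_join_L} gives directly
$L_\ttt(\underline{W};Y)\vee B_\phi = L_\ttt(\underline{Z};X)$, since the assumption $\dim(B_\phi)<N-m-\sum r_i$ is stronger than the one needed there. For the Chern‑class term, I would argue as in Remark~\ref{rem_uniq_join}: the compatibility ${\EE_i\mid}_U \cong f^*\FF_i$ from \autoref{setup_functorial} implies
\[
j_U^*\bigl(c_\ttt(\underline{\EE})\,\smallfrown\,[X]\bigr)
\;=\; c_\ttt\bigl({\underline{\EE}\mid}_U\bigr)\,\smallfrown\,[U]
\;=\; f^*\bigl(c_\ttt(\underline{\FF})\,\smallfrown\,[Y]\bigr).
\]
The piece of $c_\ttt(\underline{\FF})\smallfrown[Y]$ of lowest dimension sits in $A_{n-m-\sum r_i}(Y)$, because each $\FF_i$ has rank $r_i+1$. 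Hence the corresponding piece of its join lies in $A_{N-m-\sum r_i}(X)$, and the dimension hypothesis $\dim(B_\phi)<N-m-\sum r_i$ is exactly what is needed to invoke the uniqueness statement of Remark~\ref{rem_uniq_join} for every component. This gives
\[
\bigl(c_\ttt(\underline{\FF})\,\smallfrown\,[Y]\bigr)\vee B_\phi \;=\; c_\ttt(\underline{\EE})\,\smallfrown\,[X].
\]

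The third step is to assemble everything on $X$. Combining the above,
\[
\Bigl(c_\ttt(\underline{\FF})\,\smallfrown\,{j_W}_*\!\bigl(s(\underline{W};Y)\bigr)\Bigr)\vee B_\phi
\;=\; c_\ttt(\underline{\EE})\,\smallfrown\,[X] \;-\; L_\ttt(\underline{Z};X),
\]
and capping with $s_\ttt(\underline{\EE})$ then using Whitney on $X$ yields $[X] - s_\ttt(\underline{\EE})\smallfrown L_\ttt(\underline{Z};X)$, which is precisely ${j_Z}_*\bigl(s(\underline{Z};X)\bigr)$ by Remark~\ref{rem_formula_L} again.

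The main obstacle is the Chern‑class identity $\bigl(c_\ttt(\underline{\FF})\smallfrown[Y]\bigr)\vee B_\phi = c_\ttt(\underline{\EE})\smallfrown[X]$: this is where the slightly sharper dimension bound $\dim(B_\phi)<N-m-\sum r_i$ (rather than $<N-\sum r_i$) is forced, because one must control the lowest‑dimensional piece $c_{r_1+1}(\FF_1)\cdots c_{r_m+1}(\FF_m)\smallfrown[Y]$. All the other manipulations (linearity of the join, Whitney, and applying Remark~\ref{rem_formula_L}) are formal.
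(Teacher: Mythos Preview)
Your proposal is correct and follows essentially the same approach as the paper's proof: both arguments hinge on \autoref{rem_formula_L}, \autoref{prop_join_L}, and the identity $(c_\ttt(\underline{\FF})\smallfrown[Y])\vee B_\phi = c_\ttt(\underline{\EE})\smallfrown[X]$ obtained from \autoref{rem_uniq_join} under the sharper bound $\dim(B_\phi)<N-m-\sum r_i$. The only difference is cosmetic---you start from the right-hand side and work toward ${j_Z}_*\bigl(s(\underline{Z};X)\bigr)$, while the paper starts from the left-hand side; the ingredients and their roles are identical.
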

\begin{proof}
    Let $\underline{Z} = Z_1,\ldots,Z_m$ and $\underline{W} = W_1,\ldots,W_m$.
    From \autoref{rem_formula_L} and \autoref{prop_join_L}, we obtain
    \begin{align*}
        {j_Z}_*\left(s(\underline{Z};X)\right) &\;=\; [X] - s_\ttt(\underline{\EE}) \smallfrown L_\ttt\left(\underline{Z};X\right)\\
        &\;=\; [X] - s_\ttt(\underline{\EE}) \smallfrown \left(L_\ttt(\underline{W};Y) \,\vee\; B_\phi\right)\\
        &\;=\; [X] - s_\ttt(\underline{\EE}) \smallfrown \left(\Big( c_\ttt(\underline{\FF}) \smallfrown [Y] - c_\ttt(\underline{\FF}) \smallfrown {j_W}_*\left(s(\underline{W};Y)\right) \Big) \,\vee\; B_\phi\right).
    \end{align*}
    Therefore, to conclude the proof it suffices to show that 
    $$
    c_\ttt(\underline{\EE}) \smallfrown [X] \;=\; \big(c_\ttt(\underline{\FF}) \smallfrown [Y]\big) \,\vee\, B_\phi;
    $$
    however, this equality already follows from the upper bound $\dim\left(B_\phi\right) < N  - \sum_{i=1}^m (r_i+1) = N - \sum_{i=1}^{m}\rank(\EE_i)$, our compatibility assumptions (see \autoref{setup_functorial}) and \autoref{rem_uniq_join}.
\end{proof}

\section{Existence and rationality of mixed Segre zeta functions}	
\label{sect_mixed_Seg_zeta}

We are now ready to prove the existence and rationality of mixed Segre zeta functions. 
Throughout this section,   we utilize the developments of the previous \autoref{sect_funct_Segre} in the following setting. 

\begin{setup}
	\label{setup_mixed_Segre_zeta}
	Let $R = \kk[x_0,\ldots,x_n]$ be a polynomial ring over a field $\kk$.
	Let $I_1,\ldots,I_m \subset R$ be homogeneous ideals in $R$.
	For each $1 \le i \le m$, let $f_{i, 0}, f_{i, 1}, \dotsc, f_{i, r_i} \in R$ be homogeneous generators of $I_i$, with the degree of $f_{i, j}$ equal to $d_{i,j} = \deg\left(f_{i,j}\right)$.
	Let $Z_i$ be the closed subscheme defined by $I_i \subset R$ in $\PP_\kk^n = \Proj(R)$. 
	Denote by $\iota : Z \hookrightarrow \PP_\kk^n$ the closed subscheme defined by the product ideal $I_1\cdots I_m \subset R$.
	For each integer $N \ge n$, we consider the following objects:
	\begin{itemize}[--]
		\item $R^N = \kk[x_0,\ldots,x_n, \ldots, x_N]$ a polynomial ring containing $R$ and $\PP_\kk^N = \Proj(R^N)$.
		\item $Z_i^N \subset \PP_\kk^N$ the closed subscheme defined by the extension of $I_i$ to $R^N$.
		\item $\iota_N : Z^N \hookrightarrow \PP_\kk^N$ the closed subscheme defined by the extension of $I_1\cdots I_m$ to $R^N$.
	\end{itemize}
\end{setup}

By \autoref{lem_existence} below, there is a formal power series with integer coefficients 
$$
\zeta_{I_1, \ldots, I_m}(t_1,\ldots,t_m) \;=\; \sum_{i_1,\ldots,i_m \in \NN} \, a_{i_1,\ldots,i_m} \, t_1^{i_1}\cdots t_m^{i_m} \;\in\; \ZZ[\![t_1,\ldots,t_m]\!]
$$
such that 
$$
{\iota_N}_*\left(s^{i_1,\ldots,i_m}\left(Z_1^N,\ldots,Z_m^N;\, \PP_\kk^N\right)\right) \;=\; a_{i_1,\ldots,i_m} \, H^{i_1+\cdots+i_m} \,\smallfrown\, \left[\PP_\kk^N\right]
$$
for all $N \ge n$ and $i_1 + \cdots + i_m \le N$.

\begin{definition}
	We call $\zeta_{I_1, \ldots, I_m}(t_1,\ldots,t_m)$ the \emph{mixed Segre zeta function} of the ideals $I_1,\ldots,I_m$.	
\end{definition}

The main result of this section is the following theorem.

\begin{theorem}[Rationality of mixed Segre zeta functions]
	\label{thm_rationality}
	Assume \autoref{setup_mixed_Segre_zeta}.
	Then we can write 
		$$
		\zeta_{I_1, \ldots, I_m}(t_1,\ldots,t_m) \;\;=\;\; \frac{P(t_1,\ldots,t_m)}{\;\prod_{1 \le i \le m} \left(1+d_{i,0}t_i\right)\left(1+d_{i,1}t_i\right)\cdots\left(1+d_{i,r_i}t_i\right)\;}
		$$
		where $P(t_1,\ldots,t_m) \in \NN[t_1,\ldots,t_m]$ is a polynomial with nonnegative integer coefficients.
\end{theorem}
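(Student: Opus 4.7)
My plan is to adapt Aluffi's approach for single Segre zeta functions \cite{aluffi2017segre}, lifting it to the mixed setting via the pull-back formula of \autoref{thm_pullback_Seg}. The strategy has two central ideas: (i) realize each $Z_i^N$ as the zero scheme of a section of an explicit vector bundle $\EE_i$ whose total Segre class already carries the desired denominator, and (ii) use the pull-back formula to reduce the computation to intersection data on a simpler ``model'' target where the rational form is manifest.

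First, I would invoke the stated existence result to fix $N \ge m + \sum_{i=1}^m r_i$ so that the coefficient $a_{i_1,\ldots,i_m}$ is computed by the pushforward of $s^{i_1,\ldots,i_m}\bigl(Z_1^N,\ldots,Z_m^N;\PP_\kk^N\bigr)$. Set $X := \PP_\kk^N$ and let $\EE_i := \bigoplus_{j=0}^{r_i}\OO_X(d_{i,j})$ of rank $r_i+1$ with section $s_{\EE_i} := (f_{i,0},\ldots,f_{i,r_i})$, so that $Z(s_{\EE_i}) = Z_i^N$. The total Segre class $s_{t_i}(\EE_i) = \prod_{j=0}^{r_i}(1+d_{i,j}Ht_i)^{-1}$, with $H$ the hyperplane class, encodes the claimed denominator from the outset. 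Next I would construct a dominant rational projection $\phi : X \dashrightarrow Y$ onto a simple target $Y$ carrying compatible bundles $\FF_i$ of rank $r_i+1$, sections $s_{\FF_i}$ with zero schemes $W_i$, so that the compatibility axioms of \autoref{setup_functorial} hold and $\dim(B_\phi) < N - m - \sum r_i$. A natural candidate is for $Y$ to be a product of projective spaces parameterizing the coefficients of forms of degrees $d_{i,j}$, with $\phi$ a linear projection dual to a coordinate embedding; this matches the philosophy of \autoref{setup_mixed_Segre_zeta}, where ideals are extended along inclusions of polynomial rings.

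Applying \autoref{thm_pullback_Seg} would then yield
\[
{\iota_N}_*\bigl(s(Z_1^N,\ldots,Z_m^N;X)\bigr) \;=\; s_\ttt(\underline{\EE}) \,\smallfrown\, \Bigl(\bigl(c_\ttt(\underline{\FF}) \smallfrown {j_W}_*(s(\underline{W};Y))\bigr) \vee B_\phi\Bigr).
\]
Reading off the $H^{i_1+\cdots+i_m}$-coefficients on both sides, and using $\deg H^N = 1$ in $A_*(\PP_\kk^N)$, the factor $s_\ttt(\underline{\EE})$ contributes the asserted denominator $\prod_{i,j}(1+d_{i,j}t_i)$, while the bracketed expression is a polynomial in $t_1,\ldots,t_m$ (with $t_i$-degree bounded by $r_i+1$) that produces an integer-coefficient numerator $P(t_1,\ldots,t_m)$. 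For the nonnegativity $P \in \NN[t_1,\ldots,t_m]$, I would identify each coefficient of $P$ with the degree of an effective intersection class of the form $H^a K_1^{b_1}\cdots K_m^{b_m} \cdot [\mathcal{B}]$ on the joint blow-up $\mathcal{B}=\Bl_{Z_1^N,\ldots,Z_m^N}(\PP_\kk^N)$ embedded into the appropriate product of projective bundles. This is the unequal-degree analogue of the explicit formula in \autoref{cor_computing}, and such intersection numbers are nonnegative because $[\mathcal{B}]$ and the relevant hyperplane classes are effective.

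The main obstacle is twofold. First, unlike the equal-degree case of \autoref{setup_computing}, where the generators of each $I_i$ define a morphism to $\PP_\kk^{r_i}$, no such natural map exists when the $d_{i,j}$ vary with $j$; the target $Y$ and projection $\phi$ in the second step must be crafted carefully, likely by working with projective bundles of the shape $\PP(\bigoplus_j\OO(d_{i,j})^\vee)$ or by passing to a Veronese-type enlargement, to secure all the compatibility axioms of \autoref{setup_functorial}. Second and more delicate, the \emph{nonnegativity} of $P$: rationality with the correct pole structure follows fairly directly from \autoref{thm_pullback_Seg} once the setup is in place, but establishing $P \in \NN[t_1,\ldots,t_m]$ demands an effective, cycle-theoretic interpretation of every coefficient, generalizing the projective-degree formula in \autoref{cor_computing} to bundles with summands of mixed degrees. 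Making both steps cohere is where the bulk of the work lies.
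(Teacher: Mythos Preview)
Your framework is correct---rationality does come from \autoref{thm_pullback_Seg}---but you are missing the key specialization that makes it work, and what you call the ``main obstacle'' is in fact no obstacle at all. The target $Y$ is simply $\PP_\kk^n$, the projective space in which the ideals $I_1,\ldots,I_m$ already live. The rational map is the linear projection $\phi_N : \PP_\kk^N \dashrightarrow \PP_\kk^n$, $(x_0:\cdots:x_N)\mapsto(x_0:\cdots:x_n)$, with base locus $B_{\phi_N}=V(x_0,\ldots,x_n)\cong\PP_\kk^{N-n-1}$. On the target one takes the \emph{same} bundles $\FF_i=\bigoplus_j\OO_{\PP_\kk^n}(d_{i,j})$ and the \emph{same} sections $s_{\FF_i}=(f_{i,0},\ldots,f_{i,r_i})$; the compatibility axioms of \autoref{setup_functorial} are then automatic because ${\OO_{\PP_\kk^N}(d)\mid}_{U_N}\cong f_N^*\OO_{\PP_\kk^n}(d)$ and the $f_{i,j}$ lie in $R\subset R^N$. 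The dimension hypothesis $\dim(B_{\phi_N})=N-n-1<N-m-\sum_i r_i$ is exactly the condition $n\ge m+\sum_i r_i$, which one arranges via \autoref{lem_existence}. With this choice the join $\vee\,B_{\phi_N}$ acts by replacing $[\PP_\kk^n]$ with $[\PP_\kk^N]$ while keeping the same polynomial in $H$, so \autoref{thm_pullback_Seg} immediately yields the truncation identity of \autoref{lem_truncation}, and $P(\ttt)={\rm T}_n\bigl(\prod_{i,j}(1+d_{i,j}t_i)\,\zeta_{I_1,\ldots,I_m}(\ttt)\bigr)$ is a fixed polynomial independent of $N$. There is no need for products of projective spaces, parameter spaces of forms, or Veronese constructions; the varying degrees $d_{i,j}$ cause no difficulty because you never need a morphism to $\PP_\kk^{r_i}$---the bundles $\EE_i^N,\FF_i$ absorb that data.

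For nonnegativity, your proposed route through a mixed-degree analogue of \autoref{cor_computing} is harder than necessary and you leave it unresolved. The paper instead rewrites $c_\ttt(\underline{\EE})\smallfrown\iota_*\bigl(s(\underline{Z};\PP_\kk^n)\bigr)$, using \autoref{rem_seg_fulton} and the Whitney formula, as a sum of pushforwards of Chern classes of the quotient bundles $\mathcal{Q}_i=q^*(\EE_i)/\OO_{\PP(\underline{\EE})}(-\ee_i)$ capped with the effective classes $[E_k]$. Each $\mathcal{Q}_i$ is a quotient of the globally generated bundle $q^*(\EE_i)$, hence itself globally generated, and \cite[Example 12.1.7(a)]{FULTON_INTERSECTION_THEORY} then gives the nonnegativity directly; this is \autoref{lem_nonnegative}.
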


We first establish the notation required for the proof of the above theorem.
Let $N \ge n$.
Consider the vector bundles $\underline{\mathcal{E}}^N = \mathcal{E}_1^N,\ldots,\mathcal{E}_m^N$ on $\PP_\kk^N$ with 
$$
\mathcal{E}_i^N \;:=\; \OO_{\PP_\kk^N}\left(d_{i,0}\right) \,\oplus\, \OO_{\PP_\kk^N}\left(d_{i,1}\right) \,\oplus\, \cdots \,\oplus\, \OO_{\PP_\kk^N}\left(d_{i,r_i}\right).
$$
Since $d_{i,j} > 0$, each vector bundle $\EE_i^N$ is generated by global sections. 
We have that $s_i^N = (f_{i,0},\ldots,f_{i,r_i}) \in \HH^0(\PP_\kk^N, \mathcal{E}_i^N)$ is a section of $\mathcal{E}_i^N$.
The joint blow-up 
$$
b^N \;:\; \mathcal{B}^N = \Bl_{Z_1^N,\ldots,Z_m^N}(\PP_\kk^N) \;\rightarrow\; \PP_\kk^N
$$ 
can be seen as (the closure) of the image of the induced rational map 
$$
s^N = (s_{1}^N, \ldots,s_{m}^N) \;:\; \PP_\kk^N  \;\dashrightarrow\; \PP\big(\underline{\EE}^N\big) \;=\; \PP(\EE_1^N) \times_{\PP_\kk^N} \cdots \times_{\PP_\kk^N} \PP(\EE_m^N).
$$
Let $q_N : \PP(\underline{\mathcal{E}}^N) \rightarrow \PP_\kk^N$ be the natural projection.
When $N = n$, we simply write $\mathcal{E}_i := \mathcal{E}_i^n$, $\underline{\mathcal{E}} := \underline{\mathcal{E}}^n$, $q := q^n$, $s_i := s_i^n$, $s:=s^n$, $b := b^n$, and $\mathcal{B}:=\mathcal{B}^n$.
Let $E_i$ be the pull-back to $\mathcal{B} = \Bl_{Z_1,\ldots,Z_m}(\PP_\kk^n)$ of the exceptional divisor of the blow-up $\Bl_{Z_i}(\PP_\kk^n)$.

The rational map 
$$
\phi_N : \PP_\kk^N \dashrightarrow \PP_\kk^n, \qquad \left(x_0:\cdots:x_n:x_{n+1}:\cdots:x_N\right) \mapsto \left(x_0:\cdots:x_n\right)
$$ 
is a projection in the sense of \autoref{setup_functorial}.
The base locus of $\phi_N$ is equal to $B_{\phi_N} = V(x_0,\ldots,x_n)\cong \PP_\kk^{N-n-1} \subset \PP_\kk^N$.
Let $U_N := \PP_\kk^N \setminus B_{\phi_N}$ and $f_N : U_N \rightarrow \PP_\kk^n$ be the associated morphism. 
Then we can check that the projection $\phi_N : \PP_\kk^N \dashrightarrow \PP_\kk^n$ satisfies all the compatibility assumptions of \autoref{setup_functorial}.
For instance, we have the compatibility isomorphism
$$
{\mathcal{E}_i^{N}\mid}_{U_N} \;\cong\; f_N^*\left(\EE_i\right).
$$
In terms of $\phi_N : \PP_\kk^N \dashrightarrow \PP_\kk^n$ and the base locus $B_{\phi_N}$, the join operation introduced in \autoref{def_join} acts in the simplest possible way.
More precisely, for any formal polynomial  $$
C(\ttt) \;=\; \left(\sum_{\substack{\alpha \in \NN^m\\0\le  k  \le n}} c_{\alpha,k}\, t_1^{\alpha_1}\cdots t_m^{\alpha_m} H^k\right) \,\smallfrown\, \left[\PP_\kk^n\right] \;\;\in\;\; A^*(\PP_\kk^n)\left[\ttt\right] \;\cong\; \frac{\ZZ\left[H, \ttt\right]}{\left(H^{n+1}\right)}
$$ the join with respect to $B_{\phi_N}$ is equal to
$$
C(\ttt) \,\vee\; B_{\phi_N} \;=\; \left(\sum_{\substack{\alpha \in \NN^m\\0\le  k  \le n}} c_{\alpha,k}\, t_1^{\alpha_1}\cdots t_m^{\alpha_m} H^k\right) \,\smallfrown\, \left[\PP_\kk^N\right] \;\;\in\;\; A^*(\PP_\kk^N)\left[\ttt\right] \;\cong\; \frac{\ZZ\left[H, \ttt\right]}{\left(H^{N+1}\right)}
$$ 
where $H$ denotes the class of a hyperplane on both $\PP_\kk^n$ and $\PP_\kk^N$.

The existence of $\zeta_{I_1, \ldots, I_m}(t_1,\ldots,t_m)$ follows from the following lemma.

\begin{lemma}[Existence of mixed Segre zeta functions]
	\label{lem_existence}
	For any $M > N \ge n$ and $i_1+\cdots+i_m \le N$, if we write 
	$$
	{\iota_N}_*\left(s^{i_1,\ldots,i_m}\left(Z_1^N,\ldots,Z_m^N;\, \PP_\kk^N\right)\right) \;=\; a_{i_1,\ldots,i_m} \, H^{i_1+\cdots+i_m} \,\smallfrown\, \left[\PP_\kk^N\right]
	$$
	and 
	$$
	{\iota_{M}}_*\left(s^{i_1,\ldots,i_m}\left(Z_1^{M},\ldots,Z_m^{M};\, \PP_\kk^{M}\right)\right) \;=\; b_{i_1,\ldots,i_m} \, H^{i_1+\cdots+i_m} \,\smallfrown\, \left[\PP_\kk^{M}\right], 
	$$
	then $a_{i_1,\ldots,i_m} = b_{i_1,\ldots,i_m}$.
\end{lemma}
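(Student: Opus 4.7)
My plan is to apply the flat pull-back formula of Theorem 3.8(ii) to the natural flat projection
\[
f \colon U \;\longrightarrow\; \PP_\kk^N,
\]
arising from the rational projection $\phi_{M,N} \colon \PP_\kk^M \dashrightarrow \PP_\kk^N$ that drops the last $M-N$ homogeneous coordinates. Here $U := \PP_\kk^M \setminus V(x_0,\ldots,x_N)$ is the open domain of $\phi_{M,N}$, and $f$ is flat of relative dimension $M-N$; let $j \colon U \hookrightarrow \PP_\kk^M$ denote the open immersion. Note that $f^{-1}(Z_i^N) = Z_i^M \cap U$, and analogously for $j$.

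The first step is to combine Theorem 3.8(ii), applied separately to $f$ and to the open immersion $j$, with standard flat base change along the two Cartesian squares whose bottom arrows are $f$ and $j$ respectively, and whose top arrows are the induced morphisms $g \colon Z^M \cap U \to Z^N$ and $Z^M \cap U \hookrightarrow Z^M$. This yields the identity
\[
f^*\!\bigl({\iota_N}_*(s(Z_1^N,\ldots,Z_m^N;\PP_\kk^N))\bigr) \;=\; j^*\!\bigl({\iota_M}_*(s(Z_1^M,\ldots,Z_m^M;\PP_\kk^M))\bigr) \qquad \text{in } A_*(U),
\]
since both sides are equal to ${\iota_U}_*\bigl(s(Z_1^M \cap U, \ldots, Z_m^M \cap U;\, U)\bigr)$.

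The second step is to unwind this identity in each multi-degree. Since $f^*(H) = j^*(H)$ as the hyperplane class of $U$, and $f^*[\PP_\kk^N] = [U] = j^*[\PP_\kk^M]$, the identity specializes to
\[
\bigl(a_{i_1,\ldots,i_m} - b_{i_1,\ldots,i_m}\bigr)\, H^{i_1+\cdots+i_m} \smallfrown [U] \;=\; 0 \;\in\; A_{M - (i_1+\cdots+i_m)}(U).
\]
The excision exact sequence
\[
A_{M-(i_1+\cdots+i_m)}\bigl(V(x_0,\ldots,x_N)\bigr) \;\to\; A_{M-(i_1+\cdots+i_m)}(\PP_\kk^M) \;\xrightarrow{\;j^*\;}\; A_{M-(i_1+\cdots+i_m)}(U) \;\to\; 0,
\]
combined with the hypothesis $i_1+\cdots+i_m \leq N$, gives $M - (i_1+\cdots+i_m) \geq M - N > \dim V(x_0,\ldots,x_N) = M - N - 1$, so the leftmost term vanishes. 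Thus $j^*$ is an isomorphism, the class $H^{i_1+\cdots+i_m} \smallfrown [U]$ generates a free $\ZZ$-summand, and the displayed equation forces $a_{i_1,\ldots,i_m} = b_{i_1,\ldots,i_m}$.

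The main technical point will be establishing the flat base-change identity of the first step cleanly---that is, verifying that push-forward along each of $\iota_M, \iota_N$ commutes with flat pull-back along $j, f$ respectively---but this follows from standard flat base change in Chow theory (e.g.~\cite[Proposition 1.7]{FULTON_INTERSECTION_THEORY}) together with Theorem 3.8(ii). The dimension-counting via excision in the final step is what ultimately pins down the coefficient equality, and it crucially uses that the hypothesis $i_1+\cdots+i_m \leq N$ leaves enough room above the base locus $V(x_0,\ldots,x_N)$.
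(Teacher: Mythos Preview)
Your argument is correct, but it takes a different route from the paper's own proof. A small bookkeeping note: the result you invoke as ``Theorem~3.8(ii)'' is the flat pull-back statement for mixed Segre classes, which in the paper is \autoref{thm_funct_mixed_Segre}(ii), not \autoref{thm_pullback_Seg}. Also bear in mind that \autoref{thm_funct_mixed_Segre} requires each $Z_i$ to be nowhere dense; this is satisfied under \autoref{setup_mixed_Segre_zeta} since each $I_i$ is generated in positive degree, but it is worth stating.

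The paper instead argues by \emph{specialization to a hyperplane}: reducing to $M=N+1$, it views $\PP_\kk^N \hookrightarrow \PP_\kk^{N+1}$ as the divisor $\{x_{N+1}=0\}$ and observes that, because none of the $I_i$ involve $x_{N+1}$, one has $q_{N+1}^*H \cdot [\mathcal{B}^{N+1}] = [\mathcal{B}^N]$ as cycles. The blow-up formula \autoref{thm_blow_up_form}(i) together with the projection formula then gives
\[
H \cdot {\iota_{N+1}}_*\bigl(s(\underline{Z}^{N+1};\PP_\kk^{N+1})\bigr) \;=\; {\iota_N}_*\bigl(s(\underline{Z}^N;\PP_\kk^N)\bigr),
\]
from which the coefficient equality is immediate. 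So the paper goes \emph{down} by intersecting with a hyperplane, while you go \emph{up} via the flat affine-bundle projection $U \to \PP_\kk^N$ and then use excision to recover information on $\PP_\kk^M$. Your approach is more functorial and makes transparent exactly where the bound $i_1+\cdots+i_m \le N$ enters (to kill $A_*$ of the base locus); the paper's approach is shorter, avoids any open-restriction step, and relies only on \autoref{thm_blow_up_form} rather than the full functoriality package. Both are perfectly valid.
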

\begin{proof}
	We may assume that $M=N+1$. 
	We see $\PP_\kk^N$ as the divisor on $\PP_\kk^{N+1}$ determined by the variable $x_{N+1}$ in $R^{N+1} = \kk[x_0,\ldots,x_{N+1}]$.
	Since each ideal $I_i$ does not involve the variable $x_{N+1}$, we obtain the following specialization of joint blow-ups
	$$
	q_{N+1}^*H \,\cdot\, \left[\mathcal{B}^{N+1}\right] \;=\; \left[\mathcal{B}^N\right].
	$$
	By utilizing \autoref{thm_blow_up_form} and the projection formula, we get 
	$$
	H \,\cdot\, {\iota_{N+1}}_*\left(s\left(Z_1^{N+1},\ldots,Z_m^{N+1}; \,\PP_\kk^{N+1}\right)\right) \;=\; {\iota_{N}}_*\left(s\left(Z_1^{N},\ldots,Z_m^{N}; \,\PP_\kk^{N}\right)\right).
	$$
	This implies the result of the lemma.
\end{proof}

Given a formal power series $F(\ttt) = F(t_1,\ldots,t_m) = \sum_{\alpha \in \NN^m} c_\alpha t_1^{\alpha_1}\cdots t_m^{\alpha_m} \in \ZZ[\![t_1,\ldots,t_m]\!]$ and a nonnegative integer $k \ge 0$, we define the \emph{$k$-truncation} as the sum 
$$
{\rm T}_k\left(F(\ttt)\right) \;:=\; \sum_{|\alpha| \le k} c_\alpha t_1^{\alpha_1}\cdots t_m^{\alpha_m} \;\in\; \ZZ[t_1,\ldots,t_m]
$$
of terms with total degree $\le k$.
Our main application of \autoref{thm_pullback_Seg} is given in the following lemma.

\begin{lemma}
	\label{lem_truncation}
	If $n \ge m+\sum_{i=1}^{m}r_i$, then the following statements hold for all $N \ge n$:
	\begin{enumerate}[\rm(i)]
		\item In $A^*(\PP_\kk^N)[t_1,\ldots,t_m]$, we have the equality
		$$
		{\iota_N}_*\left(s\left(Z_1^N,\ldots,Z_m^N;\, \PP_\kk^N\right)\right) \;=\; \frac{
			\Big(
			\prod_{i,j}\left(1+d_{i,j}\,H\,t_i\right) \cdot {\iota}_*\left(s\left(Z_1,\ldots,Z_m;\, \PP_\kk^n\right)\right)
			\Big) \;\vee\; B_{\phi_N}
		}{\;\prod_{i,j} \left(1+d_{i,j}\,H\,t_i\right)\;}.
		$$
		\item In $\ZZ[t_1,\ldots,t_m]$, we have the equality
		$$
		{\rm T}_N\left(\zeta_{I_1, \ldots, I_m}(\ttt)\right) \;=\; {\rm T}_N\left(
		\frac{
			{\rm T}_n\Big(
			\prod_{i,j} \left(1+d_{i,j}\,t_i\right)	\zeta_{I_1, \ldots, I_m}(\ttt)
			\Big)
		}{\prod_{i,j} \left(1+d_{i,j}\,t_i\right)}
		\right).	
		$$
	\end{enumerate}	
\end{lemma}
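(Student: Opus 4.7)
The plan is to deduce (i) directly from \autoref{thm_pullback_Seg} applied to the projection $\phi_N$, and then to derive (ii) from (i) by exploiting a ``diagonal'' structure linking $H$-degree with $\ttt$-total-degree.

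For (i), I would invoke \autoref{thm_pullback_Seg} with $X=\PP_\kk^N$, $Y=\PP_\kk^n$, $\phi=\phi_N$, the vector bundles $\underline{\EE}^N$ on $\PP_\kk^N$ and $\underline{\EE}$ on $\PP_\kk^n$, and the compatible sections $s_i^N,s_i$ described in the paragraphs preceding the lemma. Since $\EE_i^N=\bigoplus_{j=0}^{r_i}\OO_{\PP_\kk^N}(d_{i,j})$ splits as a direct sum of line bundles, the Whitney formula gives
$$
c_\ttt(\underline{\EE}^N)\;=\;\prod_{i,j}\bigl(1+d_{i,j}Ht_i\bigr)\qquad\text{and}\qquad s_\ttt(\underline{\EE}^N)\;=\;\prod_{i,j}\bigl(1+d_{i,j}Ht_i\bigr)^{-1},
$$
and analogously for $\underline{\EE}$. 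The standing hypothesis $n\ge m+\sum_i r_i$ is precisely the dimension bound $\dim(B_{\phi_N})=N-n-1<N-m-\sum_i r_i$ required by \autoref{thm_pullback_Seg}, so plugging these Chern and Segre classes into its conclusion produces (i).

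For (ii), I would set $\widetilde{f}(H,\ttt):=f(Ht_1,\ldots,Ht_m)$ for any $f\in\ZZ[\![\ttt]\!]$, and introduce the map $\Phi_M(f):=\widetilde{f}\,\smallfrown\,[\PP_\kk^M]$. By the defining property of $\zeta_{I_1,\ldots,I_m}$, one has $\iota_{M*}\bigl(s(Z_1^M,\ldots,Z_m^M;\PP_\kk^M)\bigr)=\Phi_M(\zeta_{I_1,\ldots,I_m})$ for every $M\ge n$. The crucial observation is that the ``diagonal'' subring $\{\widetilde{f}:f\in\ZZ[\![\ttt]\!]\}\subset\ZZ[H][\![\ttt]\!]$ is closed under products and under formal inverses of units, and that both $\prod_{i,j}(1+d_{i,j}Ht_i)=\widetilde{\prod_{i,j}(1+d_{i,j}t_i)}$ and its reciprocal belong to it. Since on diagonal series the $\smallfrown[\PP_\kk^n]$-cap amounts to the $\ttt$-truncation ${\rm T}_n$, and the $\vee B_{\phi_N}$-operation merely substitutes $[\PP_\kk^n]$ by $[\PP_\kk^N]$ on a diagonal representative, the RHS of (i) rewrites as $\Phi_N(V)$ for
$$
V(\ttt)\;:=\;\frac{{\rm T}_n\bigl(\prod_{i,j}(1+d_{i,j}t_i)\,\zeta_{I_1,\ldots,I_m}(\ttt)\bigr)}{\prod_{i,j}(1+d_{i,j}t_i)}.
$$
Combining this with the identification LHS$=\Phi_N(\zeta_{I_1,\ldots,I_m})$ and the fact that $\Phi_N(f)=\Phi_N(g)$ if and only if ${\rm T}_N(f)={\rm T}_N(g)$, one reads off (ii).

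The main obstacle is the careful bookkeeping of the two nested truncations: the inner $[\PP_\kk^n]$-cap (which kills $\ttt$-degrees $>n$) and the outer $[\PP_\kk^N]$-cap (which kills $\ttt$-degrees $>N$), together with the join that interpolates between them. Making this rigorous reduces to checking the compatibilities $\widetilde{fg}=\widetilde{f}\widetilde{g}$ and the identification of $H$-truncation with $\ttt$-truncation on diagonal series, after which (ii) drops out of (i) mechanically.
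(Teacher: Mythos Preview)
Your proposal is correct and follows essentially the same approach as the paper. Part (i) is identical: the paper also applies \autoref{thm_pullback_Seg} to $\phi_N$, verifies the dimension bound $\dim(B_{\phi_N})=N-n-1<N-m-\sum_i r_i$ from the hypothesis $n\ge m+\sum_i r_i$, and reads off the formula from the explicit Chern/Segre classes of the split bundles. For part (ii) the paper simply says ``this part follows from part (i) and the existence of the mixed Segre zeta function (see \autoref{lem_existence})''; your diagonal bookkeeping with $\widetilde{f}$ and $\Phi_M$ is exactly the mechanical unpacking of that sentence, not a different argument.
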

\begin{proof}
	(i) Since $\dim(B_{\phi_N}) = N-n-1$ and $n \ge m+\sum_{i=1}^{m}r_i$, we get $\dim(B_{\phi_N}) < N -m- \sum_{i=1}^{m}r_i$.
	Therefore \autoref{thm_pullback_Seg} applied to $\phi_N: \PP_\kk^N \dashrightarrow \PP_\kk^n$ gives 
	$$
	{\iota_{N}}_*\left(s(Z_1^N,\ldots,Z_m^N;\PP_\kk^N)\right) \;=\; s_\ttt(\underline{\EE}^N) \,\smallfrown\, \left( \Big( c_\ttt\left(\underline{\EE}\right) \,\smallfrown\, {\iota}_*\left(s(Z_1,\ldots,Z_m;\PP_\kk^n)\right) \Big)  \,\vee\, B_{\phi_N} \right).
	$$
	Furthermore, notice that 
	$$
	s_\ttt(\underline{\mathcal{E}}^N) \;=\; \frac{1}{\prod_{i,j} \left(1+d_{i,j}\,H\,t_i\right)} \;\in\; A^*(\PP_\kk^N)[\ttt] \quad\text{ and } \quad 
	c_\ttt(\underline{\mathcal{E}}) \;=\; \prod_{i,j} \left(1+d_{i,j}\,H\,t_i\right) \;\in\; A^*(\PP_\kk^n)[\ttt].
	$$
	So the result of this part follows.
	
	(ii) This part follows from part (i) and the existence of the mixed Segre zeta function $\zeta_{I_1, \ldots, I_m}(\ttt)$ (see \autoref{lem_existence}).
\end{proof}

Following Fulton \cite[Chapter 12]{FULTON_INTERSECTION_THEORY}, we denote by $A_*^{\ge}(\PP_\kk^n)$ the set of classes that can be represented by nonnegative cycles. 
Denote by $A_*^{\ge}(\PP_\kk^n)[t_1,\ldots,t_m]$ the set of formal polynomials in the variables $t_1,\ldots,t_m$ and with coefficients in $A_*^{\ge}(\PP_\kk^n)$.

\begin{lemma}
	\label{lem_nonnegative}
	We have that 
	$$
	\prod_{i,j}\left(1+d_{i,j}\,H\,t_i\right) \cdot {\iota}_*\left(s\left(Z_1,\ldots,Z_m;\, \PP_\kk^n\right)\right) \;\in\;  A_*^{\ge}(\PP_\kk^n)[t_1,\ldots,t_m]
	$$
	is nonnegative.
\end{lemma}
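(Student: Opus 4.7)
The plan is to rewrite $c_\ttt(\underline{\mathcal{E}}) \smallfrown \iota_*\!\left(s(Z_1,\ldots,Z_m;\PP_\kk^n)\right)$, where $\mathcal{E}_i := \bigoplus_{j=0}^{r_i} \OO_{\PP_\kk^n}(d_{i,j})$ so that $c_\ttt(\underline{\mathcal{E}}) = \prod_{i,j}(1+d_{i,j}Ht_i)$, as the proper pushforward of an effective cycle from the joint blow-up $\mathcal{B}:=\Bl_{Z_1,\ldots,Z_m}(\PP_\kk^n)$. The generating sections $s_i=(f_{i,0},\ldots,f_{i,r_i})$ realize $\mathcal{B}$ as the graph closure of the induced rational map $\PP_\kk^n \dashrightarrow \PP(\underline{\mathcal{E}}):=\PP(\mathcal{E}_1)\times_{\PP_\kk^n}\cdots\times_{\PP_\kk^n}\PP(\mathcal{E}_m)$. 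On $\mathcal{B}$, the lifted section of $b^*\mathcal{E}_i$ yields a line subbundle $\OO_\mathcal{B}(E_i)\hookrightarrow b^*\mathcal{E}_i$, so $c_1(\OO_{\PP(\mathcal{E}_i)}(1))|_\mathcal{B}=-E_i$; combining this with the tautological exact sequence $0 \to \OO(-1) \to q^*\mathcal{E}_i \to \mathcal{Q}_i \to 0$ (with $\mathcal{Q}_i$ globally generated of rank $r_i$) gives on $\mathcal{B}$ the key Whitney identity
\[
(1+E_it_i)\,c_{t_i}(\mathcal{Q}_i|_\mathcal{B})\;=\;c_{t_i}(b^*\mathcal{E}_i)\;=\;\prod_j(1+d_{i,j}Ht_i).
\]

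\textbf{Main computation.} By \autoref{thm_blow_up_form}(i), the projection formula, and the identity above,
\begin{align*}
c_\ttt(\underline{\mathcal{E}}) \smallfrown \iota_*\!\left(s(\underline{Z};\PP_\kk^n)\right)
&= b_*\!\left(\left(1 - \tfrac{1}{\prod_i(1+E_it_i)}\right)b^*c_\ttt(\underline{\mathcal{E}}) \smallfrown [\mathcal{B}]\right)\\
&= b_*\!\left(\Bigl[\prod_i(1+E_it_i)-1\Bigr]\prod_i c_{t_i}(\mathcal{Q}_i|_\mathcal{B})\smallfrown[\mathcal{B}]\right).
\end{align*}
Using the telescoping identity $\prod_i(1+E_it_i)-1=\sum_i E_it_i\prod_{j>i}(1+E_jt_j)$, together with $(1+E_jt_j)c_{t_j}(\mathcal{Q}_j|_\mathcal{B})=b^*c_{t_j}(\mathcal{E}_j)$ for each $j>i$, and one further application of the projection formula, I obtain
\[
c_\ttt(\underline{\mathcal{E}}) \smallfrown \iota_*\!\left(s(\underline{Z};\PP_\kk^n)\right) \;=\; \sum_{i=1}^m \left(\prod_{j>i}c_{t_j}(\mathcal{E}_j)\right)\! t_i \cdot b_*\!\left(E_i \prod_{j\le i} c_{t_j}(\mathcal{Q}_j|_\mathcal{B})\smallfrown[\mathcal{B}]\right).
\]

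\textbf{Positivity and main obstacle.} Each factor $\prod_{j>i}c_{t_j}(\mathcal{E}_j)=\prod_{j>i,k}(1+d_{j,k}Ht_j)$ has manifestly nonnegative integer coefficients in $A_*(\PP_\kk^n)[\ttt]$. In the second factor, $E_i$ is an effective Cartier divisor on $\mathcal{B}$ and each $\mathcal{Q}_j|_\mathcal{B}$ is globally generated as a quotient of the globally generated bundle $b^*\mathcal{E}_j$, so by Fulton's positivity for Chern classes of globally generated bundles \cite[Ch.~12]{FULTON_INTERSECTION_THEORY} the class $c_{l_j}(\mathcal{Q}_j|_\mathcal{B})$ is represented by the effective degeneracy locus of $r_j-l_j+1$ general sections. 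Choosing these sections independently and generically for each $j$, the iterated cap product $E_i\prod_{j\le i}c_{l_j}(\mathcal{Q}_j|_\mathcal{B})\smallfrown[\mathcal{B}]$ is computed by a proper intersection of effective cycles on $\mathcal{B}$, hence is represented by an effective cycle; the proper pushforward $b_*$ then yields an effective class on $\PP_\kk^n$, and combining with the first factor finishes the proof. The principal technical point is this last step: justifying, on the blow-up $\mathcal{B}$ (which carries no transitive group action), that generic sections of the globally generated bundles $\mathcal{Q}_j|_\mathcal{B}$ produce degeneracy loci meeting each other and the fixed divisor $E_i$ in the expected codimension, a Bertini-type transversality statement specific to globally generated bundles.
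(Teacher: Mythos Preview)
Your computation is correct and is essentially the paper's proof with the telescoping taken in the opposite direction: the paper uses \autoref{rem_seg_fulton} to obtain the summand
\[
q_*\Big(c_{t_1}(q^*\EE_1)\cdots c_{t_{k-1}}(q^*\EE_{k-1})\,c_{t_k}(\mathcal{Q}_k)\cdots c_{t_m}(\mathcal{Q}_m)\smallfrown[E_k]t_k\Big),
\]
which after the projection formula agrees with your $i$-th summand under $k\leftrightarrow m+1-i$.

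The ``principal technical point'' you flag is not an obstacle. Fulton's positivity result \cite[Example 12.1.7(a)]{FULTON_INTERSECTION_THEORY} asserts that for a globally generated bundle $E$ and \emph{any} subvariety $V$, the class $c_l(E)\smallfrown[V]$ is represented by a nonnegative cycle, with no hypothesis on codimension or transversality. (A clean proof: a globally generated bundle is pulled back from the tautological quotient on a Grassmannian, where Chern classes are represented by effective Schubert cycles; the pull-back along the classifying morphism is a refined Gysin map, which preserves effectivity regardless of dimension.) You may therefore iterate: $[E_i]=E_i\smallfrown[\mathcal{B}]$ is effective; applying $c_{l_j}(\mathcal{Q}_j|_\mathcal{B})$ for each $j\le i$ in turn preserves effectivity component by component; and $b_*$ preserves effectivity. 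The paper simply cites \cite[Example 12.1.7(a)]{FULTON_INTERSECTION_THEORY} at this juncture, and no Bertini-type argument is needed.
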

\begin{proof}
	Let $\underline{Z} = Z_1,\ldots,Z_m$.
	Consider the quotient bundle $\mathcal{Q}_i := q^*(\EE_i) / \OO_{\PP(\underline{\EE})}(-\ee_i)$ on $\PP(\underline{\EE})$ of rank $r_i$.	
	By utilizing \autoref{rem_seg_fulton} and the projection and Whitney sum formulas, we obtain
	\begin{align*}
	c_\ttt(\underline{\mathcal{E}}) \,\smallfrown\, {\iota}_*\left(s\left(\underline{Z};\, \PP_\kk^n\right)\right) &\;=\;
	\sum_{k=1}^m\,
	c_\ttt(\underline{\mathcal{E}}) \,\smallfrown\, q_*\left(
	\frac{1}{\prod_{i=k}^mc_{t_i}\left(\OO_{\PP(\underline{\EE})}(-\ee_i)\right)} \,\smallfrown\,
	\left[E_k\right]t_k\right)\\
	&\;=\;
	\sum_{k=1}^m q_*\Big(c_{t_1}(q^*(\EE_1)) \cdots c_{t_{k-1}}(q^*(\EE_{k-1}))  \,c_{t_k}(\mathcal{Q}_k)\cdots c_{t_m}(\mathcal{Q}_m) \,\smallfrown\,
	\left[E_k\right]t_k\Big).
	\end{align*}
	Since $\mathcal{Q}_i$  is a quotient of the globally generated vector bundle $q^*(\EE_i)$, it is also globally generated.
	Recall that $c_\ttt(\underline{\mathcal{E}}) = \prod_{i,j} \left(1+d_{i,j}\,H\,t_i\right)$. 
	Therefore the nonnegativity claim follows  from \cite[Example 12.1.7(a)]{FULTON_INTERSECTION_THEORY}.
\end{proof}

Finally, we are ready to prove \autoref{thm_rationality}.

\begin{proof}[Proof of \autoref{thm_rationality}]
	Due to \autoref{lem_existence}, we may assume $n \ge m+ \sum_{i=1}^{m}r_i$.
	Thus \autoref{lem_truncation} yields 
	$$
	{\rm T}_N\left(\zeta_{I_1, \ldots, I_m}(\ttt)\right) \;=\; {\rm T}_N\left(
	\frac{
		{\rm T}_n\Big(
		\prod_{i,j} \left(1+d_{i,j}\,t_i\right)	\zeta_{I_1, \ldots, I_m}(\ttt)
		\Big)
	}{\prod_{i,j} \left(1+d_{i,j}\,t_i\right)}
	\right)
	$$
	for all $N \ge n$.
	Since $P(\ttt):={\rm T}_n\big(
	\prod_{i,j} \left(1+d_{i,j}\,t_i\right)	\zeta_{I_1, \ldots, I_m}(\ttt)
	\big)$ is a polynomial not depending on $N$, we obtain the equality
	$$
	\zeta_{I_1, \ldots, I_m}(\ttt) \;=\; \frac{P(\ttt)}{\prod_{i,j} \left(1+d_{i,j}\,t_i\right)}.
	$$
	This shows the rationality of the mixed Segre zeta function $\zeta_{I_1, \ldots, I_m}(\ttt)$.
	The fact that $P(\ttt)$ has nonnegative coefficients follows from \autoref{lem_nonnegative}.
\end{proof}

The next proposition yields a mixed formula relating the mixed Segre zeta function of the ideals $I_1,\ldots,I_m$ with the Segre zeta function of their product. 

\begin{proposition}[Mixed formula]
	\label{prop_mixed_formula}
	Consider the product ideal $I = I_1\cdots I_m \subset R$.
	Write 
	$$
	\zeta_{I_1, \ldots, I_m}(t_1,\ldots,t_m) \;=\; \sum_{i_1,\ldots,i_m \in \NN} \, a_{i_1,\ldots,i_m} \, t_1^{i_1}\cdots t_m^{i_m}
	$$
	and define the power series
	$$
	\widetilde{\zeta}_{I_1, \ldots, I_m}(t_1,\ldots,t_m) \;:=\; \sum_{i_1,\ldots,i_m \in \NN} \, \frac{(i_1+\cdots+i_m)!}{i_1!\cdots i_m!}\, a_{i_1,\ldots,i_m} \, t_1^{i_1}\cdots t_m^{i_m} \;\in\; \ZZ[\![t_1,\ldots,t_m]\!].
	$$
	Then we obtain the equality 
	$$
	\zeta_I(t) \;=\; \widetilde{\zeta}_{I_1, \ldots, I_m}(t,\ldots,t).
	$$
\end{proposition}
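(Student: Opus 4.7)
The plan is to reduce the claimed identity to the mixed formula of \autoref{cor_mixed_formula} applied to the extended closed subschemes $Z_1^N,\ldots,Z_m^N$ in $\PP_\kk^N$, and then to repackage this as a statement about coefficients of the two power series.

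First, I would fix $N \ge n$ and apply \autoref{cor_mixed_formula} in $X = \PP_\kk^N$ to the subschemes $Z_1^N,\ldots,Z_m^N \subset \PP_\kk^N$, noting that $Z^N \subset \PP_\kk^N$ is the closed subscheme defined by the product $I_1^N \cdots I_m^N$ of the extended ideals. This yields, for each $r \ge 0$, the equality
$$
s^r\left(Z^N;\, \PP_\kk^N\right) \;=\; \sum_{i_1+\cdots+i_m = r} \frac{r!}{i_1!\cdots i_m!}\, s^{i_1,\ldots,i_m}\!\left(Z_1^N,\ldots,Z_m^N;\,\PP_\kk^N\right) \;\in\; A_{N-r}\!\left(Z^N\right).
$$
Pushing forward along $\iota_N : Z^N \hookrightarrow \PP_\kk^N$ and using the definition of the two zeta functions, the left-hand side equals $c_r\, H^r \smallfrown [\PP_\kk^N]$, where $\zeta_I(t) = \sum_{r \in \NN} c_r t^r$, while each term on the right-hand side equals $a_{i_1,\ldots,i_m} H^{i_1+\cdots+i_m} \smallfrown [\PP_\kk^N]$ with $i_1+\cdots+i_m=r$.

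Next I would choose $N$ large enough (say $N \ge r$) so that $H^r \smallfrown [\PP_\kk^N]$ is a nonzero class in $A_{N-r}(\PP_\kk^N)$, hence serves as a free generator; this allows me to read off the identity of integers
$$
c_r \;=\; \sum_{i_1+\cdots+i_m=r} \frac{r!}{i_1!\cdots i_m!}\, a_{i_1,\ldots,i_m}.
$$
Finally, I would observe that substituting $t_1 = \cdots = t_m = t$ into the defining expression for $\widetilde{\zeta}_{I_1,\ldots,I_m}$ and grouping terms by $r = i_1 + \cdots + i_m$ gives precisely this sum as the coefficient of $t^r$, so $\zeta_I(t) = \widetilde{\zeta}_{I_1,\ldots,I_m}(t,\ldots,t)$ as power series.

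There is no serious obstacle: the entire argument is an application of \autoref{cor_mixed_formula} combined with the existence statement for mixed Segre zeta functions (\autoref{lem_existence}), which guarantees that the numbers $a_{i_1,\ldots,i_m}$ and $c_r$ are well defined independently of $N$. The only point requiring a little care is ensuring that the push-forward along $\iota_N$ of the mixed Segre class is indeed read off in the hyperplane basis as a single integer multiple of $H^{i_1+\cdots+i_m}\smallfrown[\PP_\kk^N]$, which is built into the definition of $\zeta_{I_1,\ldots,I_m}$ for $i_1+\cdots+i_m \le N$.
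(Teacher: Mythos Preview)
Your argument is correct and is precisely the approach the paper takes: its proof is the single line ``The result follows from \autoref{cor_mixed_formula},'' and your proposal just spells out that reduction explicitly, together with the routine passage from the class identity in $A_*(\PP_\kk^N)$ to the coefficient identity via \autoref{lem_existence}.
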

\begin{proof}
	The result follows from \autoref{cor_mixed_formula}.
\end{proof}

Another interesting result about mixed Segre zeta functions is that they only depend on the integral closure of the ideals $I_1,\ldots,I_m$.

\begin{proposition}[Invariance under integral dependence]
	\label{prop_integral_dep}
	We have the equality 
	$$
	\zeta_{I_1, \ldots, I_m}(t_1,\ldots,t_m) \;=\;  \zeta_{\overline{I_1}, \ldots, \overline{I_m}}(t_1,\ldots,t_m),
	$$
	where $\overline{I_i} \subset R$ denotes the integral closure of $I_i$.
\end{proposition}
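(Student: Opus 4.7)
The plan is to reduce the statement to the invariance of the pushforward mixed Segre class ${\iota}_*(s(Z_1,\ldots,Z_m;X)) \in A_*(X)[\ttt]$ under replacing each $I_i$ by $\overline{I_i}$, and then to derive this invariance directly from the blow-up formula \autoref{thm_blow_up_form}(i). By \autoref{lem_existence}, the mixed Segre zeta function is determined by the pushforwards ${\iota_N}_*(s(Z_1^N,\ldots,Z_m^N; \PP_\kk^N))$ for $N \geq n$; and since $R \hookrightarrow R^N$ is a polynomial extension, integral closure of ideals commutes with this extension, so $\overline{I_i R^N} = \overline{I_i} R^N$. It therefore suffices to prove
$$
{\iota}_*\bigl(s(Z_1,\ldots,Z_m; X)\bigr) \;=\; \overline{\iota}_*\bigl(s(\overline{Z}_1,\ldots,\overline{Z}_m; X)\bigr) \qquad \text{in } A_*(X)[\ttt]
$$
for $X = \PP_\kk^n$, where $\overline{Z}_i = V(\overline{I_i}) \subset X$ and $\overline{\iota}$ denotes the inclusion of $V(\overline{I_1}\cdots\overline{I_m})$ into $X$.

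The central step is a direct comparison of the two joint blow-ups. Write $\mathcal{B} = \fMProj_X \sR_{\OO_X}(I_1,\ldots,I_m)$ and $\mathcal{B}' = \fMProj_X \sR_{\OO_X}(\overline{I_1},\ldots,\overline{I_m})$, with blow-down maps $b$ and $b'$ and exceptional divisors $E_i = V(I_i \OO_{\mathcal{B}})$ and $E_i' = V(\overline{I_i} \OO_{\mathcal{B}'})$. The integrality of each $\overline{I_i}$ over $I_i$ supplies integers $k_i \geq 1$ with $\overline{I_i}^{k_i} = I_i \overline{I_i}^{k_i - 1}$, and from this one checks that the inclusion of multi-Rees algebras $\sR_{\OO_X}(I_1,\ldots,I_m) \hookrightarrow \sR_{\OO_X}(\overline{I_1},\ldots,\overline{I_m})$ is a finite ring extension. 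Taking $\fMProj$ yields a finite birational morphism $a \colon \mathcal{B}' \to \mathcal{B}$ with $b \circ a = b'$. Moreover, $a^*E_i = E_i'$ as Cartier divisors: indeed, the ideal sheaf $I_i\OO_{\mathcal{B}'}$ is contained in, and integral over, the invertible ideal sheaf $\overline{I_i}\OO_{\mathcal{B}'}$, and is therefore equal to it.

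To conclude, note that $a_*([\mathcal{B}']_n) = [\mathcal{B}]_n$ because $a$ is birational of degree one, so the projection formula gives
$$
b'_*\Biggl(\prod_{i=1}^m \frac{1}{1+E_i' t_i} \smallfrown [\mathcal{B}']_n\Biggr) \;=\; (b \circ a)_*\Biggl(\prod_{i=1}^m \frac{1}{1+a^*E_i\, t_i} \smallfrown [\mathcal{B}']_n\Biggr) \;=\; b_*\Biggl(\prod_{i=1}^m \frac{1}{1+E_i t_i} \smallfrown [\mathcal{B}]_n\Biggr).
$$
The desired equality of pushforward mixed Segre classes then follows at once from \autoref{thm_blow_up_form}(i). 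The main technical hurdle of this plan is the compatibility $a^*E_i = E_i'$ between the two systems of exceptional divisors; once it is established, the projection formula transports the entire computation from $\mathcal{B}'$ to $\mathcal{B}$ without further difficulty.
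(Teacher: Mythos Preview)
Your argument is correct and matches the paper's approach: both construct the finite birational morphism $\overline{\mathcal{B}}\to\mathcal{B}$ with compatible exceptional divisors (via $I_i\overline{I_i}^{k}=\overline{I_i}^{k+1}$ for $k\gg 0$) and then transport the Segre data using the projection formula, the only difference being that the paper packages this step through \autoref{thm_funct_mixed_Segre}(i) to obtain the equality already in $A_*(Z)$, whereas you invoke \autoref{thm_blow_up_form}(i) directly and work in $A_*(X)$. One minor wording slip: in justifying $a^*E_i=E_i'$, the useful direction of integrality is that $\overline{I_i}\OO_{\mathcal{B}'}$ is integral over $I_i\OO_{\mathcal{B}'}$ (not the reverse), and this together with invertibility of $\overline{I_i}\OO_{\mathcal{B}'}$ is what forces equality---exactly the relation $\overline{I_i}^{k_i}=I_i\overline{I_i}^{k_i-1}$ you already recorded.
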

\begin{proof}
	Consider the closed subschemes $\overline{Z_i}  =V(\overline{I_i}) \subset \PP_\kk^n$ and $\overline{Z}  =V(\overline{I_1}\cdots\overline{I_m}) \subset \PP_\kk^n$ and the joint blow-up $\overline{b} : \overline{\mathcal{B}} = \Bl_{\overline{Z_1},\ldots,\overline{Z_m}}(\PP_\kk^n) \rightarrow \PP_\kk^n$.
	Let $\overline{E_i}$ be the pull-back to $\overline{\mathcal{B}}$ of the exceptional divisor of the blow-up $\Bl_{\overline{Z_i}}(\PP_\kk^n)$.
	Let $E = E_1+\cdots+E_m$ and $\overline{E} = \overline{E_1}+\cdots+\overline{E_m}$, and consider the natural morphisms $\psi : \overline{E} \rightarrow E$, $\eta : E \rightarrow Z$ and $\overline{\eta} : \overline{E}  \rightarrow \overline{Z}$.
	Under the  finite birational morphism $\overline{\mathcal{B}} \rightarrow \mathcal{B}$ the pull-back of $E_i$ is equal to $\overline{E_i}$; indeed, recall that $I_i{\overline{I_i}}^k = {\overline{I_i}}^{k+1}$ for $k \gg 0$. 
	Since $Z$ and $\overline{Z}$ have the same support, we may identify their Chow groups. 
	Then \autoref{thm_funct_mixed_Segre}(i) yields
	\begin{align*}
		s\left(Z_1,\ldots,Z_m;\,\PP_\kk^n\right) &\;=\; \eta_*\left(s\left(E_1,\ldots,E_m;\, \mathcal{B}\right)\right)\\
		&\;=\; \eta_*\left(\psi_*\left(s\left(\overline{E_1},\ldots,\overline{E_m};\, \overline{\mathcal{B}}\right)\right)\right) \\
		&\;=\; \overline{\eta}_*\left(s\left(\overline{E_1},\ldots,\overline{E_m};\, \overline{\mathcal{B}}\right)\right)\\
		&\;=\; s\left(\overline{Z_1},\ldots,\overline{Z_m};\,\PP_\kk^n\right) .
	\end{align*}
	The result of the proposition follows because $\overline{I_i} R^N = \overline{I_iR^N}$ for any $N \ge n$.
\end{proof}

\section{Log-concavity of mixed Segre zeta functions}
\label{sect_Lorentzian}

In this section,  we explore a surprising log-concavity behavior that mixed Segre zeta functions have.
Here we show that, for arbitrary homogeneous ideals $I_1,\ldots,I_m$,  the homogenization of the numerator of the modified mixed Segre zeta function $1-\zeta_{I_1,\ldots,I_m}(t_1,\ldots,t_m)$ is denormalized Lorentzian.

We first recall the notion of Lorentzian polynomials by Br\"and\'en and Huh \cite{BrandenHuh}.
Let $h(t_1,\dots,t_m)$ be a homogeneous polynomial of degree $d$ in $\RR[\ttt]=\RR[t_1,\ldots,t_m]$.

\begin{definition}[\cite{BrandenHuh}]
	The homogeneous polynomial $h$ is called \emph{Lorentzian} if the following conditions hold:
	\begin{enumerate}[\rm (i)]
		\item The coefficients of $h$ are nonnegative.
		\item The support of $h$ is M-convex.
		\item The quadratic form $\partial_{t_{i_1}}\partial_{t_{i_2}}\cdots \partial_{t_{i_e}}(h)$ has at most one positive eigenvalue for any $1 \le i_1 \le \cdots \le i_e \le m$ where $e = d-2$.
	\end{enumerate}		
\end{definition}
There are several linear operators discussed in \cite{BrandenHuh} that preserve the Lorentzian property. In particular, the normalization operator 
$$
N\left(\sum_{\bn} a_\bn\ttt^\bn\right) \; := \;\sum_{\bn} \frac{a_\bn}{\bn!}\ttt^\bn \qquad \text{ where }  \qquad \bn! \;:=\; n_1!\cdots n_m!,
$$ 
preserves the Lorentzian property \cite[Corollary 3.7]{BrandenHuh}.
A (not necessarily Lorentzian) polynomial $h \in \RR[\ttt] = \RR[t_1,\ldots,t_m]$ is called \emph{denormalized Lorentzian} if its normalization $N(h) \in \RR[\ttt]$ is a Lorentzian polynomial.

The next proposition is our main technical tool to show the Lorentzian property of a polynomial. 
This result yields a large class of Lorentzian polynomials by considering the total Chern classes of a sequence of globally generated vector bundles on an irreducible variety.
Similar results have appeared in \cite[Proposition 3.9]{aluffi2024lorentzian} and in \cite[\S 9]{BEST}.

\begin{proposition}
	\label{prop_Lorentzian}
	Let $\sX$ be an $N$-dimensional irreducible variety over an algebraically closed field $\kk$ and $q : \sX \rightarrow \PP_\kk^n$ be a proper morphism.
	Let $\mathcal{Q}_1,\ldots,\mathcal{Q}_m$ be globally generated vector bundles on $\sX$. 
	Consider the ring presentation $A^*(\PP_\kk^n)[t_1,\ldots,t_m] \cong \ZZ[H,t_1,\ldots,t_m]/\left(H^{n+1}\right)$ and write 
	$$
	q_*\Big(c_{t_1}(\mathcal{Q}_1) \cdots c_{t_m}(\mathcal{Q}_m) \,\smallfrown\, \left[\sX\right]\Big) \;=\; \sum_{i_1+\cdots+i_m\le N}  a_{i_1,\ldots,i_m} \, t_1^{i_1}\cdots t_m^{i_m}\, H^{n-N+(i_1+\cdots+i_m)} \,\smallfrown\, \left[\PP_\kk^n\right]
	$$
	{\rm(}as before, elements in $A^*(\PP_\kk^n)[t_1,\ldots,t_m]$ are seen as formal polynomial in the variables $t_1,\ldots,t_m$ and with coefficients in $A_*(\PP_\kk^n)${\rm)}.
	Then the polynomial 
	$$
	\sum_{i_1+\cdots+i_m\le N}  a_{i_1,\ldots,i_m} \, t_0^{N-(i_1+\cdots+i_m)} \, t_1^{i_1}\cdots t_m^{i_m} \;\,\in\;\,  \NN\left[t_0,t_1,\ldots,t_m\right]
	$$
	is denormalized Lorentzian.
\end{proposition}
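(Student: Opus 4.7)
The plan is to realize the polynomial $P(t_0,\ldots,t_m)$ as a specialization of a volume polynomial of nef divisor classes on a suitable projective variety built from $\sX$, and then to conclude by the Br\"and\'en--Huh theorem \cite[Theorem 4.16]{BrandenHuh} that volume polynomials of nef divisor classes on an irreducible projective variety are Lorentzian, together with their theory of Lorentzian-preserving linear operators.

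First, I would reduce to the case in which $\sX$ is projective. By Chow's lemma there exists a projective birational modification $\sX' \to \sX$; the pullbacks of $\mathcal{Q}_1,\ldots,\mathcal{Q}_m$ remain globally generated, and by the projection formula the intersection numbers $a_{i_1,\ldots,i_m}$ are unchanged. Thus we may assume $\sX$ is projective. Next, let $\pi \colon Y \to \sX$ denote the fibre product $\operatorname{Fl}(\mathcal{Q}_1) \times_\sX \cdots \times_\sX \operatorname{Fl}(\mathcal{Q}_m)$ of complete flag bundles. On $Y$, each $\pi^*\mathcal{Q}_i$ carries a tautological complete filtration with line bundle quotients $L_{i,1},\ldots, L_{i,r_i}$; since each $L_{i,j}$ is a successive quotient of the globally generated $\pi^*\mathcal{Q}_i$, it is itself globally generated, so the classes $\alpha_{i,j} := c_1(L_{i,j})$ and $\alpha_0 := \pi^*q^*H$ are all nef divisor classes on $Y$.

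Using the splitting principle $\pi^* c_k(\mathcal{Q}_i) = e_k(\alpha_{i,1},\ldots,\alpha_{i,r_i})$ together with the identity $\pi_*(\prod_i \delta_i) = 1$ for $\delta_i := \prod_{j=1}^{r_i} \alpha_{i,j}^{r_i-j}$ (the top class of the fibre of $\operatorname{Fl}(\mathcal{Q}_i)$), the coefficients of $P$ rewrite as
\[
a_{i_1,\ldots,i_m} \;=\; \int_Y \alpha_0^{i_0} \cdot \prod_k e_{i_k}(\alpha_{k,1},\ldots,\alpha_{k,r_k}) \cdot \prod_k \delta_k,
\]
i.e., positive combinations of top intersection numbers of nef divisor classes on $Y$. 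Introducing the variables $(s_0,\{s_{k,j}\})$, one forms
\[
\widetilde{P}(s_0,\{s_{k,j}\}) \;:=\; \int_Y \prod_k \delta_k \cdot \frac{1}{1-s_0\alpha_0} \cdot \prod_{k,j}(1+s_{k,j}\alpha_{k,j}),
\]
which specializes to $P(t_0,\ldots,t_m)$ under $s_0 \mapsto t_0$ and $s_{k,j} \mapsto t_k$. The homogenization of $\widetilde{P}$ is (a constant multiple of) a volume polynomial of the nef divisors $\alpha_0,\alpha_{k,j}$ on $Y$, weighted by the nef multiplier $\prod_k \delta_k$ (which is represented by an irreducible subvariety isomorphic to $\sX$, corresponding to a fixed relative flag). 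By \cite[Theorem 4.16]{BrandenHuh}, this volume polynomial is Lorentzian, so $\widetilde{P}$ is denormalized Lorentzian.

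The principal obstacle is to transfer the denormalized Lorentzian property from $\widetilde{P}$ to $P$ along the variable identification $s_{k,j} \mapsto t_k$. In general, identifying variables in a Lorentzian polynomial does not preserve the Lorentzian property, but the substitution here is symmetric within each block $\{s_{k,1},\ldots,s_{k,r_k}\}$, and such block-symmetric identifications are Lorentzian-preserving linear operators in the framework of \cite[\S 3]{BrandenHuh}. Parallel arguments appear in \cite[Proposition 3.9]{aluffi2024lorentzian} and \cite[\S 9]{BEST}, which directly guide the final specialization step.
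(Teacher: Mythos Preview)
Your approach via the splitting principle on flag bundles contains a genuine gap: the assertion that the tautological line bundle subquotients $L_{i,j}$ on $Y = \operatorname{Fl}(\mathcal{Q}_1) \times_\sX \cdots \times_\sX \operatorname{Fl}(\mathcal{Q}_m)$ are globally generated is false. In the universal filtration $0 = F_0 \subset F_1 \subset \cdots \subset F_{r_i} = \pi^*\mathcal{Q}_i$, only the top quotient $L_{i,r_i} = \pi^*\mathcal{Q}_i / F_{r_i-1}$ is a genuine quotient of $\pi^*\mathcal{Q}_i$; the intermediate $L_{i,j} = F_j/F_{j-1}$ are merely subquotients and do not inherit global generation. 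Already for $\mathcal{Q} = \OO^{\oplus 2}$ over a point the flag bundle is $\PP_\kk^1$, and the two Chern roots are $c_1(\OO(1))$ and $c_1(\OO(-1))$; the second is not nef. Hence the classes $\alpha_{i,j}$ are not nef on $Y$, and \cite[Theorem~4.16]{BrandenHuh} does not apply to your $\widetilde{P}$. A secondary problem is that the class $\prod_k \delta_k$ is not in general represented by an irreducible subvariety of $Y$ (a flag bundle has no global section unless the bundle splits), so the interpretation of $\widetilde{P}$ as a volume polynomial on a subvariety also breaks down.

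The paper sidesteps both issues by exploiting global generation differently. From surjections $\OO_\sX^{\ell_i+1} \twoheadrightarrow \mathcal{Q}_i$ with kernels $\mathcal{K}_i$ one passes to $\sP = \sX \times_\kk \PP_\kk^{\ell_1} \times_\kk \cdots \times_\kk \PP_\kk^{\ell_m}$ and realizes $\sK := \PP(\mathcal{K}_1) \times_\sX \cdots \times_\sX \PP(\mathcal{K}_m)$ as an honest irreducible subvariety of $\sP$, cut out by a regular section whose top Chern class unfolds into the products $c_{i_1}(\mathcal{Q}_1)\cdots c_{i_m}(\mathcal{Q}_m)$. Pushing $\sK$ forward to $\PP_\kk^n \times_\kk \PP_\kk^{\ell_1} \times_\kk \cdots \times_\kk \PP_\kk^{\ell_m}$ places everything in a product of projective spaces, where the only divisor classes in play are pullbacks of hyperplane classes, which are automatically nef. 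The volume polynomial of the image of $\sK$ is then Lorentzian by \cite[Theorem~4.16]{BrandenHuh}, and the target polynomial is extracted by applying $\partial_{t_1}^{\ell_1-r_1}\cdots\partial_{t_m}^{\ell_m-r_m}$, an operation that preserves the Lorentzian property. No variable identification step is required.
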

\begin{proof}
	Let $r_i = \rank(\mathcal{Q}_i)$ be the rank of $\mathcal{Q}_i$. 
	Since $\mathcal{Q}_i$ is globally generated, we obtain a short exact sequence 
	$$
	0 \;\rightarrow\; \mathcal{K}_i \;\rightarrow\; \OO_{\sX}^{\ell_i+1} \;\rightarrow\; \mathcal{Q}_i \;\rightarrow\; 0
	$$
	of vector bundles on $\sX$, and we may assume that $\ell_i \ge r_i$.
	Consider the irreducible variety
	$$
	\mathscr{P} \;:=\; \PP\big(\OO_{\sX}^{\ell_1+1}\big) \times_{\sX} \cdots \times_{\sX} \PP\big(\OO_{\sX}^{\ell_m+1}\big) \;\cong\; \sX \times_\kk \PP_\kk^{\ell_1} \times_\kk \cdots \times_\kk \PP_\kk^{\ell_m}
	$$
	and the natural projections $p : \sP \rightarrow \sX$ and $p_i : \sP \rightarrow \PP_\kk^{\ell_i}$.
	Let $H_i$ be class of a hyperplane in $\PP_\kk^{\ell_i}$ and notice that $p_i^*H_i = c_1(\OO_{\sP}(\ee_i))$.
	The irreducible subvariety $\sK := \PP(\mathcal{K}_1) \times_{\sX} \cdots \times_{\sX} \PP(\mathcal{K}_m) \subset \sP$ is given as the zero-scheme of the regular section 
	$$
	\OO_\sP \;\rightarrow\; \big(p^*(\mathcal{Q}_1) \otimes \OO_\sP(\ee_1)\big) \,\oplus\, \cdots \,\oplus\, \big(p^*(\mathcal{Q}_m) \otimes \OO_\sP(\ee_m)\big);
	$$
	see \cite[B.5.6, Example 3.2.17]{FULTON_INTERSECTION_THEORY} or \cite[Proposition 9.13]{EH}.
	By \cite[Example 3.2.16]{FULTON_INTERSECTION_THEORY} and the Whitney sum formula, we obtain 
	\begin{align*}
		\left[\sK\right] &\;=\; c_{r_1+\cdots+r_m}\left(\big(p^*(\mathcal{Q}_1) \otimes \OO_\sP(\ee_1)\big) \,\oplus\, \cdots \,\oplus\, \big(p^*(\mathcal{Q}_m) \otimes \OO_\sP(\ee_m)\big)\right) \,\smallfrown\, \left[\sP\right] \\
		&\;=\; c_{r_1}\left(\big(p^*(\mathcal{Q}_1) \otimes \OO_\sP(\ee_1)\big)\right)  \cdots  c_{r_m}\left(\big(p^*(\mathcal{Q}_m) \otimes \OO_\sP(\ee_m)\big)\right) \,\smallfrown\, \left[\sP\right] \\
		&\;=\; \sum_{0 \le i_j \le r_j} \left(p_1^*H_1\right)^{r_1-i_1}\cdots \left(p_m^*H_m\right)^{r_m-i_m} \, c_{i_1}\left(p^*(\mathcal{Q}_1)\right) \cdots c_{i_m}\left(p^*(\mathcal{Q}_m)\right) \,\smallfrown\, \left[\sP\right].
	\end{align*}
	Consider the proper morphism 
	$$
	f = q \times {\rm id} \;:\; \sP = \sX  \times_\kk \PP_\kk^{\ell_1} \times_\kk \cdots \times_\kk \PP_\kk^{\ell_m} \;\rightarrow\; \PP_\kk^n \times_\kk \PP_\kk^{\ell_1} \times_\kk \cdots \times_\kk \PP_\kk^{\ell_m} =: \PP.
	$$
	Let $\pi : \PP \rightarrow \PP_\kk^n$ and $\pi_i : \PP \rightarrow \PP_\kk^{\ell_i}$ be the natural projections. 
	We have the following commutative diagram
		\begin{equation*}
		\begin{tikzpicture}[baseline=(current  bounding  box.center)]
			\matrix (m) [matrix of math nodes,row sep=4em,column sep=7em,minimum width=2em, text height=1.5ex, text depth=0.25ex]
			{
				\mathscr{P} \cong \sX \times_\kk \PP_\kk^{\ell_1} \times_\kk \cdots \times_\kk \PP_\kk^{\ell_m} & &  \PP = \PP_\kk^n \times_\kk \PP_\kk^{\ell_1} \times_\kk \cdots \times_\kk \PP_\kk^{\ell_m}\\
				& \PP_\kk^{\ell_i} & \\
				\sX & & \PP_\kk^n.\\
			};
			\path[-stealth]
			(m-1-1) edge node [above] {$f$} (m-1-3)
			(m-1-1) edge node [left] {$p$} (m-3-1)
			(m-1-1) edge node [above] {$p_i$} (m-2-2)
			(m-3-1) edge node [above] {$q$} (m-3-3)
			(m-1-3) edge node [right] {$\pi$} (m-3-3)
			(m-1-3) edge node [above] {$\pi_i$} (m-2-2)
			;
		\end{tikzpicture}	
	\end{equation*}
	The projection and pull-back formulas and \cite[Proposition 1.7]{FULTON_INTERSECTION_THEORY} yield
		\begin{align*}
		f_*\left(\left[\sK\right]\right) 
		&\;=\; f_*\left(\sum_{0 \le i_j \le r_j} \left(p_1^*H_1\right)^{r_1-i_1}\cdots \left(p_m^*H_m\right)^{r_m-i_m} \, c_{i_1}\left(p^*(\mathcal{Q}_1)\right) \cdots c_{i_m}\left(p^*(\mathcal{Q}_m)\right) \,\smallfrown\, \left[\sP\right]\right) \\
		&\;=\; f_*\left(\sum_{0 \le i_j \le r_j} \left(f^*\pi_1^*H_1\right)^{r_1-i_1}\cdots \left(f^*\pi_m^*H_m\right)^{r_m-i_m} \, p^*\Big(c_{i_1}\left(\mathcal{Q}_1\right) \cdots c_{i_m}\left(\mathcal{Q}_m\right) \,\smallfrown\, \left[\sX\right]\Big)\right) \\
		&\;=\; \sum_{0 \le i_j \le r_j} \left(\pi_1^*H_1\right)^{r_1-i_1}\cdots \left(\pi_m^*H_m\right)^{r_m-i_m} \, \pi^*\left(q_*\Big(c_{i_1}\left(\mathcal{Q}_1\right) \cdots c_{i_m}\left(\mathcal{Q}_m\right) \,\smallfrown\, \left[\sX\right]\Big)\right).
	\end{align*}
	Hence by assumption we obtain 
	$$
	f_*\left(\left[\sK\right]\right)  \;=\; \sum_{0 \le i_j \le r_j} a_{i_1,\ldots,i_m}\left(\pi_1^*H_1\right)^{r_1-i_1}\cdots \left(\pi_m^*H_m\right)^{r_m-i_m} \, \left(\pi^*H\right)^{n-N+(i_1+\cdots+i_m)} \,\smallfrown\, \left[\PP\right].
	$$
	Let $h = \pi^*H$, $h_i = \pi_i^*H_i$, and $d = \dim(\sK) = N + (\ell_1+\cdots+\ell_m)-(r_1+\cdots+r_m)$.
	Consider the irreducible subvariety $f(\sK) \subset \PP$ and write $f_*\left(\left[\sK\right]\right) = e \left[f\left(\sK\right)\right]$ for some nonnegative integer $e \ge 0$.
 	By \cite[Theorem 4.16]{BrandenHuh}, the volume polynomial 
	$$
	V(\ttt) \;:=\; \int \left(ht_0+h_1t_1+\cdots+h_mt_m\right)^d \,\smallfrown\, \left[f(\sK)\right] \;\;\in\;\; \NN[t_0,t_1,\ldots,t_m]
	$$
	is Lorentzian.
	We now expand to obtain the following equalities
	\begin{align*}
	eV(\ttt) &\;=\;	\int \left(ht_0+h_1t_1+\cdots+h_mt_m\right)^d \,\smallfrown\, f_*\left(\left[\sK\right]\right) \\
	&\;=\;	\int \sum_{i_0+i_1+\cdots+i_m=d}\frac{d!}{i_0!i_1!\cdots i_m!}\, h^{i_0}h_1^{i_1}\cdots h_m^{i_m}\,t_0^{i_0}t_1^{i_1}\cdots t_m^{i_m} \,\smallfrown\, f_*\left(\left[\sK\right]\right) \\
	&\;=\; d!\sum_{i_0+i_1+\cdots+i_m=d}\frac{1}{i_0!i_1!\cdots i_m!}\, a_{i_1+r_1-\ell_1,\,\ldots,\,i_m+r_m-\ell_m}  \,\,t_0^{i_0}t_1^{i_1}\cdots t_m^{i_m}.
	\end{align*}
	As taking derivatives preserves Lorentzian polynomials, it follows that the following polynomial 
	$$
	\frac{e}{d!} \partial_{t_1}^{\ell_1-r_1}\cdots\partial_{t_m}^{\ell_m-r_m}\left(V(\ttt)\right) \;=\; \sum_{i_0+i_1+\cdots+i_m=N}\frac{1}{i_0!i_1!\cdots i_m!}\, a_{i_1,\,\ldots,\,i_m}  \,\,t_0^{i_0}t_1^{i_1}\cdots t_m^{i_m}
	$$ 
	is also Lorentzian. 
	This concludes the proof of the proposition.
\end{proof}

The following theorem is the main result of this section. 
It shows that the homogenization of the numerator of $1 - \zeta_{I_1, \ldots, I_m}(t_1,\ldots,t_m)$ is denormalized Lorentzian.

\begin{theorem}
	\label{thm_Lorentzian}
	Let $R = \kk[x_0,\ldots,x_n]$ be a polynomial ring over an algebraically closed field $\kk$. 
	Let $I_1,\ldots,I_m \subset R$ be homogeneous ideals in $R$.
	Let $f_{i, 0}, f_{i, 1}, \dotsc, f_{i, r_i} \in R$ be homogeneous generators of $I_i$ with $d_{i,j} = \deg\left(f_{i,j}\right)$.
	Define the polynomial $Q(t_1,\ldots,t_m)$ by the identity
	$$
	1 - \zeta_{I_1, \ldots, I_m}(t_1,\ldots,t_m) \;=\; \frac{Q(t_1,\ldots,t_m)}{\;\prod_{1 \le i \le m} \left(1+d_{i,0}t_i\right)\left(1+d_{i,1}t_i\right)\cdots\left(1+d_{i,r_i}t_i\right)\;}.
	$$
	Then the homogenization of $Q(t_1,\ldots,t_m)$ is a denormalized Lorentzian polynomial.
\end{theorem}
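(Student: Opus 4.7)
The plan is to apply \autoref{prop_Lorentzian} to the blow-up $\sX := \Bl_{\underline{Z}^M}(\PP_\kk^M)$ for a sufficiently large $M$, together with the globally generated rank-$r_i$ quotient bundles $\mathcal{Q}_i^M := q_M^*(\EE_i^M)/\OO_{\PP(\underline{\EE}^M)}(-\ee_i)$ restricted to $\sX$, where $\EE_i^M = \bigoplus_{j=0}^{r_i}\OO_{\PP_\kk^M}(d_{i,j})$ is globally generated because each $d_{i,j} > 0$. The strategy is to identify, up to multiplication by a power of an auxiliary variable $t_0$, the homogenization $Q^h$ with the Lorentzian polynomial produced by that proposition.

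I would fix an integer $M\geq\max\{n,\deg Q\}$ and rewrite the ``shadow'' class $L_\ttt(\underline{Z}^M;\PP_\kk^M)$ of \autoref{sect_funct_Segre} in two complementary ways. On the one hand, the projection formula applied to the definition of $L_\ttt$, together with the Whitney sum formula for the tautological exact sequence $0\to\OO_{\PP(\underline{\EE}^M)}(-\ee_i)\to q_M^*(\EE_i^M)\to\mathcal{Q}_i^M\to 0$, yields
$$
L_\ttt(\underline{Z}^M;\PP_\kk^M) \;=\; q_*\!\left(c_{t_1}(\mathcal{Q}_1^M)\cdots c_{t_m}(\mathcal{Q}_m^M)\smallfrown[\sX]\right),
$$
where $q:\sX\to\PP_\kk^M$ is the blow-up map. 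On the other hand, \autoref{rem_formula_L} and the existence of the mixed Segre zeta function (\autoref{lem_existence}) give
$$
L_\ttt(\underline{Z}^M;\PP_\kk^M) \;=\; c_\ttt(\underline{\EE}^M)\smallfrown\big([\PP_\kk^M]-\iota_{M*}(s(\underline{Z}^M;\PP_\kk^M))\big),
$$
which, using $c_\ttt(\underline{\EE}^M)=\prod_{i,j}(1+d_{i,j}Ht_i)$ together with the defining identity $Q(\ttt)=\prod_{i,j}(1+d_{i,j}t_i)\cdot(1-\zeta_{I_1,\ldots,I_m}(\ttt))$, simplifies in the Chow ring $A^*(\PP_\kk^M)\cong\ZZ[H]/(H^{M+1})$ to $\sum_I q_I H^{|I|}t^I\smallfrown[\PP_\kk^M]$, where $Q(\ttt)=\sum_I q_I t^I$.

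Applying \autoref{prop_Lorentzian} to $(\sX,q,\mathcal{Q}_1^M|_\sX,\ldots,\mathcal{Q}_m^M|_\sX)$, and equating the two expressions above, shows that the homogeneous polynomial of degree $M$
$$
\sum_{|I|\leq M} q_I\, t_0^{M-|I|}\, t^I \;=\; t_0^{M-d}\cdot Q^h(t_0,t_1,\ldots,t_m), \qquad d := \deg Q,
$$
is denormalized Lorentzian. To deduce that $Q^h$ itself is denormalized Lorentzian, I would invoke that partial derivatives preserve the Lorentzian property \cite{BrandenHuh}: applying $\partial_{t_0}^{M-d}$ to the normalization of $t_0^{M-d}Q^h$ yields precisely the normalization of $Q^h$, since the factorials arising from the derivative cancel against those introduced by normalization, so the normalization of $Q^h$ is Lorentzian.

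The main technical obstacle is the identification step tying $L_\ttt(\underline{Z}^M;\PP_\kk^M)$ to the coefficients $q_I$ of $Q$. It requires careful bookkeeping of $H$-degrees in $A^*(\PP_\kk^M)$ and uses that $M$ is chosen large enough that the truncation $H^{M+1}=0$ does not interfere with the identification of coefficients in the relevant range $|I|\leq d$.
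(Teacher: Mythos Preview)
Your proposal is correct and follows essentially the same route as the paper's proof. The paper works at level $n$ after first enlarging so that $n\ge m+\sum r_i$ (which forces $n\ge\deg Q$), then identifies $c_\ttt(\underline{\EE})\smallfrown\big([\PP_\kk^n]-\iota_*(s(\underline{Z};\PP_\kk^n))\big)$ with $b_*\big(\prod_i c_{t_i}(\mathcal{Q}_i)\smallfrown[\mathcal{B}]\big)$ directly via \autoref{thm_blow_up_form} and the Whitney sum formula, and concludes by \autoref{prop_Lorentzian}; your packaging through $L_\ttt$ and \autoref{rem_formula_L} is equivalent, and you are slightly more explicit than the paper in justifying the final step of stripping the factor $t_0^{M-d}$ by differentiation.
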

\begin{proof}
	We may assume $n \ge m+ \sum_{i=1}^{m}r_i$.
	By \autoref{lem_truncation}, we obtain
	\begin{align*}
	Q(t_1,\ldots,t_m) &\;=\;  \prod_{i,j}(1+d_{i,j}t_i) - P(t_1,\ldots,t_m)\\ 
	&\;=\;  T_n\left(\prod_{i,j}(1+d_{i,j}t_i)\,\left(1-\zeta_{I_1, \ldots, I_m}(t_1,\ldots,t_m)\right)\right)
	\end{align*}
	where $P(\ttt)={\rm T}_n\big(
	\prod_{i,j} \left(1+d_{i,j}\,t_i\right)	\zeta_{I_1, \ldots, I_m}(\ttt)
	\big)$ is the numerator of $\zeta_{I_1, \ldots, I_m}(t_1,\ldots,t_m)$ in \autoref{thm_rationality}.
	Write $Q(t_1,\ldots,t_m)=\sum_{i_1+\cdots+i_m\le n}a_{i_1,\ldots,i_m}t_1^{i_1}\cdots t_m^{i_m}$.
	Then notice that we get
	$$
	c_\ttt(\underline{\mathcal{E}}) \,\smallfrown\, \big(\left[\PP_\kk^n\right]-{\iota}_*\left(s\left(Z_1,\ldots,Z_m;\, \PP_\kk^n\right)\right)\big) \;=\; \sum_{i_1+\cdots+i_m\le n}a_{i_1,\ldots,i_m} \,t_1^{i_1}\cdots t_m^{i_m} \, H^{i_1+\cdots+i_m} \,\smallfrown\, \left[\PP_\kk^n\right];
	$$
	here we continue using the notation introduced in \autoref{sect_mixed_Seg_zeta}.
	Due to \autoref{thm_blow_up_form} and the projection and Whitney sum formulas, we obtain
	\begin{align*}
		c_\ttt(\underline{\mathcal{E}}) \,\smallfrown\, \big(\left[\PP_\kk^n\right]-{\iota}_*\left(s\left(Z_1,\ldots,Z_m;\, \PP_\kk^n\right)\right)\big) &\;=\; b_*\left( \frac{c_{t_1}(b^*(\EE_1))\cdots c_{t_m}(b^*(\EE_m))}{c_{t_1}(\OO_{\mathcal{B}}(-\ee_1))\cdots c_{t_m}(\OO_{\mathcal{B}}(-\ee_m))} \,\smallfrown\, \left[\mathcal{B}\right] \right) \\
		&\;=\; b_*\Big( c_{t_1}(\mathcal{Q}_1)\cdots c_{t_m}(\mathcal{Q}_m) \,\smallfrown\, \left[\mathcal{B}\right] \Big)
	\end{align*}
	where $\mathcal{Q}_i$ is equal to the quotient bundle $b^*(\EE_i)/\OO_\mathcal{B}(-\ee_i)$ on $\mathcal{B}$.
	Finally, since each $\mathcal{Q}_i$ is globally generated, the result of the theorem follows from \autoref{prop_Lorentzian}.
\end{proof}

\section{Examples and computations}
\label{sect_examples}

In this final section, we provide some examples and computations. 
In the first example, we give a general formula for the mixed Segre zeta function $\zeta_{I_1,I_2}(t_1,t_2)$ of two complete intersections $I_1$ and $I_2$ that involve disjoint sets of variables. 
In this case, the formula for the mixed Segre zeta function $\zeta_{I_1,I_2}(t_1,t_2)$ can be nicely expressed in terms of the Segre zeta functions of $I_1$ and $I_2$.

\begin{example}
	\label{examp_disjoint}
	Let $R = \kk[x_0,\ldots,x_n]$ be a polynomial ring over a field $\kk$ and $\PP_\kk^n = \Proj(R)$.
	Let $I_1=(f_1,\ldots,f_r)$ and $I_2 = (g_1,\ldots,g_s)$ be two homogeneous ideals of $R$.
	Let $a_i = \deg(f_i)$ and $b_i = \deg(g_i)$.
	Assume the following two conditions: 
	\begin{enumerate}[\rm (i)]
		\item $I_1$ and $I_2$ are complete intersections (i.e., $f_1,\ldots,f_r$ and $g_1,\ldots,g_s$ are regular sequences).
		\item There is an index $k$ such that $f_i \in \kk[x_0,\ldots,x_k]$ and $g_i \in \kk[x_{k+1},\ldots,x_n]$ (i.e., the generators of $I_1$ and $I_2$ involve disjoint sets of variables).
	\end{enumerate}
	Then the mixed Segre zeta function of $I_1$ and $I_2$ is given by 
	$$
	\zeta_{I_1, I_2}(t_1,t_2) \;=\;	\zeta_{I_1}(t_1)
		\;+\;	\zeta_{I_2}(t_2)
		\;-\;	
		\zeta_{I_1}(t_1)\zeta_{I_2}(t_2) \;\in\; \ZZ[\![t_1,t_2]\!],
	$$	
	where 
	$$
	\zeta_{I_1}(t_1) \;=\; \frac{a_1\cdots a_rt_1^r
	}{\prod_{i=1}^{r}\left(1+a_it_1\right)} \;\in\; \ZZ[\![t_1]\!] \quad\text{ and }\quad \zeta_{I_2}(t_2) \;=\; \frac{b_1\cdots b_s  t_2^s}{\prod_{i=1}^{s}\left(1+b_it_2\right)} \;\in\; \ZZ[\![t_2]\!]
	$$
	are the Segre zeta functions of $I_1$ and $I_2$, respectively.
\end{example}
\begin{proof}
	Let $Z_1 = V(I_1) \subset \PP_\kk^n$ and $Z_2 = V(I_2) \subset \PP_\kk^n$ be the closed subschemes of $\PP_\kk^n$ determined by $I_1$ and $I_2$, respectively. 
	Consider the joint blow-up $b : \mathcal{B} = \Bl_{Z_1,Z_2}(\PP_\kk^n) \rightarrow \PP_\kk^n$.
	Since the generators of $I_1$ and $I_2$ involve disjoint sets of variables, we obtain the following fibre square 
	\begin{equation*}
		\begin{tikzpicture}[baseline=(current  bounding  box.center)]
			\matrix (m) [matrix of math nodes,row sep=4em,column sep=7em,minimum width=2em, text height=1.5ex, text depth=0.25ex]
			{
				\mathcal{B}=\Bl_{Z_1,Z_2}(\PP_\kk^n) &  \mathcal{B}_2=\Bl_{Z_2}(\PP_\kk^n)\\
				\mathcal{B}_1=\Bl_{Z_1}(\PP_\kk^n) & \PP_\kk^n\\
			};
			\path[-stealth]
			(m-1-1) edge node [left] {$\pi_1$} (m-2-1)
			(m-1-2) edge node [right] {$b_2$} (m-2-2)
			(m-1-1) edge node [above] {$\pi_2$}(m-1-2)
			(m-2-1) edge node [above] {$b_1$}(m-2-2)
			;
		\end{tikzpicture}	
	\end{equation*}
	(in general, we only have a commutative diagram); see, e.g., \cite[Corollary 2.4]{AF}.
	Let $E_i$ be the exceptional divisor of the blow-up $\mathcal{B}_i=\Bl_{Z_i}(\PP_\kk^n)$.
	By an abuse of notation, the pull-back of $E_i$ to $\mathcal{B} = \Bl_{Z_1,Z_2}(\PP_\kk^n)$ is also denoted as $E_i$.
	Let $W = V(I_1+I_2) \subset \PP_\kk^n$ be the closed subscheme of $\PP_\kk^n$ determined by $I_1+I_2$ and consider the closed immersion $j_i : W \hookrightarrow Z_i$.
	Since $Z_i$ is a complete intersection,  we obtain the isomorphisms 
	$$
	b_i^{-1}(W) \;=\; \mathcal{B}_i \times_{\PP_\kk^n} W  \;\cong\; E_i \times_{Z_i} W \;\cong\; \PP\left(\mathcal{N}_{Z_i}\PP_\kk^n\right) \times_{Z_i} W \;\cong\;  \PP\left(j_i^*\left(\mathcal{N}_{Z_i}\PP_\kk^n\right)\right)
	$$
	where $\mathcal{N}_{Z_i}\PP_\kk^n$ is the normal bundle to $Z_i$ in $\PP_\kk^n$.
	Hence restricting the above fibre square to $W$ yields the new fibre square 
		\begin{equation*}
		\begin{tikzpicture}[baseline=(current  bounding  box.center)]
			\matrix (m) [matrix of math nodes,row sep=4em,column sep=7em,minimum width=2em, text height=1.5ex, text depth=0.25ex]
			{
				b^{-1}(W) &  b_2^{-1}(W) \;\cong\; \PP\left(j_2^*\left(\mathcal{N}_{Z_2}\PP_\kk^n\right)\right)\\
				b_1^{-1}(W) \;\cong\; \PP\left(j_1^*\left(\mathcal{N}_{Z_1}\PP_\kk^n\right)\right) & W.\\
			};
			\path[-stealth]
			(m-1-1) edge (m-2-1)
			(m-1-2) edge (m-2-2)
			(m-1-1) edge (m-1-2)
			(m-2-1) edge (m-2-2)
			;
		\end{tikzpicture}	
	\end{equation*}
	Consider the closed immersions $j_{Z_i} : Z_i \hookrightarrow \PP_\kk^n$ and $j_W  : W \hookrightarrow \PP_\kk^n$.
	Since $[W] = a_1\cdots a_r b_1 \cdots b_s H^{r+s}$, $j_1^*\left(\mathcal{N}_{Z_1}\PP_\kk^n\right) = \bigoplus_{i=1}^r\OO_W\left(a_i\,j_W^*H\right)$ and $j_2^*\left(\mathcal{N}_{Z_2}\PP_\kk^n\right) = \bigoplus_{i=1}^s\OO_W\left(b_i\,j_W^*H\right)$, we obtain that 
	\begin{align*}
		b_*\left(\sum_{i,j\ge 1}{(-E_1)}^{i}t_1^i{(-E_2)}^{j}t_2^j \, \smallfrown\, \left[\mathcal{B}\right]\right) &\;=\; t_1t_2\,b_*\left(\sum_{i,j\ge 0}{(-E_1)}^{i}t_1^i{(-E_2)}^{j}t_2^j \, \smallfrown\, \left[b^{-1}(W)\right]\right)\\
		&\;=\; {j_{W}}_*\Big(t_1t_2 \, t_1^{r-1}t_2^{s-1} \, s_{t_1}\!\left(j_1^*\left(\mathcal{N}_{Z_1}\PP_\kk^n\right)\right) \, s_{t_2}\!\left(j_2^*\left(\mathcal{N}_{Z_2}\PP_\kk^n\right)\right) \,\smallfrown\, \left[W\right]\Big) \\
		&\;=\; 	\frac{a_1\cdots a_r\, b_1 \cdots b_s H^{r+s}  t_1^{r}t_2^{s} }{\prod_{i=1}^{r}(1+a_iHt_1)\,\prod_{i=1}^{s}(1+b_iHt_2)};
	\end{align*}
	see \cite[Proposition 4.1(a)]{FULTON_INTERSECTION_THEORY}.
	Similarly, we have that 
	$$
	-b_*\left(\sum_{i\ge 1}{(-E_1)}^{i}t_1^i \, \smallfrown\, \left[\mathcal{B}\right]\right) \;=\; {j_{Z_1}}_* \big( t_1 \, t_1^{r-1}\, s_{t_1}\!\left(\mathcal{N}_{Z_1}\PP_\kk^n\right) \,\smallfrown\, \left[Z_1\right]\big) \;=\; \frac{a_1\cdots a_r H^{r}  t_1^{r} }{\prod_{i=1}^{r}(1+a_iHt_1)}
	$$
	and 
	$$
	-b_*\left(\sum_{i\ge 1}{(-E_2)}^{i}t_2^i \, \smallfrown\, \left[\mathcal{B}\right]\right) \;=\; {j_{Z_2}}_*\big(t_2 \, t_2^{s-1}\, s_{t_2}\!\left(\mathcal{N}_{Z_2}\PP_\kk^n\right) \,\smallfrown\, \left[Z_2\right]\big)  \;=\; \frac{b_1\cdots b_s H^{s}  t_2^{s} }{\prod_{i=1}^{s}(1+b_iHt_2)}.
	$$
	By combining the above computations with \autoref{rem_seg_fulton}, we get 
	\begin{align*}
		\iota_*\left(s(Z_1,Z_2; \PP_\kk^n)\right) &\;=\; -b_*\left(\sum_{i+j\ge 1}(-E_1)^it_1^i(-E_2)t_2^j \,\smallfrown\, [\mathcal{B}]\right)  \\
		&\;=\;	
		\frac{a_1\cdots a_r H^r t_1^r
		}{\prod_{i=1}^{r}\left(1+a_iHt_1\right)}
		+
		\frac{b_1\cdots b_s H^s t_2^s}{\prod_{i=1}^{s}\left(1+b_iHt_2\right)}
		-	
		\frac{a_1\cdots a_r b_1\cdots b_s H^{r+s} t_1^r t_2^s}{\prod_{i=1}^{r}\left(1+a_iHt_1\right)\prod_{i=1}^{s}\left(1+b_iHt_2\right)}.
	\end{align*}
	The result of the example now follows (see \autoref{lem_existence}).
\end{proof}

By utilizing \autoref{examp_disjoint} and \autoref{prop_mixed_formula}, we obtain a formula for the Segre zeta function $\zeta_I(t)$ of the product $I = I_1I_2 \subset R$ of the two ideals $I_1$ and $I_2$.
We explore this observation in the next three examples.

\begin{example}
	\label{examp_binom_conv}
	We continue with the same setting of \autoref{examp_disjoint}. 
	Consider the ideal $I = I_1I_2 \subset R$ given as the product of $I_1$ and $I_2$.
	Write $A(t) = \zeta_{I_1}(t) - 1 = \sum_{i\ge 0}\alpha_i t^i \in \ZZ[\![t]\!]$ and $B(t) = \zeta_{I_2}(t) - 1 = \sum_{i\ge 0}\beta_i t^i \in \ZZ[\![t]\!]$.
	By combining \autoref{prop_mixed_formula} and \autoref{examp_disjoint}, we obtain the following formula 
	$$
	\zeta_I(t) \;=\; 1 \,-\, A(t) \odot B(t)
	$$
	 for the Segre zeta function of $I$ in terms of the \emph{binomial convolution} 
	 $$
	 A(t) \odot B(t) \;:=\; \sum_{k=0}^{\infty}\left(\sum_{i=0}^{k}\binom{k}{i}\alpha_i\beta_{k-i}\right)t^k 
	 $$ 
	 of $A(t)$ and $B(t)$.
	 By \cite[Theorem 6]{GK}, we can write 
	 $$
	 A(t) \odot B(t) \;=\; \frac{L(t)}{\prod_{i=1}^r\prod_{j=1}^s(1+(a_i+b_j)t)}
	 $$
	 for some polynomial $L(t)$.
	 Therefore we can also write 
	 $$
	 \zeta_I(t) \;=\; \frac{P(t)}{\prod_{i=1}^r\prod_{j=1}^s(1+(a_i+b_j)t)}
	 $$
	 for some polynomial $P(t)$.
	 This agrees with the general presentation of \autoref{thm_rationality} because $I$ is generated by the homogeneous polynomials $f_ig_j$ of degree $\deg(f_ig_j) = a_i + b_j$.
	 Consider the power series 
	 $$
	 E_A(t) \;=\; \sum_{i\ge 0}\frac{\alpha_i}{i!} t^i \quad
	  \text{ and } 
	  \quad
	  E_B(t) \;=\;  \sum_{i\ge 0}\frac{\beta_i}{i!} t^i.
	  $$
	 Then the binomial convolution can be computed as the following integral 
	 $$
	 A(t) \odot B(t) \;=\; \int_{0}^{\infty} E_A(tx)E_B(tx) \,e^{-x} \,dx.
	 $$
	 We shall exploit this representation as an integral in the next, more explicit example.
\end{example}

\begin{example}
	\label{exmp_r_1_s_2}
	With the same setting of \autoref{examp_disjoint}, we assume that $I_1= (f_1)$, $I_2=(g_1,g_2)$ and $b_1 \neq b_2$.
	Consider the following power series with their partial fraction decompositions 
	$$
	A(t) \;=\; \zeta_{I_1}(t) - 1 \;=\; -\frac{1}{1+a_1t}
	$$
	and 
	$$
	B(t) \;=\; \zeta_{I_2}(t) -1 \;=\; \frac{b_2/(b_1 - b_2)}{(1+b_1 t)} + \frac{b_1/(b_2 - b_1)}{(1+b_2 t)}.
	$$
	Let $\delta_1 = b_1/(b_2 - b_1)$ and $\delta_2 = b_2/(b_1 - b_2)$.
	Notice that 
	$$
	E_A(t) = -e^{-a_1t} \quad \text{ and } \quad
	E_B(t) = \delta_2 e^{-b_1t} + \delta_1e^{-b_2t}.
	$$
	We can now compute 
	\begin{align*}
		\zeta_I(t) &\;=\; 1 \,-\, A(t) \odot B(t)\\
		&\;=\;1 + \int_{0}^{\infty} e^{-a_1tx}\left(\delta_2 e^{-b_1tx} + \delta_1e^{-b_2tx}\right) e^{-x} \,dx\\
		&\;=\; 1+\int_{0}^{\infty} \delta_2e^{-\left(1+(a_1+b_1)t\right)x} + \delta_1e^{-\left(1+(a_1+b_2)t\right)x} \,dx\\
		&\;=\; 1 + \frac{\delta_2}{1+(a_1+b_1)t} + \frac{\delta_1}{1+(a_1+b_2)t},
	\end{align*}		
	which yields an explicit formula for $\zeta_I(t)$.
\end{example}

\begin{example}
	\label{examp_explicit}
	From \autoref{exmp_r_1_s_2}, in the case $I_1 = (x_0^2)$ and $I_2=(x_1^2, x_2^3)$, the Segre zeta function of $I = I_1I_2 = (x_0^2x_1^2, x_0^2x_2^3)$ is given by 
	$$
	\zeta_I(t) \;=\; 1 - \frac{3}{1+4t} + \frac{2}{1+5t} \;=\; \frac{2t+20t^2}{(1+4t)(1+5t)}.
	$$
	One can check that this agrees with the output of the \texttt{Macaulay2} package \cite{SegreClassesSource}.
	Let $I = (x_0^2x_1^2, x_0^2x_2^3) \subset \kk[x_0,\ldots,x_{10}]$ and $Z \subset \PP_\kk^{10}$ be the closed subscheme determined by $I$.
	By calling the function \texttt{segre} we obtain the output 
	$$
		\iota_*\left(s(Z, \PP_\kk^{10})\right) \;=\; \left(
		\begin{array}{c}
			16\,385\,522\,H^{10}-3\,119\,818\,H^{9}+584\,642\,H^{8}  
			-107\,098\,H^{7} \\ +18\,962\,H^{6}-3\,178\,H^{5}+482\,H^{4}-58\,H^{3}+2\,H^{2}+2\,H
		\end{array}
		\right) \;\smallfrown\; \left[\PP_\kk^{10}\right].
	$$
	The coefficients of $\iota_*\left(s(Z, \PP_\kk^{10})\right)$ give the first 10 terms of $\zeta_I(t)=\frac{2t+20t^2}{(1+4t)(1+5t)}$.
	The numerator $1 - \zeta_I(t)$ is equal to $Q(t) = 1 + 7t$.
	The homogenization $t_0 + 7t_1$ of $Q(t)$ is a linear form with nonnegative coefficients, and thus a Lorentzian polynomial.
\end{example}

Finally, we present some computations with the computer algebra system \texttt{Macaulay2} \cite{MACAULAY2}.

\begin{remark}
	\label{rem_algorithm}
	Similarly to the \texttt{Macaulay2} package  for Segre classes \cite{SegreClassesSource}, it would be useful to have a robust implementation to compute mixed Segre classes. 
	By utilizing \autoref{cor_computing}, we implemented a \texttt{Macaulay2}  function that computes mixed Segre classes.
	The implementation is publicly available at: 
	\begin{center}
		\url{https://ycid.github.io/mixed_Segre.m2}
	\end{center}
	In this resource, the function \texttt{mixedSegre} can be called to compute mixed Segre classes. 
\end{remark}

\begin{example}
	With the same setting of \autoref{examp_disjoint}, we assume that $R = \kk[x_0,\ldots,x_5]$, $I_1 = (x_0^2, x_1^2)$ and $I_2=(x_2^2, x_3^3)$.
	Let $Z_1$ and $Z_2$ be the closed subschemes of $\PP_\kk^5$ determined by $I_1$ and $I_2$, respectively.
	By calling the function \texttt{mixedSegre} described in \autoref{rem_algorithm}, we obtain the following output
	$$	
	\iota_*\left(s(Z_1,Z_2;\PP_\kk^5)\right)  \;=\;
	\left(
	\begin{array}{c}
		-128\,H^{5}t_{1}^{5}+96\,H^{5}t_{1}^{3}t_{2}^{2}+120\,H^{5}t_{1}^{2}t_{2}^{3}-390\,H^{5}t_{2}^{5}+48\,H^{4}t_{1}^{4}\\-24\,H^{4}t_{1}^{2}t_{2}^{2}+114\,H^{4}t_{2}^{4}-16\,H^{3}t_{1}^{3}-30\,H^{3}t_{2}^{3}+4\,H^{2}t_{1}^{2}+6\,H^{2}t_{2}^{2}
	\end{array}
	\right)
	\smallfrown
	\left[\PP_\kk^5\right].
	$$
	in $A^*\left(\PP_\kk^5\right)
	[t_1,t_2]\cong \ZZ[H,t_1,t_2]/\left(H^6\right)$.
	As predicted by \autoref{examp_disjoint}, this coincides with the terms of 
	\begin{align*}
	\zeta_{I_1, I_2}(t_1,t_2) &\;=\; \frac{4t_1^2}{(1+2t_1)^2} + \frac{6t_2^2}{(1+2t_2)(1+3t_2)} -\frac{24t_1^2t_2^2}{(1+2t_1)^2(1+2t_2)(1+3t_2)} \\
	&\;=\; \frac{4\,t_1^{2}+6\,t_2^{2}+20\,t_1^{2}t_2+24\,t_1t_2^{2}+24\,t_1^{2}t_2^{2}}{(1+2t_1)^2(1+2t_2)(1+3t_2)}
	\end{align*}
	of total degree at most $5$.
	The numerator of $1 - \zeta_{I_1,I_2}(t_1,t_2)$ is equal to $Q(t_1,t_2) = 1 +4t_1 +5t_2 +  20t_1t_2$.
	It can be checked that the homogenization $t_0^2 +4t_0t_1 +5t_0t_2 +  20t_1t_2$ of $Q(t_1,t_2)$ is a Lorentzian polynomial.
\end{example}

\section*{Acknowledgments}

We are grateful to Matt Larson for several helpful conversations. 
We thank the reviewer for carefully reading our paper and for several comments and corrections.
The author was partially supported by NSF grant DMS-2502321.

\bibliography{references}

\end{document}